\definecolor{orangebis}{rgb}{0.99,0.25,0.00}
\definecolor{greenbis}{rgb}{0.10,0.85,0.10}
\definecolor{bluebis}{rgb}{0.10,0.30,0.99}
\title{}
\date{}
\theoremstyle{plain}
\newtheorem{thm}{Theorem}[section]
\newtheorem{prop}[thm]{Proposition}
\newtheorem{lem}[thm]{Lemma}
\newtheorem{claim}[thm]{Claim}
\newtheorem{theorem}[thm]{Theorem}
\newtheorem{corollary}[thm]{Corollary}
\newtheorem{proposition}[thm]{Proposition}
\newtheorem{lemma}[thm]{Lemma}
\theoremstyle{definition}
\newtheorem{defi}[thm]{Definition}
\newtheorem{assumption}[thm]{Assumption}
\newtheorem{remark}[thm]{Remark}
\theoremstyle{remark}
\def\rm{\reversemarginpar}
\newcommand{\N}{\mathbb{N}}
\newcommand{\R}{\mathbb{R}}
\newcommand{\Z}{\mathbb{Z}}
\newcommand{\Pro}{\mathbb{P}}
\newcommand{\E}{\mathbb{E}}
\newcommand{\cross}{\text{\textup{Cross}}}
\newcommand{\var}{\text{\textup{Var}}}
\newcommand{\arm}{\text{\textup{Arm}}}
\newcommand{\ann}{\text{\textup{Ann}}}
\newcommand{\multicross}{\text{\textup{MultiCross}}}
\renewcommand{\Z}{\mathbb{Z}}
\renewcommand{\N}{\mathbb{N}}
\newcommand{\un}{\mathds{1}}
\newcommand{\grandO}[1]{O\mathopen{}\left(#1\right)}
\renewcommand{\textbf}[1]{\begingroup\bfseries\mathversion{bold}#1\endgroup}
\def\calA{\mathcal{A}}
\def\calD{\mathcal{D}}
\def\var{\mathop{\mathrm{Var}}}
\def\dist{\mathrm{dist}}
\def\E{\mathbb{E}} 
\def \eps {\varepsilon}
\def\<#1{\langle #1\rangle}
\def\bi{\begin{itemize}}  
\def\ei{\end{itemize}}
\def\bnum{\begin{enumerate}} 
\def\enum{\end{enumerate}}
\newcommand \id{\mathbbm 1}
\numberwithin{equation}{section}
\let\oldtocsection=\tocsection
\let\oldtocsubsection=\tocsubsection
\let\oldtocsubsubsection=\tocsubsubsection
\renewcommand{\tocsection}[2]{\hspace{0em}\oldtocsection{#1}{#2}}
\renewcommand{\tocsubsection}[2]{\hspace{1em}\oldtocsubsection{#1}{#2}}
\renewcommand{\tocsubsubsection}[2]{\hspace{2em}\oldtocsubsubsection{#1}{#2}}
\begin{document}

\title[The sharp phase transition for smooth planar Gaussian fields]{The sharp phase transition for level set percolation\\ of smooth planar Gaussian fields}
\author{Stephen Muirhead}
\address{Department of Mathematics, King's College London. Current address: School of Mathematical Sciences, Queen Mary University of London}
\email{s.muirhead@qmul.ac.uk}
\author{Hugo Vanneuville}
\address{Univ. Lyon 1, UMR5208, Institut Camille Jordan, 69100 Villeurbanne, France}
\email{vanneuville@math.univ-lyon1.fr}
\subjclass[2010]{60G60; 60K35}
\keywords{Gaussian fields, percolation, sharp phase transition}
\date{\today}
\thanks{The first author was supported by the Engineering and Physical Sciences Research Council (EPSRC) Grant EP/N009436/1 ``The many faces of random characteristic polynomials''. The second author was supported by the ERC grant Liko No. 676999.}

\begin{abstract}
We prove that the connectivity of the level sets of a wide class of smooth centred planar Gaussian fields exhibits a phase transition at the zero level that is analogous to the phase transition in Bernoulli percolation. In addition to symmetry, positivity and regularity conditions, we assume only that correlations decay polynomially with exponent larger than two -- roughly equivalent to the integrability of the covariance kernel -- whereas previously the phase transition was only known in the case of the Bargmann-Fock covariance kernel which decays super-exponentially. We also prove that the phase transition is \textit{sharp}, demonstrating, without any further assumption on the decay of correlations, that in the sub-critical regime crossing probabilities decay exponentially.

Key to our methods is the white-noise representation of a Gaussian field; we use this on the one hand to prove new quasi-independence results, inspired by the notion of influence from Boolean functions, and on the other hand to establish sharp thresholds via the OSSS inequality for i.i.d.\ random variables, following the recent approach of Duminil-Copin, Raoufi and Tassion.\end{abstract}

\maketitle

\tableofcontents

\section{Introduction}

In recent years the strong links between the geometry of smooth planar Gaussian fields and percolation have become increasingly apparent, and it is now believed that the connectivity of the level sets of a wide class of smooth, stationary planar Gaussian fields exhibits a sharp phase transition that is analogous to the phase transition in, for instance, Bernoulli percolation. 

\medskip
To discuss these links more precisely, let us fix notation. Let $f$ be a stationary, centred, continuous Gaussian field on $\mathbb{R}^2$
with covariance kernel 
\[   \kappa(x) = \mathbb{E}[  f(0) f(x)  ]  \ , \quad x \in \mathbb{R}^2 .\]
The level sets and (upper-)excursion sets of $f$ will be denoted
\[  \mathcal{L}_\ell = \{ x :  f(x) = -\ell\} \quad \text{and}  \quad \mathcal{E}_\ell = \{ x :  f(x) \geq -\ell\}  \ , \quad \ell \in \mathbb{R} ; \]
the use of $-\ell$ instead of $\ell$ in these definitions is solely for convenience -- in particular~$\mathcal{E}_\ell$ is then increasing in both $f$ and $\ell$ -- and makes no difference to the content of our results. 

\medskip

In percolation theory, one is interested in the geometry of macroscopic components in random sets. 
A question of major interest is the existence of an unbounded connected component (when such a component exists, one says that the random set \textit{percolates}). By analogy with other planar percolation models, it is natural to expect that the excursion sets of planar Gaussian fields exhibit a phase transition at the critical level $\ell_c=0$ (since $f$ is centred, $\ell = 0$ is the `self-dual' point). More precisely, if $f$ is ergodic one expects the following phase transition at criticality:

\begin{itemize}
\item If $\ell \leq 0$, then almost surely the connected components of $\mathcal{E}_{\ell}$ are bounded;
\item If $\ell > 0$, then almost surely $\mathcal{E}_\ell$ has a unique unbounded connected component.
\end{itemize}

A rough analogy is that of water flooding the infinite landscape formed by the graph of $f$: if $\ell < 0$ then the landscape consists of an infinite landmass that contains lakes, whereas if $\ell > 0$ then instead it consists of islands surrounded by an infinite ocean. See Figure \ref{f.perc} for a simulation of the excursion sets of a stationary planar Gaussian field at (i) the zero level, and (ii) a level slightly above zero, illustrating the dramatic change in the connectivity.

\begin{figure}[h!]
\centering
\begin{subfigure}[t]{0.45\textwidth}
\resizebox{\linewidth}{!}{
\includegraphics[scale=0.6]{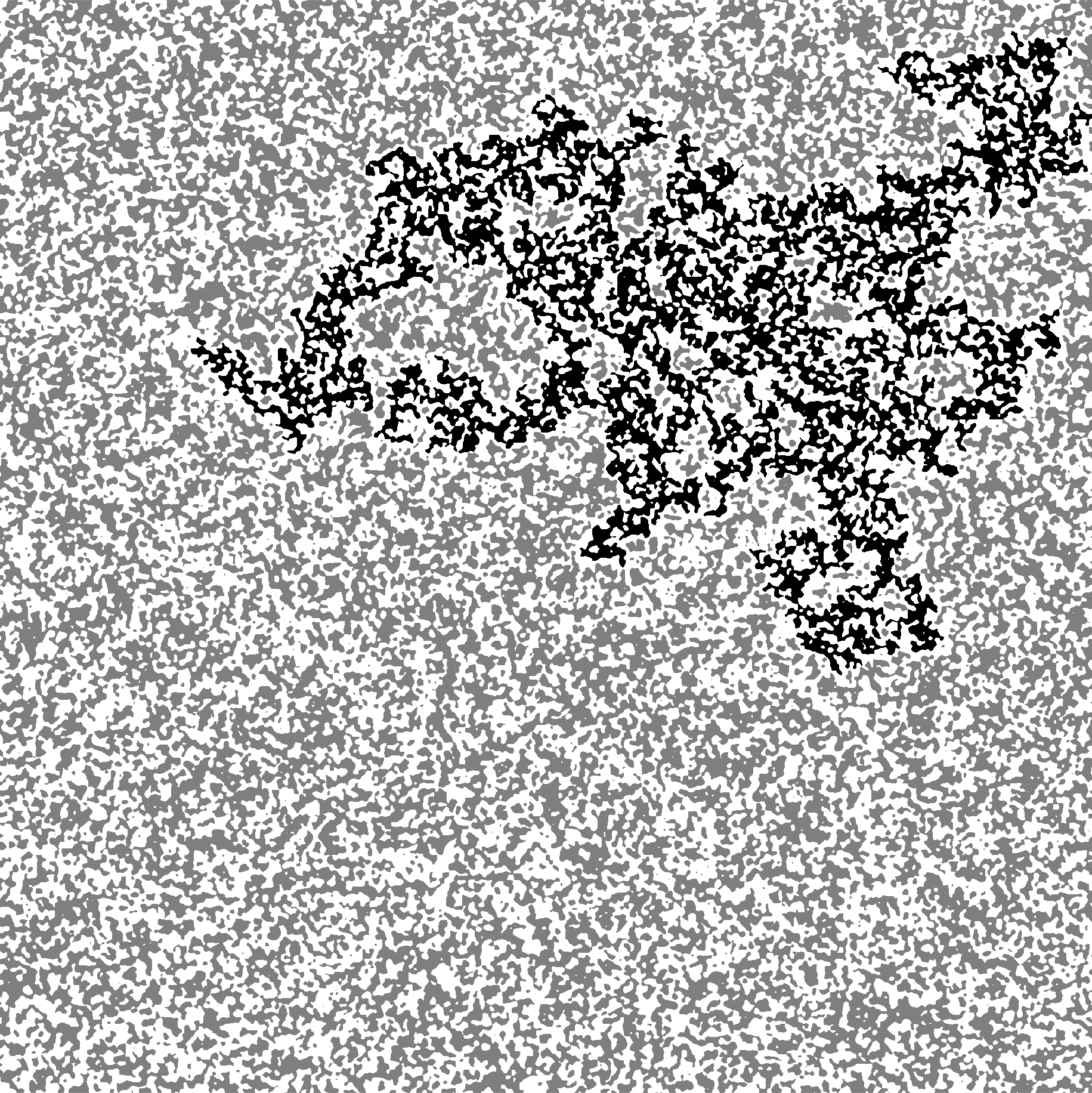}}
\end{subfigure}
\hspace{0.05cm}
\begin{subfigure}[t]{0.45\textwidth}
\resizebox{\linewidth}{!}{
\includegraphics[scale=0.6]{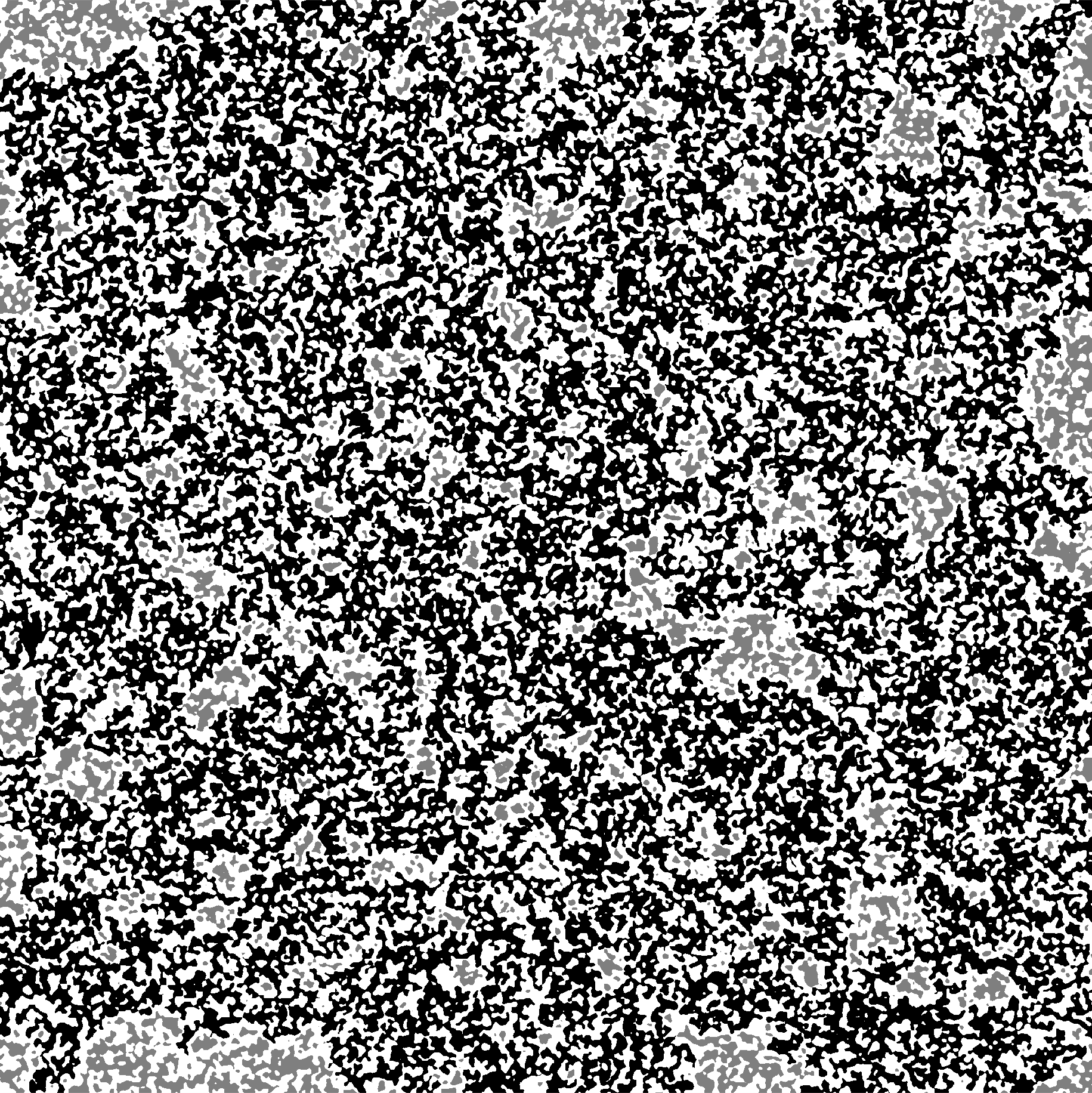}}
\end{subfigure}
\caption{A simulation of the excursion set $\mathcal{E}_\ell$ of the \textit{Bargmann-Fock} field restricted to a large square (in grey) at (i) the zero level $\ell = 0$ (left figure), at (ii) the level $\ell = 0.1$ (right figure), with the connected component of greatest area distinguished (in black). The Bargmann-Fock field is the stationary, centred Gaussian field with covariance kernel $\kappa(x) = e^{-|x|^2/2}$.  Credit: Dmitry Beliaev.}\label{f.perc}
\end{figure}

The primary aim of this paper is to establish, under mild conditions on $f$, the existence of such a phase transition at the zero level. This is the analogue, for smooth planar Gaussian fields, of the celebrated result of Kesten \cite{kesten1980critical} establishing the phase transition for (Bernoulli) bond percolation on the square lattice.

\medskip
We further establish a quantitative description of the phase transition, demonstrating that it is \textit{sharp}. More precisely, if $\ell < 0$ we show that crossing probabilities decay exponentially fast on large scales, and we also show that the `near-critical window' of levels $\ell$ for which crossing probabilities are bounded decays polynomially in the scale.

\subsection{Gaussian fields and percolation}
\label{s.litrev}
Early works to consider rigorously the connections between the geometry of planar Gaussian fields and percolation focused mainly on (i) the zero level $\ell = 0$, and (ii) very high levels $\ell \gg 1$. To the best of our knowledge the first such works were \cite{molchanov1983percolationi, molchanov1983percolationii,molchanov1986percolationiii}, in which it was shown that if $\kappa$ is sufficiently smooth and absolutely integrable then there exists a $\ell^\ast \in \mathbb{R}$ such that almost surely there is an unbounded component in $\mathcal{E}_\ell$ at every level $\ell \ge \ell^\ast$. Later it was shown \cite{alex_96}, using very different techniques, that under the assumptions of ergodicity (implied by the absolute integrability of~$\kappa$) and positive correlations (i.e.\ $\kappa \ge 0$) the level sets never percolate, i.e.\ almost surely there is no unbounded component in $\mathcal{L}_\ell$ for any $\ell \in \mathbb{R}$.

\medskip
Given these results, and by analogy with Bernoulli percolation (see below), it was natural to expect that under mild conditions (for instance if $\kappa$ is positive, integrable and sufficiently smooth) the connectivity of the level sets undergoes a phase transition at the zero level as described above. In \cite{rv_bf} this conjecture was established for the \textit{Bargmann-Fock} field, whose covariance kernel is a Gaussian function and in particular decays extremely rapidly. In this work the exact form of the covariance kernel was crucial, since the proof required explicit Fourier-type calculations to be performed.\footnote{More precisely, in~\cite{rv_bf} the phase transition at the zero level was established for the continuous Bargmann-Fock field, as well as for discrete, stationary, positively correlated Gaussian fields whose covariance decays polynomially with exponent $\beta > 4$.} As mentioned above, one of the main results of the present paper is to establish the conjecture under mild conditions on the covariance kernel; indeed we use only slightly stronger conditions than those mentioned above (to be precise, we require a `strong' positivity assumption, sufficiently symmetry, polynomial decay of correlations with exponent $\beta > 2$, and sufficient smoothness).

\medskip
In classical percolation theory the phase transition is often described in a more quantitative manner. For example, if the percolation model is critical then so-called \textit{Russo-Seymour-Welsh estimates} (RSW) hold, which are bounds on the probability that a domain is crossed that are uniform in the scale of the domain. On the other hand, if the model is sub-critical then crossing probabilities decay exponentially in the scale of the domain -- this is often referred to as the \textit{sharpness} of the phase transition.

\medskip
Recently, such statements have been proven also for the level sets and excursion sets of smooth planar Gaussian fields. The pioneering work was \cite{bg_16} in which it was shown that, assuming correlations decay polynomially with exponent at least $\beta \approx 325$, both $\mathcal{L}_0$ and $\mathcal{E}_0$ satisfy equivalents of the RSW estimates. Although the necessary decay assumptions on $\kappa$ have been progressively weakened \cite{bm_17, bmw_17, rv_rsw}, the state of the art still requires correlations to decay polynomially with exponent $\beta > 4$, much faster than that implied by the mere integrability of the covariance kernel (which corresponds roughly to $\beta > 2$). For sub-critical levels~$\ell < 0$,~\cite{rv_bf} established the exponential decay of crossing probabilities in the special case of the Bargmann-Fock field.

\medskip
A secondary aim of this paper is to establish quantitative descriptions of the phase transition. In particular, working under the same mild conditions as mentioned above (in particular, under the assumption $\beta > 2$), we show that if $\ell < 0$ then domains are crossed by $\mathcal{E}_\ell$ with probability decaying exponentially in the scale of the domain, whereas if $\ell = 0$ then RSW estimates hold. We remark that, although our results are inspired by the works mentioned above, our methods are completely independent and quite distinct (except for the use of Tassion's method~\cite{tassion2014crossing}); see Section~\ref{s.overview} for an overview of our methods, and how they relate to previous work.

\subsection{Statement of the main results}
\label{s.mr}

Recall that $f$ denotes a stationary, centred continuous planar Gaussian field with covariance kernel $\kappa$ (we exclude the degenerate case $f \equiv 0$). To state the additional assumptions that we impose on $f$ we introduce the \textit{spectral measure} $\mu$, defined as the Fourier transform of the covariance kernel $\kappa$:
\[   \kappa(x) =  \int e^{2 \pi i \langle x , s \rangle} \, d \mu(s)  ; \]
since $f$ is continuous, such a measure exists by Bochner's theorem, and satisfies $\mu(-A) = \mu(A)$, for all Borel sets $A$, and $\int d\mu = \kappa(0)$. 

\medskip 
Henceforth we shall always work under the assumption that $\mu$ is absolutely continuous with respect to the Lebesgue measure; we denote by $\rho^2$ the density of $\mu$, and refer to this as the \textit{spectral density}. Note that $\rho \in L^2(\mathbb{R}^2)$ since $\|\rho\|_{L^2} = \int d \mu = \kappa(0) \in (0,\infty)$.

\medskip
The existence of the spectral density $\rho^2$ guarantees that $f$ is ergodic \cite[Appendix B]{nazarov2015asymptotic}, and also that $\kappa(x) \to 0$ as $|x| \to \infty$ (by the Riemann-Lebesgue lemma). On the other hand, in the context of previous results this assumption is not so strong, being for instance weaker than the condition that the covariance kernel $\kappa$ is absolutely integrable (and so also weaker than the condition that correlations decay polynomially with exponent $\beta > 2$), a key assumption in previous works.

\medskip
The existence of the spectral density is fundamental for our analysis because it is equivalent to the existence of the \textit{white-noise representation} of $f$, i.e.\ the fact that
\begin{equation}
\label{e.wnr}
  f \stackrel{d}{=} q \star W 
  \end{equation}
for $q \in L^2(\mathbb{R}^2)$ satisfying $q(-x) = q(x)$, where $\star$ denotes convolution and $W$ is a planar white-noise; we give more details on this representation in Sections \ref{s.overview} and \ref{s.cond} below. To relate~\eqref{e.wnr} to the existence of the spectral density $\rho^2$, note that we can define
\begin{equation}
\label{e.q}
 q = \mathcal{F}[\rho]
 \end{equation}
where $\mathcal{F}[\cdot]$ denotes the Fourier transform; it is simple to check that $q$ possesses the required properties, namely that $q(-x) =  \mathcal{F}[\rho](-x) =  \mathcal{F}[\rho](x) = q(x)$, and $\|q\|_{L^2} = \|\rho\|_{L^2} = \| \rho^2 \|_{L^1}$.

\medskip

Conversely, if $q(-x)=q(x)$ and $q \in L^2$, then we can define $f=q \star W$ and $\rho = \mathcal{F}[q]$, which ensures that $f$ is a stationary, centred planar Gaussian field with spectral density $\rho^2$ and covariance kernel\footnote{Indeed, we use the following definition of the planar white noise: $W$ is a centred Gaussian field indexed by $L^2(\R^2)$ such that $\E \left[ \int q_1(y) dW(y) \int q_2(y) dW(y) \right] = \int q_1(y) q_2(y) dy$.}
 \[  \kappa = \mathcal{F}[\rho^2] = \mathcal{F}[\rho \cdot \rho ] = \mathcal{F}[\rho] \star  \mathcal{F}[\rho]  = q \star q. \]

\medskip 
Henceforth it will be convenient to work with \eqref{e.wnr} as the \textit{definition} of $f$ and with $\rho = \mathcal{F}[q]$ as the definition of $\rho$. To ensure $f$ enjoys some additional properties, we need to impose certain regularity conditions on $q$, which we take to hold throughout the paper:

\begin{assumption}[Regularity] \label{a.reg}
The function $q$ is in $L^2$, and for every $x \in \R^2$, $q(-x) = q(x)$. Moreover, $q$ is $C^3$ and there exist $\eps,c > 0$ such that, for every multi-index~$\alpha$ such that $|\alpha| \leq 3$, $|\partial^\alpha q(x)| \leq c |x|^{-(1+\eps)}$. Finally, the support of $\rho = F[q]$ contains an open set.
\end{assumption}

We collect important consequences of Assumption \ref{a.reg} in~Section~\ref{s.cond}. Here we simply mention that this assumption ensures (by dominated convergence) that $\kappa=q \star q$ is $C^6$ which permits us to define $f$ as a continuous (in fact $C^2$) modification of $q \star W$, rather than $q \star W$ itself, and also guarantees that, for each $\ell \in \mathbb{R}$, the level set $\mathcal{L}_\ell$ almost surely consists of a collection of simple curves.

\begin{remark}
In addition to $f$, we shall also consider in this paper \textit{uncountable} families of Gaussian fields $f_r$, $f_r^\eps$ and $f^\eps$, indexed by $r \geq 1$ and $\eps > 0$, constructed using the white-noise~$W$. Although the existence of \textit{simultaneous} modifications of these fields such that they are \textit{all} continuous is not obvious, this is not essential for us since we will only ever consider either: i) fixed parameters $r$ and $\eps$; or ii) limits as $r \to \infty$ or $\eps \to 0$ in order to deduce a result for $f$, for which we can always work with countable subsequences.
\end{remark}

For our main results to hold, we shall need some or all of the following additional assumptions on $q$:

\begin{assumption}[$D_4$ Symmetry]  \label{a.sym}
The function $q$ is symmetric under both (i) reflection in the $x$-axis, and (ii) rotation by $\pi/2$ about the origin.
\end{assumption}

\begin{assumption}[Weak or strong positivity] \label{a.swp}  \ 
\begin{enumerate}
\item (Weak positivity) $\kappa = q \star q \ge 0$;
\item (Strong positivity) $q \ge 0$.
\end{enumerate}
\end{assumption}

\begin{assumption}[Polynomial decay of correlations; depends on a parameter $\beta>0$]  \label{a.dec} 
There exists a constant $c > 0$ such that, for every $|x| > 1$ and multi-index $\alpha$ such that $|\alpha| \le 1$,
\begin{equation} \label{e.beta}
   | \partial^\alpha q(x)  |  < c |x|^{- \beta }  .
   \end{equation}
\end{assumption}

\medskip 
We emphasise that the decay condition \eqref{e.beta} is a slightly stronger version of the assumption, appearing in previous works, that $\kappa(x) = O(|x|^{-\beta})$. Moreover, $q(x) = O(|x|^{-\beta})$ is equivalent to $\kappa(x) = O(|x|^{-\beta})$ for $\beta > 2$ if $q \ge 0$ is also assumed. Observe also that Assumption~\ref{a.reg} implies that~\eqref{e.beta} holds for some $\beta > 1$, and on the other hand, if~\eqref{e.beta} holds for $\beta > 2$ and $q$ is not identically equal to $0$, then the support of $\rho = F[q]$ contains an open set since it is continuous.

\medskip
Although we have chosen to state Assumptions \ref{a.sym} and \ref{a.swp} in terms of $q$, they have natural analogues for the spectral measure $\mu$. First, the weak positivity condition in Assumption~\ref{a.swp} is equivalent to the spectral density $\rho^2$ being positive-definite, whereas strong positivity is equivalent to $\rho$ being positive-definite. Second, Assumption \ref{a.sym} is equivalent to any of $\rho$, $\mu$, $\kappa$ or the law of $f$ satisfying the same symmetries. We remark also that sufficient conditions for Assumptions~\ref{a.reg} and~\ref{a.dec} could be given in terms of the spectral density $\rho^2$ using classical results from Fourier analysis.

\medskip
We are now ready to state our first theorem, establishing the phase transition at the zero level under the above conditions:

\begin{theorem}[The phase transition at the zero level]
\label{t.main1}
Suppose that Assumptions \ref{a.reg} and \ref{a.sym} hold, that the strong positivity condition in Assumption~\ref{a.swp} holds, and also that Assumption \ref{a.dec} holds for a given $\beta > 2$. Then the following are true:
\begin{itemize}
\item If $\ell \leq 0$, then almost surely the connected components of $\mathcal{E}_{\ell}$ are bounded;
\item If $\ell > 0$, then almost surely $\mathcal{E}_\ell$ has a unique unbounded connected component.
\end{itemize}
\end{theorem}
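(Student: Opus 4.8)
The plan is to derive Theorem~\ref{t.main1} from two quantitative facts established in the body of the paper. The first is a set of uniform \emph{RSW box-crossing estimates at the zero level}: constants $0<c<1$ such that for all $R\ge 1$ the probability that $\mathcal{E}_0$ crosses a $3R\times R$ rectangle in the long direction lies in $[c,1-c]$. The lower bound is obtained by running Tassion's method \cite{tassion2014crossing} on top of the quasi-independence statements furnished by the white-noise representation and the positive association of $f$ (valid since $\kappa=q\star q\ge 0$); the upper bound follows from the lower one by planar duality together with the symmetry $f\distr-f$, using that under Assumption~\ref{a.reg} the level line $\mathcal{L}_0$ is a.s.\ a union of simple curves and hence negligible for connectivity. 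The second fact is \emph{sharpness in the level}, proved via the OSSS inequality following Duminil-Copin, Raoufi and Tassion: there is a critical level $\ell_c\in\R$ such that for $\ell<\ell_c$ the probability that $\mathcal{E}_\ell$ crosses a box of side $R$ decays faster than any polynomial in $R$ (exponentially under strong exponential decay), while for $\ell>\ell_c$ this probability tends to $1$. The two inputs together force $\ell_c=0$: were $\ell_c>0$, taking $\ell=0<\ell_c$ would contradict the RSW lower bound, and were $\ell_c<0$, taking $\ell=0>\ell_c$ would contradict the RSW upper bound.

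\emph{The subcritical side $\ell\le 0$.} For $\ell<0$ we are subcritical, so $\sum_n\P\bigl(\mathcal{E}_\ell \text{ crosses } \ann(2^n,2^{n+1})\bigr)<\infty$, since an annulus crossing entails the crossing of an $O(1)$-aspect-ratio rectangle of scale $\asymp 2^n$. The Borel--Cantelli lemma gives that a.s.\ only finitely many of these annuli are crossed; because $f$ is $C^2$ and $\mathcal{L}_\ell$ is a.s.\ a union of simple curves, an uncrossed annulus contains a circuit inside $\{f<-\ell\}$ surrounding the hole, and such a circuit prevents any component of $\mathcal{E}_\ell$ from joining its inside to its outside. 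Hence a.s.\ the component of $\mathcal{E}_\ell$ through the origin is enclosed by such a circuit, so it is bounded; by stationarity this holds for every point of a countable dense set, and a short topological argument (again using that $\mathcal{L}_\ell$ is a.s.\ a union of simple curves) upgrades this to all components. For $\ell=0$ the crossing probabilities no longer decay, but the quasi-independence results let us decouple the ``circuit in $\ann(2^n,2^{n+1})$'' events across well-separated dyadic scales; by the RSW lower bound applied to $-f\distr f$, together with positive association, such a circuit in $\{f<0\}$ occurs with probability $\ge c'>0$ at each dyadic scale, so a reverse Borel--Cantelli argument (valid here thanks to the quasi-independence) produces infinitely many enclosing circuits a.s., and again every component of $\mathcal{E}_0$ is bounded. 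This proves the first bullet.

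\emph{The supercritical side $\ell>0$.} Here we are supercritical. By planar duality, $1-\P\bigl(\mathcal{E}_\ell \text{ crosses } [0,3R]\times[0,R] \text{ the long way}\bigr)=\P\bigl(\{f<-\ell\}\text{ crosses it the short way}\bigr)$, and $\{f<-\ell\}\distr\mathcal{E}_{-\ell}$ with $-\ell<0=\ell_c$ is subcritical, so the right-hand side decays faster than any polynomial; thus $1$ minus the long-crossing probability is summable over dyadic scales. A standard gluing construction (nested circuits in $\ann(2^n,2^{n+1})$ joined by crossings of the intervening annuli), combined with positive association, this summability, and Borel--Cantelli, produces an unbounded component of $\mathcal{E}_\ell$ a.s. Since $f$ is ergodic, the number of unbounded components is a.s.\ a constant in $\{1,\infty\}$, and to exclude $\infty$ we run the Burton--Keane trifurcation argument (alternatively, the classical two-dimensional duality argument applies, as the dual excursion set $\{f<-\ell\}$ is subcritical). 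Its only non-classical ingredient is a finite-energy/insertion-tolerance property, which we obtain from the white-noise representation: conditionally on $W$ restricted to the complement of a ball $B'$, the restriction of $f$ to a slightly smaller ball $B$ is a Gaussian field which is non-degenerate (thanks to the open-support hypothesis on $\rho$ in Assumption~\ref{a.reg}) and has a density, so with positive conditional probability $f>-\ell$ on all of $B$, which suffices to glue clusters. Uniqueness follows, completing the proof.

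\emph{Main obstacle.} The substance of the argument lies in the two inputs above, and chiefly in the RSW estimates at the zero level: in contrast with Bernoulli percolation there is no exact independence, so Tassion's argument must be fed the quasi-independence estimates coming from the white-noise representation, and these must be robust enough to handle the continuum, the mere $\beta>2$ decay, and the $D_4$ symmetry; establishing sharpness via OSSS demands similarly delicate revealment bounds. Within the deduction itself, the two genuinely continuum points --- the quasi-independence needed at $\ell=0$ to manufacture infinitely many blocking circuits, and the insertion-tolerance underpinning the uniqueness argument --- are precisely where the white-noise representation does the work.
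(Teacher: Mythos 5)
Your proposal is correct and rests on the same two pillars as the paper: RSW estimates at level zero obtained via Tassion's method fed by the white-noise quasi-independence, and sharpness in the level via OSSS applied to the truncated/discretised field. A couple of your routes into the final deduction differ slightly from the paper's, though they are morally equivalent. For the bullet $\ell\le 0$, the paper handles both $\ell<0$ and $\ell=0$ in a single stroke by quoting the one-arm decay $\mathbb{P}(f\in\arm_0(1,R))\to 0$ from the first part of Theorem~\ref{t.main2} (which is monotone in $\ell$), whereas you split into $\ell<0$ (fast decay of crossings) and $\ell=0$ (reverse Borel--Cantelli for blocking circuits in $\{f<0\}$ at quasi-independent dyadic scales); your $\ell=0$ argument is essentially an inline re-derivation of the one-arm estimate, so both work, but the paper's route is more economical given that the one-arm bound is already on the table. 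For uniqueness of the unbounded cluster when $\ell>0$, the paper cites the planar gluing argument of \cite[Lemma~2.8]{rv_bf} (built around nested circuits supplied by the very same Borel--Cantelli step that gives existence), while you propose Burton--Keane with a finite-energy/insertion-tolerance input coming from the non-degeneracy of $f$; both are valid here, and you also flag the duality route as an alternative. There is no genuine gap, just these two presentational variants.
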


Since $f = -f$ in law, the same result holds for $\mathcal{E}_\ell^c$, the complement of the excursion set. We can also state a version of the result for `thickenings' of the zero level set, i.e.\ $\mathcal{L}_0^\eps = \{ x : |f(x)| \le \varepsilon \}$:

\begin{theorem}[The phase transition for thickened zero level sets]
\label{t.main1a}
Under the same conditions as in Theorem \ref{t.main1}: \begin{itemize}
\item If $\eps =  0$, then almost surely the connected components of $\mathcal{L}^{\eps}_0$ are bounded;
\item If $\eps > 0$, then almost surely $\mathcal{L}^\eps_0$ has a unique unbounded connected component.
\end{itemize}
\end{theorem}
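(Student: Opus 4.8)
The plan is to reduce both statements to Theorem~\ref{t.main1}. The case $\eps = 0$ is immediate: $\mathcal{L}^0_0 = \{f=0\}\subseteq\{f\ge 0\}=\mathcal{E}_0$, and a connected subset of $\mathcal{E}_0$ lies in a single connected component of $\mathcal{E}_0$, so Theorem~\ref{t.main1} at $\ell=0$ shows the components of $\mathcal{L}^0_0$ are a.s.\ bounded. Fix $\eps > 0$ from now on. The starting point is the three-colour decomposition of the plane by the sign of $|f|-\eps$: writing $\mathcal{L}^\eps_0 = \mathcal{E}_\eps \cap \{f\le\eps\}$,
\[ \R^2 \setminus \mathcal{L}^\eps_0 \;=\; \{f > \eps\}\ \sqcup\ \{f < -\eps\}, \]
where the two open sets on the right are \emph{separated}: by continuity of $f$ (and the intermediate value theorem) any connected subset of $\R^2\setminus\mathcal{L}^\eps_0$ is contained in one of them. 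Now $\{f\ge\eps\}$ and $\{f\le-\eps\}$ are the closed excursion sets at level $-\eps$ of $f$ and of $-f$; since $-\eps<0$ and $f\distr -f$, Theorem~\ref{t.main1}, together with the decay of sub-critical crossing probabilities produced by its proof and sharpened in our quantitative results, gives, for a rectangle $R$ of fixed aspect ratio and scale $n$,
\[ \mathbb{P}\big[\{f\ge\eps\}\text{ crosses }R\big] = \mathbb{P}\big[\{f\le-\eps\}\text{ crosses }R\big] \longrightarrow 0 \qquad(n\to\infty). \]

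\emph{Existence.} Let $R$ be such a rectangle and consider the event that $\mathcal{L}^\eps_0$ crosses $R$ the long way. By planar duality — legitimate here because, by Assumption~\ref{a.reg}, the level sets $\{f=\pm\eps\}$ a.s.\ consist of simple curves, so that $\mathcal{L}^\eps_0$ and its complement are locally finite unions of regions bounded by simple arcs — the failure of this event forces a short-way crossing of $R$ by $\R^2\setminus\mathcal{L}^\eps_0$, which by the separation property lies entirely in $\{f>\eps\}$ or entirely in $\{f<-\eps\}$. Hence
\[ \mathbb{P}\big[\mathcal{L}^\eps_0\text{ crosses }R\big] \;\ge\; 1 - 2\,\mathbb{P}\big[\{f\le-\eps\}\text{ crosses }R\big] \longrightarrow 1 \qquad(n\to\infty), \]
and the same holds for rectangles of any fixed aspect ratio. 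A standard construction then yields an unbounded component: combining crossings of the $O(1)$ rectangles that tile each dyadic annulus $B_{2^{k+1}}\setminus B_{2^k}$ produces, with probability $\to 1$, both a circuit of $\mathcal{L}^\eps_0$ around the origin in that annulus and a radial crossing connecting it to the circuit in the next annulus; the faster-than-polynomial decay above makes the complementary probabilities summable, so by Borel--Cantelli a.s.\ such circuits and connections exist in all but finitely many annuli, and chaining them produces an unbounded connected subset of $\mathcal{L}^\eps_0$.

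\emph{Uniqueness.} As $f$ is ergodic under translations, so is the translation-equivariant random set $\mathcal{L}^\eps_0$; hence the number $N\in\{0,1,2,\dots,\infty\}$ of its unbounded components is a.s.\ constant, with $N\ge1$ by the existence step. One excludes $N\ge2$ (and $N=\infty$) by the Burton--Keane trifurcation argument, exactly as for $\mathcal{E}_\ell$ in Theorem~\ref{t.main1}: the requisite finite-energy-type input is that, conditionally on the field outside a box $Q$, the event $\{|f|\le\eps\text{ throughout }Q\}$ has positive probability (by non-degeneracy of $f$), which lets one create trifurcation points and contradict the linear bound on their number. Thus $N=1$.

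\emph{Main obstacle.} The probabilistic input is entirely inherited from Theorem~\ref{t.main1} and its sub-critical crossing estimates, so the only genuinely delicate point is the topological one: making the planar-duality step (``$\mathcal{L}^\eps_0$ fails to cross $\Rightarrow$ its complement crosses'') and the circuit-gluing construction rigorous for the two-dimensional set $\mathcal{L}^\eps_0$ rather than for a lattice model, and verifying measurability of the crossing and circuit events. This is handled by the a.s.\ regularity of $\{f=\pm\eps\}$ from Assumption~\ref{a.reg} together with the separation of $\{f>\eps\}$ and $\{f<-\eps\}$; once these are set up, the rest is routine.
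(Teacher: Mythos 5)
Your treatment of the $\eps=0$ case and of the crossing-probability estimate for $\eps>0$ matches the paper: the paper likewise observes that a crossing of $sQ$ by $\mathcal{L}^\eps_0$ can fail only if $sQ^\ast$ is crossed by $\mathcal{E}_{-\eps}$ or by $\mathcal{E}_\eps^c$, both of which have probability going to $0$ by Theorem~\ref{t.main2}. Your existence step via circuits in dyadic annuli plus Borel--Cantelli is also the paper's route; and you correctly avoid FKG in the gluing (using a union bound on the complements rather than positive association), which is precisely the caveat the paper flags, since $\mathcal{L}^\eps_0$ is not increasing or decreasing in $f$ and does not enjoy positive association.

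The genuine gap is in your uniqueness step. You appeal to Burton--Keane with ``finite energy'' inside a box $Q$, and justify the required insertion probability by the non-degeneracy of $f$. This does not work. Non-degeneracy in Proposition~\ref{p.areg} concerns finite-dimensional marginals only; it says nothing about the conditional law of $f|_Q$ given $f|_{Q^c}$, which can be \emph{degenerate}. Indeed, the Bargmann--Fock field satisfies every hypothesis of Theorem~\ref{t.main1} (it has $q(x)\propto e^{-|x|^2}$, which is strongly positive, $D_4$-symmetric, and has strong exponential decay), yet it is a.s.\ real-analytic, so $f|_Q$ is a.s.\ \emph{determined} by $f|_{Q^c}$ by analytic continuation. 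Conditionally on $f|_{Q^c}$, the probability of $\{|f|\le\eps\text{ on }Q\}$ is then a.s.\ $0$ or $1$, so no trifurcations can be created by local resampling and the Burton--Keane argument, as you set it up, collapses. More generally, even under only polynomial decay, the Cameron--Martin space of a stationary Gaussian field with spectral density need not contain compactly supported functions, so there is no obvious ``local modification'' version of finite energy. Fortunately, the uniqueness is already forced by the circuit construction you built for the existence step, which is exactly what the paper does (and what the paper does for Theorem~\ref{t.main1} as well, not Burton--Keane): a.s.\ all but finitely many dyadic annuli contain an $\mathcal{L}^\eps_0$-circuit surrounding the origin, and every unbounded component must meet each such circuit (it must exit each annulus), so any two unbounded components are joined through a common circuit, giving $N\le 1$. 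You should replace the Burton--Keane paragraph with this observation; you already have all the ingredients.
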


\begin{remark}
\label{r.kesten}
Our assumptions are stronger than in the early works \cite{molchanov1983percolationi, molchanov1983percolationii, alex_96} described in Section \ref{s.litrev}, and so Theorem \ref{t.main1} was already known for $\ell \le 0$ and for $\ell$ sufficiently large. What is new is the statement that $\mathcal{E}_\ell$ percolates at \textit{every} positive level $\ell > 0$, which was previously only known in the case of the Bargmann-Fock field \cite{rv_bf}. 

\medskip
This statement is the analogue, for smooth planar Gaussian fields, of Kesten's celebrated result for (Bernoulli) bond percolation on the square lattice, which we recall now. Fix $p \in [0,1]$ and colour each edge of the square lattice $\Z^2$ black independently with probability~$p$. Harris \cite{harris1960lower} showed that at the `self-dual' point, $p = 1/2$, there are almost surely no unbounded black clusters (i.e.\ unbounded components of the sub-graph of black edges). Much later, Kesten famously proved \cite{kesten1980critical} the existence of a phase transition at the `self-dual' point, i.e.\ showed that if $p>1/2$ then almost surely there is a (unique) unbounded black cluster. 
\end{remark}

\begin{remark}
Theorems \ref{t.main1} and \ref{t.main1a} require the strong positivity condition $q \ge 0$, which can be contrasted with previous works (see the discussion in Section \ref{s.litrev}) that assumed only weak positivity~$\kappa \ge 0$. We do not believe strong (as opposed to weak) positivity to be fundamental to the result, and in fact we suspect it could be removed (at the expense of the full strength of Theorem \ref{t.main3} below), although we do not pursue this here. In fact, strong positivity is only used at a single place in the proof (see the discussion in Section~\ref{s.connect}).

\medskip
We also remark that an (apparently) even stronger positivity condition (often called \textit{total positivity}) holds for the discrete planar Gaussian free field (GFF), and was a crucial ingredient in recently obtained results that bear some similarities to ours \cite{rodriguez20150} (although in a very different setting).
\end{remark}

\begin{remark}
Our techniques and results likely extend to the setting of sequences of smooth Gaussian fields on compact manifolds such as the sphere or flat torus, as in \cite{bmw_17}, although additional technical difficulties may arise. Similar results likely hold also for many classes of non-Gaussian fields (e.g.\ chi-squared fields, shot-noise etc.), which could have potential applications in physics \cite{w_82} and in the statistical testing of spatial noise \cite{bel_17}. However, many of our techniques are tailored to the Gaussian setting, so would not immediately apply to other classes of fields.
\end{remark}

We next turn to a quantitative description of the phase transition, and show in particular that it is \textit{sharp}. For this we need to introduce notation for \textit{crossing events}. For each $r > 0$ and $x \in \mathbb{R}^2$, let $B_r(x) = \{ y \in \mathbb{R}^2 : |x-y| \le r\}$ denote the Euclidean ball with radius $r$ centred at~$x$, and abbreviate $B_r=B_r(0)$. Define a \textit{quad} $Q$ to be a simply-connected piece-wise smooth compact domain $D \subset \mathbb{R}^2$ together with two disjoint boundary arcs $\gamma$ and $\gamma'$. One can take, for instance,~$D$ to be a rectangle and $\gamma$ and $\gamma'$ to be opposite edges. 

\medskip
For each quad $Q$ and level $\ell$, let $\cross_\ell(Q)$ denote the event that there is a connected component of $\mathcal{E}_\ell$ that crosses $Q$, i.e., whose intersection with~$Q$ intersects both $\gamma$ and $\gamma'$. Similarly, for $0 < r_1 < r_2$, let $\arm_\ell(r_1,r_2)$ denote the event that there is a connected component of $\mathcal{E}_\ell$ that intersects both $\partial B_{r_1}$ and $\partial B_{r_2}$. Note that our assumptions on $f$ and $Q$ ensure that each of these events is measurable.

\begin{thm}[Sharpness of the phase transition]
\label{t.main2}
Suppose that Assumptions \ref{a.reg} and \ref{a.sym} hold, that the weak positivity condition in Assumption \ref{a.swp} holds, and that Assumption~\ref{a.dec} holds for a given $\beta > 2$. Then for every quad $Q$, 
\[     \inf_{R > 0} \, \mathbb{P} [ f \in \cross_0(RQ) ]   > 0  \quad \text{and} \quad    \sup_{R > 0}  \, \mathbb{P} [f \in \cross_0(RQ)   ]  < 1,  \]
and moreover there exist $c_1, c_2 > 0$ such that, for each $0 < r < R$,
\[   \mathbb{P}[   f   \in \arm_0(r, R ) ]     <  c_1 \left( \frac{r}{R} \right)^{c_2}.  \] 
Suppose in addition that the strong positivity condition in Assumption \ref{a.swp} holds. Then the following are true:
\begin{itemize}
\item If $\ell < 0$, then for every quad $Q$ there exist $c_1, c_2 > 0$ such that, for all $R \ge 1$,
\begin{equation}
\label{e.l<0}
 \Pro \left[ f \in \cross_\ell(RQ ) \right] < c_1 e^{-c_2s}     . 
 \end{equation}
\item If $\ell > 0$, then for every quad $Q$ there exist $c_1, c_2 > 0$ such that, for all $R \ge 1$,
\begin{equation}
\label{e.l>0}
 \Pro \left[ f \in \cross_\ell(RQ ) \right] > 1- c_1 e^{-c_2s}    . 
 \end{equation}
\end{itemize}
\end{thm}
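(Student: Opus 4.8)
The plan is to follow the now-standard strategy, adapted to the Gaussian/white-noise setting, in which sharp threshold inputs are turned into a phase-transition dichotomy via Tassion's RSW machinery and a renormalisation/gluing argument. The starting point is the OSSS-type inequality for functionals of the white noise $W$: discretising $W$ into i.i.d.\ blocks at some small scale, one obtains (following Duminil-Copin--Raoufi--Tassion) a differential inequality for $s \mapsto \mathbb{P}(f \in \cross_\ell(sQ))$ controlled by the maximal influence of a block, which in turn is bounded by a one-arm type quantity $\mathbb{P}(f \in \arm_\ell(\cdot,\cdot))$ using the quasi-independence results alluded to in the abstract. Integrating this differential inequality across $\ell$ near $0$ gives the key input: either crossing probabilities at level $\ell$ are bounded away from $0$ uniformly in scale, or they decay (at least) polynomially fast in the scale, and the near-critical window is polynomially narrow. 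This is the engine behind all four displayed conclusions.

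The key steps, in order, are as follows. (1) Establish that at $\ell=0$, self-duality (from the $D_4$ symmetry, Assumption \ref{a.sym}) together with the RSW theory — Tassion's method applied to the Gaussian field, using the quasi-independence across disjoint regions that the white-noise representation provides — yields box-crossing estimates uniform in scale; this gives the first two claims, $\inf_s \mathbb{P}(\cross_0(sQ)) > 0$ and $\sup_s \mathbb{P}(\cross_0(sQ)) < 1$, and, by the standard arm-event/annulus-gluing argument, the polynomial bound $\mathbb{P}(\arm_0(r,R)) < c_1(r/R)^{c_2}$. (2) For $\ell < 0$: combine the OSSS differential inequality with the RSW estimates at $\ell = 0$ to show crossing probabilities at any fixed $\ell<0$ eventually enter the sub-critical regime, i.e.\ decay polynomially; then bootstrap this polynomial decay via a renormalisation scheme — tile $sQ$ by boxes at a well-chosen intermediate scale, declare a box "bad" if it is crossed, and use quasi-independence plus the polynomial decay to control the probability of a long chain of bad boxes — to upgrade the decay to $c_1 e^{-c_2 \log^2 s}$ (the $\log^2$ loss coming from the need to let the block discretisation scale grow slowly, as reflected in Assumption \ref{a.dec}(2)). (3) For $\ell > 0$: apply the same argument to $-f$ and the dual crossing event, since $\cross_\ell(sQ)^c$ corresponds to a crossing of the "rotated" quad by $\mathcal{E}_{-\ell}(-f)$-type sets, giving \eqref{e.l>0} from \eqref{e.l<0}. (4) Under strong exponential decay of $q$, the quasi-independence error terms are exponentially small, so the block scale can be kept bounded (a constant fraction of the box size) in the renormalisation, removing the $\log^2$ inefficiency and yielding $c_1 e^{-c_2 s}$.

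I expect the main obstacle to be step (2)'s two halves working in tandem: first, making the OSSS inequality genuinely usable requires carefully discretising the white noise and bounding block influences by arm probabilities, and the passage from "polynomially small crossing probability at level $\ell$" back to a statement at a \emph{fixed} deterministic $\ell<0$ (rather than an $\ell$ that drifts with the scale) is delicate — it is precisely here that integrating the differential inequality over a level interval, and knowing the RSW bounds at $\ell=0$ as a boundary condition, is essential. Second, the renormalisation that turns polynomial into stretched-exponential decay must be done with the block/discretisation scale growing with the box scale in order to keep the quasi-independence corrections negligible relative to the (only polynomially small) crossing probabilities being multiplied; optimising this trade-off is what produces the $e^{-c_2\log^2 s}$ rate and what forces the slightly-stronger-than-$|x|^{-\beta}$ form of Assumption \ref{a.dec}(1) and the specific $e^{-|x|\log^2|x|}$ form of Assumption \ref{a.dec}(2). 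The strong positivity hypothesis enters, as the authors note, at a single gluing/connection step — presumably to ensure a suitable FKG-type or monotone-coupling property when merging crossing clusters across adjacent boxes — and everything downstream of that step is insensitive to whether only weak positivity holds.
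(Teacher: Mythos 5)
Your outline matches the paper's strategy essentially point for point: quasi-independence via white-noise truncation plus the Cameron--Martin perturbation bound, Tassion's RSW machinery at $\ell=0$ (giving the box-crossing and one-arm estimates), an OSSS differential inequality applied to the truncated and discretised field $f^\eps_r$ with revealments controlled by arm-event probabilities, a renormalisation/bootstrapping scheme (via ``multiple crossing'' events and quasi-independence at a growing truncation scale, together with the functional Lemmas~\ref{l.aux1} and~\ref{l.aux2}) that produces the $e^{-c_2\log^2 s}$ rate and upgrades it to $e^{-c_2 s}$ under strong exponential decay, and $\pm f$ duality to pass between $\ell>0$ and $\ell<0$. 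One concrete slip: you guess that strong positivity $q\ge 0$ enters at a gluing step to supply an FKG/monotone-coupling property, but the FKG inequality used for gluing needs only weak positivity $\kappa\ge 0$ (Proposition~\ref{p.aswp}); the single place strong positivity is used is Proposition~\ref{p.russo}, where $q\ge 0$ guarantees that the crossing event, pulled back to the i.i.d.\ white-noise coordinates $(\eta_v)$, is \emph{increasing in those coordinates}, which is exactly what lets Russo's formula be lower-bounded by the resampling influences appearing in the OSSS inequality.
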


\begin{remark}
The first statement of Theorem \ref{t.main2} gives analogues of the RSW estimates in critical percolation theory (see for instance~\cite{grimmett1999percolation,bollobas2006percolation}), which have previously been established under stronger conditions on the decay of correlations (roughly corresponding to $\beta > 4$) \cite{bg_16, bm_17, rv_rsw}. The statement about $ \arm_0(r, R)$  is the analogue of the \textit{one-arm decay} in percolation theory, and follows in a straightforward way from the RSW estimates (at least, if a preliminary `quasi-independence' property has been established; see Theorem \ref{t.qi}). Notably,  we need only the weak positivity condition $\kappa \ge 0$ for these statements.
\end{remark}

\begin{remark}
The second and third statements of Theorem \ref{t.main2} give quantitative bounds on crossing probabilities in `non-critical' regimes; previously such bounds had only been established for the Bargmann-Fock field \cite{rv_bf}. 

\medskip
In the case $\ell < 0$, the statement is a bound on the decay of crossing probabilities in the sub-critical regime, showing in particular that sub-critical crossing probabilities decay exponentially, just as for Bernoulli percolation \cite{kesten1980critical}.

\medskip
In the case $\ell > 0$, the second statement is a quantitative description of the claim in Theorem \ref{t.main1} that~$\mathcal{E}_\ell$ percolates, and in fact we use this statement to infer the percolation of $\mathcal{E}_\ell$ via a simple Borel-Cantelli argument. 
\end{remark}

\begin{remark}
Similar results to those in Theorem \ref{t.main2} could also be deduced for the level sets~$\mathcal{L}_\ell$ -- i.e.\ defining the crossing events $\cross_\ell(RQ )$ relative to $\mathcal{L}_\ell$ rather than $\mathcal{E}_\ell$ -- but we have chosen to omit such results. A notable exception is \eqref{e.l>0}, which does not hold for $\mathcal{L}_\ell$.
\end{remark}

\medskip

One consequence of Theorem \ref{t.main2} is that, for each $\ell >0$ and quad $Q$,
\begin{equation}
\label{e.window}
\Pro \left[ f \in \cross_\ell(RQ) \right] \to 1 \quad \text{as } R \to \infty.
\end{equation}
A natural question is to determine how slowly a positive sequence $\ell_R \to 0$ must decay in order to ensure that \eqref{e.window} still holds for $\ell = \ell_R$; in other words, to quantify the size of the \textit{near-critical window}. Our next result shows that this window is of polynomial size, as it is for Bernoulli percolation. 

\begin{theorem}[Polynomial bounds on the near-critical window]\label{t.main3}
Suppose that Assumptions~\ref{a.reg} and \ref{a.sym} hold, that the weak positivity condition in Assumption~\ref{a.swp} holds, and that Assumption~\ref{a.dec} holds for a given $\beta > 2$. Then for each $c_1 > 1$ and every quad $Q$,
\[ \limsup_{R \to \infty} \, \Pro \left[ f \in \cross_{R^{-c_1} }(RQ) \right] < 1 . \]
Suppose in addition that the strong positivity condition in Assumption~\ref{a.swp} holds. Then there exists a $c_2 > 0$ such that, for every quad $Q$,
\[ \lim_{R \to \infty} \Pro \left[ f \in \cross_{R^{-c_2}}(RQ ) \right] = 1  .\]
\end{theorem}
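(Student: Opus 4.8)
The plan is to derive both statements from the sharpness estimates already contained in Theorem \ref{t.main2}, combined with a standard multiscale comparison between the near-critical field at level $\ell_s = s^{-c}$ and the critical field at level $0$. The key quantitative input is the polynomial bound on the near-critical window that is implicit in the OSSS/differential-inequality machinery: if one controls how fast $\frac{d}{d\ell}\Pro[f\in\cross_\ell(sQ)]$ grows near $\ell=0$ in terms of a characteristic length (the largest scale at which RSW-type crossing probabilities are still bounded away from $0$ and $1$), then integrating this inequality from $0$ to $\ell_s$ shows that crossing probabilities at level $\ell_s$ over a box of size $s$ differ from those at level $0$ by an amount that is negligible provided $\ell_s$ decays like a sufficiently large (resp.\ sufficiently small) power of $s$.

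For the first statement (upper bound with any $c_1>1$), I would argue by contradiction: if $\Pro[f\in\cross_{s^{-c_1}}(sQ)]\to 1$ along a subsequence, then by the quasi-independence result (Theorem \ref{t.qi}) and a gluing/RSW bootstrap one could propagate crossings across all scales up to $s$, forcing $\mathcal E_{s^{-c_1}}$ to contain arbitrarily large crossing clusters with probability close to $1$; but the sub-critical-type sharpness estimate, applied at the fixed negative level obtained after a small downward shift and rescaling, together with the fact that $s^{-c_1}$ is summable in dyadic scales when $c_1>1$, contradicts this via Borel--Cantelli. Concretely, I expect the correct mechanism to be: near $\ell=0$ the characteristic length $L(\ell)$ grows at most polynomially in $1/|\ell|$ (this is the quantitative content of Theorem \ref{t.main2}), so at scale $s$ with $\ell = s^{-c_1}$ and $c_1$ large one has $L(\ell) \ll s$, placing the box of size $s$ firmly in the "off-critical" regime where crossing probabilities are bounded away from $1$.

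For the second statement (existence of $c_2>0$ with $\Pro[f\in\cross_{s^{-c_2}}(sQ)]\to 1$), I would run the complementary estimate: taking $c_2$ small enough that $L(s^{-c_2})\gg s$, the box of size $s$ is still in the "near-critical" regime, where RSW estimates (the first part of Theorem \ref{t.main2}) give crossing probabilities uniformly bounded below; then a finite-scale renormalisation using the positive-association (FKG-type) and quasi-independence properties, together with the fact that $s^{-c_2}>0$ so $\mathcal E_{s^{-c_2}}$ strictly dominates $\mathcal E_0$, boosts this uniform lower bound to one tending to $1$ — here one uses that going from level $0$ to a small positive level supplies the "extra room" needed to combine $O(1)$ many crossings into a single crossing with overwhelming probability, exactly as in the proof of \eqref{e.window} but now tracking the dependence on $s$. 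This is why strong positivity is needed for the second statement but only weak positivity for the first.

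The main obstacle, I expect, is making the relation $\ell \leftrightarrow L(\ell)$ quantitative with an explicit polynomial exponent: one must extract from the differential inequality produced by the OSSS inequality a two-sided bound of the form $c\,|\ell|^{-a} \le L(\ell) \le C\,|\ell|^{-b}$ (with $0<a\le b$ depending on the percolation exponents), and then choose $c_1$ larger than $b$ and $c_2$ smaller than $1/b$ (or the analogous thresholds). Keeping uniform constants through the rescaling $f\mapsto f(s\,\cdot)$ — which changes the white-noise kernel $q$ to a rescaled kernel $q_r$ whose decay and symmetry properties are preserved but whose regularity constants must be controlled uniformly — is the technical heart, and is presumably handled by the families $f_r$ introduced in the remark after Assumption \ref{a.reg}.
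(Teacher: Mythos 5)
The paper's actual proofs of both statements are substantially more direct than what you propose, and your sketch contains a logical inversion that would need to be repaired before it could be made precise.

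For the first statement, the paper does not argue by contradiction, Borel--Cantelli, or characteristic lengths. It simply applies the Cameron--Martin estimate (Proposition~\ref{p.cameron_martin} / Corollary~\ref{c.cameron_martin}): since $\rho(0) > 0$ one gets, for any monotone event $A$ supported in a box of size $s$,
\[
\bigl|\Pro[f\in A] - \Pro[f - t\in A]\bigr| \le c\, s\, |t|,
\]
so with $t=s^{-c_1}$ and $c_1>1$ the gap $|\Pro[\cross_0(sQ)]-\Pro[\cross_{s^{-c_1}}(sQ)]|\lesssim s^{1-c_1}\to 0$. Combined with the RSW upper bound $\sup_s\Pro[\cross_0(sQ)]<1$ from Theorem~\ref{t.rsw}, this is the whole proof. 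Your contradiction route using summability of $s^{-c_1}$ in dyadic scales and a ``downward shift to a negative level'' would not, as stated, produce a contradiction: percolation of $\mathcal E_\ell$ at fixed $\ell>0$ is compatible with large crossing clusters appearing with high probability, so the Borel--Cantelli mechanism you invoke is not pulling in the direction you want. Moreover, your ``characteristic length'' reasoning is circular at this point in the paper: the polynomial bound on $L(\ell)$ is a consequence of the sharpness machinery, not an independent input you can cite from Theorem~\ref{t.main2}.

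For the second statement, the paper's proof is one line: it is a direct corollary of Theorem~\ref{t.phase_trans} (the OSSS theorem, proved in Section~\ref{s.osss}), which asserts precisely that $\Pro[f\in\cross_{R^{-\theta}}(2R,R)]\to 1$ for some $\theta>0$, plus a standard gluing step to pass from the $2R\times R$ rectangle to a general quad. Your appeal to ``finite-scale renormalisation using FKG and quasi-independence'' to boost an RSW lower bound to probability $1$ is not enough without the OSSS differential inequality as quantitative input: the RSW bound at level $0$ alone is scale-invariant and gives no improvement as $s\to\infty$, so some quantitative gain from the strictly positive level is needed, and that gain is exactly what Theorem~\ref{t.phase_trans} supplies.

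Finally, note a sign error in your heuristic: large $c_1$ means $\ell=s^{-c_1}$ is very small, hence $L(\ell)$ is \emph{very large}, so $L(\ell)\gg s$ (near-critical regime, crossing probability bounded away from $1$); and small $c_2$ means $\ell=s^{-c_2}$ decays slowly, so $L(\ell)\ll s$ (supercritical regime, crossing probability $\to 1$). You have stated both relations backwards, and the phrase ``off-critical regime where crossing probabilities are bounded away from $1$'' is incorrect for $\ell>0$: off-critical at positive level means crossing probabilities are close to $1$, not bounded away from it.
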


\begin{remark}
Again by analogy with Bernoulli percolation, it is natural to conjecture that the near-critical window is of polynomial size with exponent exactly $3/4$, i.e.\ the conclusion of Theorem~\ref{t.main3} is true for every $0 < c_2 < 3/4 < c_1 $. Our result shows that the exponent is strictly positive and at most $1$. This is comparable to what is known for bond percolation on the square lattice, for which the exponent has been shown to be strictly positive and \textit{strictly} less than~$1$, see~\cite{kesten1987scaling} (on the other hand, for site percolation on the triangular lattice the conjecture is known in full~\cite{Smirnov2001critical}).
\end{remark}

\subsection{A family of examples}
\label{s.e}
In this section we introduce a family of smooth planar Gaussian fields that illustrates the generality and scope of our results.

\medskip
Consider the \textit{rational quadratic} kernel (sometimes also called the \textit{Student-}$t$ kernel)
\[  \text{RQ}_\beta(x)  = (1 + |x|^2)^{-\beta/2}  , \quad \beta > 0 , \]
which is continuous, isotropic and positive-definite on $\mathbb{R}^2$; this kernel is extensively used in the modelling of spatial data, see, e.g., \cite[Chapter 4]{rw_06}. 

\medskip Fix a value of the parameter $\beta > 2$ and let $q = \text{RQ}_\beta$, which satisfies the necessary conditions for the white-noise representation to be valid.  The Fourier transform of $q$ is known \cite[Chapter 17]{poularikas99}, and is given by 
\[ \rho(x) = \mathcal{F}[ q ](x) \propto |x|^{\beta/2 - 1} K_{\beta/2 - 1}(|x|)  ,\]
 where $K_n(z)$ is the modified Bessel function of the second kind. Hence, $f = q \star W$ is a stationary planar Gaussian field with spectral density 
 \[ \rho^2(x) = (\mathcal{F}[ q ])^2(x) \propto |x|^{\beta - 2} K^2_{\beta/2 - 1}(|x|)   .\]
Observe that each of Assumptions \ref{a.reg} and \ref{a.sym}, and the strong positivity condition in Assumption~\ref{a.swp}, are easily verified. Moreover, one can check that the decay condition in Assumption~\ref{a.dec} is satisfied for $\beta$.
 
  \medskip
Given the discussion above, we see that our results apply to this planar Gaussian field for each $\beta > 2$. On the other hand, the covariance kernel of this field satisfies
\[   \kappa(x) \sim c |x|^{-\beta}  \ , \quad \text{as } |x| \to \infty, \]
and so none of the previous results in the literature (for instance in \cite{bg_16, rv_rsw}) apply to this field unless $\beta > 4$, and even then only the RSW estimates of Theorem~\ref{t.main2} were known~\cite{rv_rsw}. In particular, the results in Theorems \ref{t.main1} and \ref{t.main2} in the case $\ell > 0$, and the result in Theorem~\ref{t.main3}, were not previously known for \textit{any} value of $\beta > 2$.

\subsection{The percolation universality class}

A major unresolved question raised by our work is to determine how rapidly correlations must decay in order for the connectivity of the level sets of a smooth Gaussian field to be well-described on large scales by Bernoulli percolation, i.e.\ to determine the boundary of the `percolation universality class'. 

\medskip
In the physics literature, the `Harris criterion' (see, e.g., \cite{weinrib1984long}) is a well-known heuristic that determines whether long-range correlations influence large scale properties of discrete percolation models. Translated to our setting, the criterion suggests that the percolation universality class is determined by the convergence of
\[  \frac{1}{R^{5/2}}  \int_{x \in B_R} \int_{y \in B_R} \kappa(x-y) \, dx dy ,\]
which is roughly equivalent to demanding that $\kappa$ has polynomial decay with exponent $\beta > 3/2$ in the case $\kappa \geq 0$ (compared to $\beta > 2$ that is required in our results). It is an interesting question whether the Harris criterion can be formalised into a rigorous description of the universality class.

\medskip
Further, the analogy with Bernoulli percolation should go beyond even the results in Theorems~\ref{t.main1}, \ref{t.main2} and \ref{t.main3}. For instance, one might expect that the zero level sets $\mathcal{L}_0$ should, on large scales, behave similarly to the so-called $\textit{SLE}_6$ process (or, more precisely, the $\textit{CLE}_6$ loop ensemble \cite{sheffield99}), which is conjectured to describe the scaling limit of the boundaries of clusters in Bernoulli percolation; this conjecture is (only) proved for site percolation on the regular triangular lattice \cite{smirnov2007towards,camia2007critical}. This has been conjectured for the random plane wave (RPW) \cite{bds_07}, which is a universal Gaussian model for eigenfunctions of the Laplacian on generic (i.e.\ chaotic) smooth manifolds. The RPW has correlations that decay extremely slowly -- only at rate $1/\sqrt{R}$ -- but since the correlations are highly oscillatory, the Harris criterion is nevertheless still satisfied \cite{bs_07}.
 
 \medskip
There are also certain Gaussian fields for which the level sets are known not to resemble percolation clusters on large scales. For example, it is known that the `level lines' of the planar GFF are $\textit{SLE}_4$ processes \cite{ss_99}, and so the planar GFF lies in an entirely different universality class to percolation. 

\subsection{Overview of rest of the paper}

In Section \ref{s.overview} we present an overview of our methods and give a general outline of the proof of the main results. In Section \ref{s.gauss} we collect the arguments that are particular to the Gaussian setting of our work. In Section \ref{s.qirsw} we establish the crucial `quasi-independence' property for crossing events, and use it to deduce the RSW estimates for $\ell = 0$ that comprise the first statement of Theorem~\ref{t.main2}. We also deduce from the RSW estimates the first statement of Theorem~\ref{t.main3}. In Section \ref{s.osss} we begin our study of the sharp phase transition, establishing a qualitative description of the phase transition for crossings of a fixed rectangle at large scales. Finally, in Section \ref{s.final} we bootstrap the aforementioned result to complete the proof of the main results.

\subsection{Acknowledgments}
The authors are grateful to Dmitry Beliaev and Alejandro Rivera for many fruitful discussions, and to Dmitry Beliaev in particular for suggesting Kolmogorov's theorem as a way to prove Lemma \ref{l.btis}, and also for kindly allowing the use of his simulations in Figures \ref{f.perc} and~\ref{f.algo}. Additionally the first author would like to thank Jeremiah Buckley, Naomi Feldheim, Michael McAuley and Mihail Poplavskyi, and the second author would like to thank Vincent Beffara, Charles-Édouart Bréhier, Hugo Duminil-Copin, Thibault Espinasse, Christophe Garban, Damien Gayet, Thomas Letendre, Aran Raoufi, Avelio Sep\'{u}lveda and Vincent Tassion, who all assisted with helpful discussions.

\bigskip

\section{Overview of our methods and outline of the proof}
\label{s.overview}
Compared to previous works on the links between Gaussian fields and percolation, our methods contain several new techniques that we emphasise here:
\begin{itemize}
\item First, our overarching methodology is to work \textit{directly in the continuum}, rather than restricting the Gaussian field to a lattice as was done in all previous works on the topic (with the exception of \cite{alex_96}). This opens up new techniques, such as our application of the Cameron--Martin theorem to control the effect of varying the level (this is inspired by similar arguments in the analysis of Boolean functions, see Section~\ref{s.vl}). On the other hand, we do `discretise' the field by approximating it by a random variable taking values in finite dimensional subspaces of the set of continuous planar functions. 
\item Second, we exploit heavily the white-noise representation of a Gaussian field in \eqref{e.wnr}. Even though this representation is well-known in other contexts (see, e.g., \cite{h_02}, or \cite{m_69,c_76} for a closely-related representation), as far as we know it has never been used to study the connectivity of level sets. We give more details on how we use this representation immediately below.
\item Third, we prove that there is a phase transition at level $0$ by appealing to recent advances in the study of randomised algorithms, and in particular the development of the \textit{OSSS inequality}; this is inspired by recent applications of similar ideas to various models \cite{duminil2017exponential, duminil2017sharp, duminil2018poisson,ahlberg2017noise}. Again we give more details immediately below.
\item Finally, we use a `sprinkled' quasi-independence property (see Proposition \ref{p.sqi}) in order to bootstrap the results obtained by using randomised algorithms and obtain sharpness results (i.e.\ exponential decay of connection probabilities in the subcritical phase). Here `sprinkled' means that instead of comparing $\Pro \left[ f \in A \right] \Pro \left[ f \in B \right]$ to $\Pro \left[ f \in A \cap B \right]$, we compare the first quantity to $\Pro \left[ f - \eps \in A \cap B \right]$. This is in contrast to \cite{rv_bf}, in which the rapid decay of correlations in the Bargmann-Fock field ensured that a classical notion of quasi-independence was sufficient.
\end{itemize}

In the rest of this section we (i) describe how we exploit the white-noise representation, (ii) explain the OSSS inequality and how we apply it, and (iii) give a brief outline of the proof.

\subsection{Truncation and discretisation}
\label{s.td}

The white-noise representation is useful because of two operations  -- \textit{truncation} and \textit{discretisation} -- that allow us to couple $f$ to other Gaussian fields which are close to $f$ with high probability but have desirable properties. 

\medskip 
\textbf{Truncation.} 
Recall that we take \eqref{e.wnr} to be the definition of $f$. For each $r \ge 1$, let $\chi_r: \mathbb{R}^2 \to [0, 1]$ be a smooth isotropic approximation of the radial cut-off function $\un_{| \cdot | \leq r/2}$. More precisely, we ask that $\chi_r$ is smooth, isotropic, and satisfies
\[ \chi_r(x) = \begin{cases} 1, & \text{if } |x|\leq r/2-1/4 , \\  0, & \text{if } |x| \geq r/2 , \end{cases} \]
and that the $k^\text{th}$ derivatives of $\chi_r$, for all $k \ge 1$, are uniformly bounded in $x \in \mathbb{R}^2$ and $r \ge 1$. Abbreviate $q_r = q \chi_r$, and observe that, for all $r \ge 1$, either $q_r$ is identically equal to $0$ or it satisfies Assumption~\ref{a.reg} whenever $q$ does. Hence, for each $r \ge 1$ we can define the stationary centred planar Gaussian field
\[ f_r = q_r \star W .\]
We call this the $r$-\textit{truncation of} $f$, and highlight the crucial fact that it is an $r$\textit{-dependent field}, meaning that $f_r|_{D_1}$ and $f_r|_{D_2}$ are independent for all subsets $D_1, D_2 \subseteq \mathbb{R}^2$ such that $d(D_1, D_2) \ge r$. Moreover, we have good control on the difference $f - f_r$ since, by definition,
\[   (f - f_r)(\cdot) =  ((q - q_r) \star W)(\cdot) =  \int (q - q_r)(\cdot -u)  \, dW(u) . \] 
As we show in Section \ref{s.gauss}, under Assumptions \ref{a.reg} and \ref{a.dec} we can control this difference well in the sup-norm on compact sets, which is what we shall need for our application to crossing events (see Section~\ref{s.outline} for more detail on this application).

\medskip 
\textbf{Discretisation.} 
Next, for each $\varepsilon > 0$ we couple the white-noise $W$ to a discretised version $W^\eps$ at scale $\varepsilon$ by setting
\[  \eta_v = \eps^{-1} \int_{x \in v + [-\eps/2, \eps/2]^2 } dW(x)  \ , \quad v \in \eps \Z^2 , \]
and defining 
\[ W^\eps(x)=\eps^{-1} \sum_{v \in \eps \Z^2} \eta_v \id_{x \in v + [-\eps/2, \eps/2]^2 }  . \]
Note that the $\eta_v$ are distributed as i.i.d.\ standard Gaussian variables. As we show in Proposition~\ref{p.areg}, under suitable assumptions on $q$ each $\varepsilon > 0$ gives rise to a planar Gaussian field via
\[ f^\eps=q \star W^\eps    . \]
We call this the $\varepsilon$-\textit{discretisation of} $f$, and note that it is \textit{stationary with respect to lattice shifts} $x \mapsto x + v$, $v \in \eps \Z^2$. 

\medskip
Similarly as for $f_r$, in Section \ref{s.gauss} we show how Assumption \ref{a.reg} allows us to control the sup-norm of $f - f^\eps$. To give an idea how this is done, let us rewrite the map $x \mapsto f^\eps(x)$ in a slightly different form. For each continuous function $g : \mathbb{R}^2 \to \mathbb{R}$ and each $\eps > 0$ and $x \in \R^2$, let $g^{x, \eps}$ be defined by setting, for each $v \in \eps \Z^2$ and $u \in v + [-\eps/2, \eps/2]^2$,
\begin{equation}
\label{e.pwc}
 g^{x, \eps}(x+u) = \eps^{-2} \int_{w \in  v  + [-\eps/2, \eps/2]^2} g(x+w) \, dw  .
  \end{equation}
 i.e.\ $g^{x, \eps}$ is a piece-wise constant function that takes its average value on each face of the shifted lattice $x + (\eps/2, \eps/2) + \eps \Z^2$; we call $g^{x, \eps}$ the \textit{piece-wise constant approximation of} $g$. With this definition, we can express $f^\eps(x)$ as 
\begin{align*}
 f^\eps(x) & = (q \star W^\eps)(x)  =  \eps^{-1} \sum_{v \in \eps \Z^2}   \eta_v  \int_{u \in v  + [-\eps/2, \eps/2]^2}   q(x-u) \, du  
 \\
 &= \sum_{v \in \eps \Z^2}  \bigg( \int_{u \in v  + [-\eps/2, \eps/2]^2}   dW(u)  \bigg) \bigg( \eps^{-2} \int_{u \in v + [-\eps/2, \eps/2]^2}  q(x-u) \, du   \bigg)   \\
  &= \sum_{v \in \eps \Z^2}   \int_{u \in v  + [-\eps/2, \eps/2]^2} q^{x, \eps}(x-u) \,  dW(u)        \\
 &  = \int  q^{x, \eps}(x-u) \, dW(u) =   \left( q^{x, \eps} \star W \right)(x) .
 \end{align*}
The validity of the interchange of sum and integral is established by computing
\[
\E \bigg[ \Big( \sum_{v \in ( \eps \Z^2 ) \cap [-n,n]^2 }   \int_{u \in v  + [-\eps/2, \eps/2]^2} q^{x, \eps}(x-u) \,  dW(u)  -  \int  q^{x, \eps}(x-u) \, dW(u) \Big)^2 \bigg],
\]
which goes to $0$ as $n \to \infty$ by the definition of $W$ and the fact that $q^{x, \eps}(x-\cdot) \in L^2$ (which is a direct consequence of Assumption~\ref{a.reg}). Hence, the point-wise difference between $f$ and $f^\eps$ can be expressed as
\[   (f - f^\eps)(x) =   \left( (q - q^{x, \eps}) \star W \right)(x)  .  \]

\medskip
To avoid confusion, we remark that the description of $f^\eps$ as \textit{discrete} refers to the discrete white-noise $W^\eps$ and not the field $f^\eps$ itself, which is a continuous random field. This distinguishes our discretisation procedure from the discretisation procedures used in previous works on this topic, which consisted of restricting $f$ to the vertices of a lattice \cite{bg_16, bm_17,  rv_bf, rv_rsw}. On the other hand, the field $f^\eps$ is approximately \textit{finite-dimensional}. To explain this, observe that by combining the above ideas we can define, for each $r \ge 1$ and $\varepsilon > 0$, the field
\[ f^\eps_r := q_r \star W^\eps  .\]
This field is finite-dimensional on any compact domain $D$, meaning that we can write
\[ f^\eps_r |_D = G(\eta_1, \ldots , \eta_N) \]
for a function $G$ and $N \in \mathbb{N}$, where $\eta_i$ are standard i.i.d.\ Gaussian variables. This finite-dimensional approximation of $f$ is useful for applying the OSSS inequality, which we explain next.

\subsection{The OSSS inequality} \label{s.OSSSsub}
The OSSS inequality originated in~\cite{o2005every} in the study of the complexity of algorithms; we refer to~\cite{duminil2017sharp} and~\cite[Chapter 12]{garban2014noise} for more about its origins. In the present paper we are solely interested in the recent applications of this inequality to establish sharp phase transitions in many important models of statistical physics (e.g.\ FK-cluster, Voronoi percolation, Poisson-Boolean percolation; see \cite{duminil2017exponential, duminil2017sharp, duminil2018poisson,ahlberg2017noise}).

\medskip
Let us first recall the OSSS inequality. Let $\Lambda$ be a finite set, let $\mu$ be a probability measure on a measurable space $(E,\mathcal{E})$, and consider the product probability space $(E^\Lambda,\mathcal{E}^{\otimes \Lambda},\mu^{\otimes \Lambda})$. Given any $A \in \mathcal{E}^{\otimes \Lambda}$ of this product space and a coordinate $i \in \Lambda$, the \textit{influence} $I_i^\mu(A)$ of the $i^{\rm{th}}$ coordinate on $A$ is defined as the probability that resampling the $i^{\rm{th}}$ coordinate modifies $\un_A$, i.e.,
\[
I_i^\mu(A) = \Pro \left[ \un_A(\omega) \neq \un_A(\widetilde{\omega}) \right]  ,
\]
where $\omega \sim \mu^{\otimes \Lambda}$ and where $\widetilde{\omega}=\omega$ except that $\omega_i$ is resampled independently. 

\medskip
Now, let $\mathcal{A}$ be a \textit{random algorithm} that determines $A$; this means that $\mathcal{A}$ is a procedure that reveals step-by-step the coordinates of $\omega$ and stops once the value of $\un_A(\omega)$ is known, and for which the choice of the next coordinate to be revealed depends only on (i) a \textit{random seed} that is initialised once and for all at the start of the algorithm, (ii) the coordinates that have already been revealed, (iii) the value of $\omega$ on these coordinates. The \textit{revealment} $\delta_i^\mu(\mathcal{A})$ of the $i^{\rm{th}}$ coordinate for the algorithm $\mathcal{A}$ is the probability that $\omega_i$ is revealed by the algorithm. 

\medskip
The OSSS inequality (originally proven for $E$ finite~\cite{o2005every} but which also holds in the general\footnote{In both \cite{o2005every} and \cite{duminil2017exponential} the OSSS inequality is written without randomness on the initial seed but the case of a random seed is a direct consequence of the deterministic case.} case~\cite[Remark~5]{duminil2017exponential}) can be stated as follows:
\begin{thm}[OSSS inequality]
\label{t.osss}
For every $A \in \mathcal{E}^{\otimes \Lambda}$ and algorithm $\mathcal{A}$ that determines $A$,
\[
\text{\textup{Var}}_\mu(\un_A(\omega)) \leq \sum_{i \in \Lambda} \delta^\mu_i(\mathcal{A}) I_i^\mu(A)  .
\]
\end{thm}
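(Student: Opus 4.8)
The plan is to prove the inequality by induction on $|\Lambda|$, exhibiting an explicit coupling/martingale decomposition driven by the order in which the algorithm $\mathcal{A}$ reveals coordinates. First I would set up the key object: run the algorithm $\mathcal{A}$ on $\omega \sim \mu^{\otimes \Lambda}$, and let $(i_1, i_2, \ldots)$ be the (random) sequence of coordinates it queries, stopping at a random time $T$ once $\un_A(\omega)$ is determined. Write $\mathcal{F}_k$ for the $\sigma$-algebra generated by the random seed together with $(i_1, \omega_{i_1}), \ldots, (i_k, \omega_{i_k})$, and set $M_k = \E[\un_A(\omega) \mid \mathcal{F}_k]$. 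Since $\mathcal{A}$ determines $A$, we have $M_T = \un_A(\omega)$ almost surely, while $M_0 = \E[\un_A(\omega)]$ (conditioning on the seed alone gives no information, as the seed is independent of $\omega$). Thus $\un_A(\omega) - \E[\un_A(\omega)] = \sum_{k \ge 1} (M_k - M_{k-1})$ is a sum of martingale increments, and by orthogonality $\var_\mu(\un_A(\omega)) = \sum_{k \ge 1} \E[(M_k - M_{k-1})^2]$.

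Next I would bound $\E[(M_k - M_{k-1})^2]$ by relating it to the influence of the coordinate $i_k$ revealed at step $k$. The increment $M_k - M_{k-1}$ measures how much $\E[\un_A \mid \cdot]$ moves when the value $\omega_{i_k}$ is incorporated; intuitively this can only be nonzero if resampling $\omega_{i_k}$ could have changed $\un_A$. Making this precise is the crux: conditionally on $\mathcal{F}_{k-1}$ (which includes the decision to query $i_k$), the variance of $M_k - M_{k-1}$ should be controlled by $\E[\, |\un_A(\omega) - \un_A(\omega^{(i_k)})| \mid \mathcal{F}_{k-1}]$ where $\omega^{(i_k)}$ has $\omega_{i_k}$ resampled — and crucially, resampling $\omega_{i_k}$ keeps the conditioning data $\mathcal{F}_{k-1}$ intact because the algorithm's choice of $i_k$ did not depend on $\omega_{i_k}$. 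Grouping the sum over $k$ by which coordinate is revealed, $\sum_k \E[(M_k-M_{k-1})^2] \le \sum_{i \in \Lambda} \Pro[\,i \text{ is revealed}\,] \cdot (\text{resampling effect at } i)$, and the first factor is exactly $\delta_i^\mu(\mathcal{A})$ while the second is exactly $I_i^\mu(A)$. This is where I expect the main technical obstacle: carefully justifying the conditional estimate on the increment, in particular that the "decoupling" of $\omega_{i_k}$ from the event $\{i_k \text{ is the } k\text{-th query}\}$ is valid — this is where the defining property of a decision-tree algorithm (that the next coordinate depends only on previously revealed coordinates and their values) is used, and one must handle the general (non-finite) measure space $(E,\mathcal{E})$ by working with conditional expectations rather than explicit sums.

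Alternatively — and this may be cleaner to write — I would instead reduce to a one-step statement and induct on $|\Lambda|$: condition on the first revealed coordinate $i_1$ and its value $\omega_{i_1}$; the algorithm restricted to the remaining coordinates is again a valid algorithm on $E^{\Lambda \setminus \{i_1\}}$ for the conditional event, so the inductive hypothesis applies to $\var(\un_A \mid \omega_{i_1})$. Combining with the variance decomposition $\var(\un_A) = \E[\var(\un_A \mid \omega_{i_1})] + \var(\E[\un_A \mid \omega_{i_1}])$ and bounding the second term by $I_{i_1}^\mu(A) \le \delta_{i_1}^\mu(\mathcal{A}) I_{i_1}^\mu(A)$ (noting $\delta_{i_1}^\mu(\mathcal{A}) = 1$ since $i_1$ is always revealed), while checking that the revealments of the restricted algorithm relate correctly to those of $\mathcal{A}$, closes the induction. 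The base case $|\Lambda| = 1$ is immediate. Either route is standard following the treatment in~\cite{o2005every} and~\cite{duminil2017exponential}; since the statement and its proof are not original to this paper, I would keep the argument brief and refer to those sources for the routine measure-theoretic details in the general case.
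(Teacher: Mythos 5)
The paper does not prove Theorem~\ref{t.osss}: it states the inequality and refers the reader to~\cite{o2005every} (the original OSSS paper) and~\cite{duminil2017exponential}. Your closing intention to ``keep the argument brief and refer to those sources'' therefore matches what the paper actually does. But the two proof routes you sketch on the way both have a genuine gap at precisely the step you flag as the crux, so the sketch should not be trusted as written.

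Take the martingale route. Setting $M_k = \E[\un_A(\omega) \mid \mathcal{F}_k]$ and writing $\var(\un_A) = \sum_k \E[(M_k - M_{k-1})^2]$ is fine, and you correctly observe that, conditionally on $\mathcal{F}_{k-1}$, the increment is controlled by the \emph{conditional} probability $\Pro[\un_A(\omega) \neq \un_A(\omega^{(i_k)}) \mid \mathcal{F}_{k-1}]$ that resampling $i_k$ flips the answer. The gap is the next step: grouping the sum by revealed coordinate does \emph{not} produce $\sum_i \delta_i^\mu(\mathcal{A})\, I_i^\mu(A)$. Since $\{i_k = i,\, k \le T\}$ is $\mathcal{F}_{k-1}$-measurable, the tower property turns the grouped sum into
\[
\sum_{i \in \Lambda} \Pro\bigl[\{i \text{ is revealed}\} \cap \{\un_A(\omega) \neq \un_A(\omega^{(i)})\}\bigr],
\]
the joint probability of the two events, \emph{not} the product of their marginals. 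For an adaptive algorithm these events are positively correlated (the whole point of a good algorithm is to preferentially reveal the coordinates that matter), so the joint probability is typically strictly larger than $\delta_i^\mu(\mathcal{A})\, I_i^\mu(A)$. A concrete instance: $\Lambda = \{1,2\}$, $E = \{0,1\}$ uniform, $A = \{\omega_1 = \omega_2 = 1\}$, algorithm ``query $1$; if $\omega_1 = 1$, query $2$.'' Then $\Pro[\{2 \text{ revealed}\} \cap \{\un_A(\omega) \neq \un_A(\omega^{(2)})\}] = 1/4$ while $\delta_2 I_2 = (1/2)(1/4) = 1/8$, so the factoring you assert (``the first factor is exactly $\delta_i$, the second exactly $I_i$'') fails. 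The induction route breaks for the same reason: after conditioning on $\omega_{i_1} = a$, the restricted algorithm and the restricted event $A_a$ have revealments $\delta_i^{(a)}$ and influences $I_i^{(a)}$ that are correlated with the conditioning, and in general $\E_a[\delta_i^{(a)} I_i^{(a)}] > \delta_i I_i$ (same example: $\E_a[\delta_2^{(a)} I_2^{(a)}] = 1/4 > 1/8$), so the naive bookkeeping ``the revealments of the restricted algorithm relate correctly to those of $\mathcal{A}$'' does not close the induction. The actual proof (Lemma~3.1 of~\cite{o2005every}, or Section~2 of~\cite{duminil2017exponential}) needs a sharper coupling argument that produces the decoupled product $\delta_i I_i$; it is not a routine Doob-martingale or conditioning step.
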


Let us explain briefly, in the setting of bond percolation on the square lattice, how the OSSS inequality can be used to establish the existence of a phase transition. Let $\Lambda$ be the set of edges of the lattice, and let $E=\{ 0,1 \}$, where $1$ represents the colour black. For each $R > 0$, let $\cross(R)$ denote the event that there is a black crossing of a square of size $R$. By duality properties, we know that $\mathbb{P}_{1/2}(\cross(R)) = 1/2$ for each $R > 0$, where $\mathbb{P}_p$ is the measure associated to the model with probability $p$. Once this has been observed, the first step in the proof of the phase transition is \textit{Russo's formula} (see for instance~\cite{grimmett1999percolation,bollobas2006percolation}) which implies that
\begin{equation}
\label{e.rus1}
\frac{d}{dp} \Pro_p \left[ \cross(R) \right] = \frac{1}{2p(1-p)}\sum_{i \in \Lambda} I_i^{\mu_p}(\cross(R)) ,
\end{equation}
where $\mu_p=p\delta_1+(1-p)\delta_{-1}$. Note that the factor $1/(2p(1-p))$ arises since the notion of influence that one would usually use in order to write Russo's formula in the Boolean case $E=\{ 0,1\}$ is 
\[ \Pro \left[ \un_A(\omega) \neq \un_A(\widehat{\omega}) \right] \]
 where $\widehat{\omega}=\omega$ except that we change (and \textit{not} resample) the $i^{\rm{th}}$ coordinate. Applying the OSSS inequality, we have
\[
\frac{d}{dp} \Pro_p \left[ \cross(R) \right] \geq \frac{1}{2p(1-p)}\frac{\var_{\mu_p}(\un_\cross(R))}{\sup_{i} \delta^{\mu_p}_i(\cross(R) )} .
\]
Hence, in order to demonstrate a phase transition, for instance to show that
\[  \frac{d}{dp} \Pro_p \left[ \cross(R) \right]  \bigg|_{p = 1/2} \to \infty  \ , \quad \text{as } R \to \infty, \]
it is sufficient to show that 
\[ \sup_{i} \delta^{\mu_{1/2}}_i(\cross(R) ) \to 0  , \quad \text{as } R \to \infty. \]
This latter step can be done by noting that, if the algorithm is suitably chosen, the revealments can be controlled by the probability of the one-arm event, i.e.\ the event that there is a black path from $0$ to distance $> c R$ (see~\cite{benjamini1999noise} and \cite{schramm2010quantitative} where such algorithms are used to study noise sensitivity questions). In turn, this is precisely what can be deduced from the RSW estimates at the critical level $p=1/2$.

\medskip
In the above argument the OSSS inequality was applied to crossing events of squares, whereas the control of the revealments $\delta_i$ was achieved by analysing one-arm events; this is roughly how we shall apply the OSSS inequality (see also \cite{ahlberg2017noise} where this argument is carried out for planar Voronoi percolation). In~\cite{duminil2017exponential, duminil2017sharp, duminil2018poisson} the OSSS inequality is instead applied to \textit{one-arm events directly}, a powerful approach that ultimately yields the phase transition \textit{in all dimensions} for a wide class of models. In our setting there are obstacles to applying the OSSS inequality to the one-arm events directly (explained in more detail in Remark~\ref{r.3d}), but if these could be overcome one might hope to be able to study also the phase transition for smooth Gaussian fields in dimensions $d \ge 3$.

\medskip
Let us now explain how we apply the OSSS inequality in our setting. Since the OSSS inequality as stated above applies only to product measures, our strategy is to exploit the discrete approximation of the white-noise representation introduced in the preceding section. More precisely, for suitably chosen $r, \eps > 0$ that depend on a scale $R > 0$, we study crossing events on the scale~$R$ for the truncated and discretised field 
\[
f^\eps_{r}=q_{r} \star W^\eps 
\]
 by applying the OSSS inequality to the independent Gaussian variables $(\eta_v)_{v \in \eps \Z^2}$ that define the discrete white-noise $W^\eps$. We then compare the truncated and discretised field $f^\eps_{r}$ to our original field $f$ via a `sprinkling' procedure. 

\medskip
The next task is to link the OSSS inequality to a Russo-type formula. As explained in the preceding section, when restricted to any compact domain $f^\eps_{r}$ is finite-dimension, and hence crossing events are measurable with respect to the finite-dimensional i.i.d.\ Gaussian vector of relevant white-noise coordinates $\eta_v$. In this setting there is a simple Russo-type formula (see~\eqref{e.russo_for_iid_Gaussian}) except, just as in the Boolean setting, the `influences' that arise \textit{are not the same as those which appear in the OSSS formula}. Indeed, in the Gaussian setting these influences are only comparable for events $A$ that are \textit{increasing with respect to the variables} $\eta_v$. Since in general this is only true for crossing events if $q \geq 0$, this requires us to impose the strong positivity condition (and is the only place this condition is used).

\medskip
We end this discussion with an observation about a possible alternate, and in some sense more natural, way to apply the OSSS inequality. In~\cite{duminil2017sharp}, the authors extend the OSSS inequality, in the Boolean setting, to \textit{monotonic measures} (here the correct notion of influences are \textit{covariances} between coordinates and events, see~\cite{graham2006influence} where similar influences arise). This gives hope that the OSSS inequality could be applied directly to the field $f$, rather than to the white-noise coordinates $\eta_v$. 

\medskip
To do so, one would first discretise the field by restricting it to a lattice (as in~\cite{bg_16} for instance), and view crossings of quads as paths on this lattice. One might then hope to apply the non-product OSSS inequality directly to the (finite) family of Gaussian variables $f(v)$ that determine each crossing event. However, there are at least two sources of difficulty in realising this approach: 
\begin{itemize}
\item It is not clear that the resulting measures are monotonic in the appropriate sense under the assumption $q \ge 0$; indeed we suspect that `monotonicity' requires the Gaussian field to be \textit{totally positive} (also known as `$MTP_2$'), which is true in the case of the discrete GFF \cite{rodriguez20150}, but is a much stronger assumption than the condition $q \ge 0$.
\item The influences that appear in Russo's formula seem hard to compare to the covariances that appear in the monotonic OSSS formula. 
\end{itemize}

\subsection{Outline of the proof}\label{s.outline}

The overall structure of our proof is similar to that used in previous work \cite{rv_bf}, and consists of three main steps:
\begin{enumerate}
\item (Quasi-independence) Show that crossing events on domains of scale $R$ separated by a distance $R$ are asymptotically independent as $R \to \infty$;
\item (RSW estimates) Apply Tassion's general argument \cite{tassion2014crossing} to deduce the RSW estimates at the zero level $\ell = 0$;
\item (Sharpness) Combine quasi-independence, the RSW estimates, and the OSSS inequality to deduce the existence of the sharp phase transition.
\end{enumerate}

However, we emphasise that our techniques in steps (1) and (3) are completely different from previous approaches, and only the argument in step (2) is unchanged (and borrowed directly from \cite{tassion2014crossing}).

\medskip
 To show quasi-independence, we start from the fact that, for the truncated field $f_R$, the crossing events on domains separated by distance $R$ are genuinely independent. Then we combine our control of the truncation difference $f - f_R$ with an argument based on the Cameron--Martin theorem that bounds the effect of small perturbations on monotonic events, to deduce the approximate independence of these events for $f$. This technique, inspired by the notion of influences (see Section~\ref{s.vl}), ends up being much simpler than previous approaches to showing quasi-independence based on lattice discretisation. We remark that our proof of quasi-independence requires only that $\rho(0) > 0$ (implied by $\kappa \ge 0$) and that Assumption~\ref{a.dec} holds for a given $\beta > 2$, with $\beta$ controlling how quickly the correlations between crossing events converge to zero. This part of the argument also generalises immediately to higher dimensions.
 
\medskip
 To establish the sharp phase transition we use a three-step procedure. As mentioned above, we consider the truncated discretised field $f^{\eps}_{r}$ (for well-chosen $r = r(R)$ and $\eps = \eps(R)$ that are, respectively, growing and decaying  polynomially in~$R$) and apply the OSSS inequality to the product space induced by the discrete white-noise variables $\eta_v$. The first step is then to have RSW and one-arm event estimates for~$f^{\eps}_{r}$. While this could probably be done by suitably modifying the argument from~\cite{tassion2014crossing}, since the law of $f^\eps_{r}$ is only translation invariant on a lattice (and since we want to have estimate uniform in $R$), we instead use a sprinkling procedure to deduce these estimates for $f^{\eps}_{r}$ for small levels $\ell  = \ell(R) > 0$ from the analogous RSW estimates for~$f$. The second step is the application of the OSSS inequality to $f^{\eps}_{r}$, which is done by revealing the white-noise coordinates~$\eta_v$ one-by-one following a classical algorithm that determines a crossing event (see, e.g.,~\cite{schramm2010quantitative} in the case of Bernoulli percolation). This yields a `qualitative' description of the phase transition for a fixed rectangle at large scales. The third and final step is rather standard, and consists in bootstrapping the initial `qualitative' description of the phase transition to complete the proof of the main results. However, when the covariance of the field decays less than exponentially fast, the standard bootstrap (i.e.\ as applies to Bernoulli percolation or the Bargmann-Fock field) no longer works. We overcome this difficulty by making use of a `sprinkled' version of quasi-independence.


\bigskip
\section{Gaussian techniques}
\label{s.gauss}

In this section we collect the arguments that rely on the Gaussian setting of our work. This includes our use of the Cameron--Martin theorem to control the effect of varying the level, and our use of the white-noise representation \eqref{e.wnr} to compare $f$ to truncated and discretised versions (see Section~\ref{s.overview} above). 

\medskip
We assume throughout this section (and the remainder of the paper) that Assumption \ref{a.reg} holds; whenever we need Assumptions~\ref{a.sym}--\ref{a.dec} in addition to this we will make this explicit. 

\subsection{Notation for sub-$\sigma$-algebras}
We begin by introducing notation for certain sub-$\sigma$-algebras that we use in the remainder of the paper:
\begin{defi}\label{d.sigma_al}
We write $\mathcal{F}$ for the classical $\sigma$-algebra on the set $C(\mathbb{R}^2)$ of continuous functions from $\R^2$ to $\R$, i.e.\ the $\sigma$-algebra generated by finite-dimensional projections. For any Borel set $D \subseteq \R^2$, we define the following sub-$\sigma$-algebra $\mathcal{F}_D$ of $\mathcal{F}$ generated by finite-dimensional projections on $D$ (i.e. by $u \mapsto u(x)$ for every $x \in D$). We will use several times the following important fact: if $A \in \mathcal{F}_D$ and $A$ is increasing, then for any $u \in A$ and any continuous function $v$ that satisfies $v_{|D} \geq 0$, we have $u+v \in A$. Finally, for $R > 0$ we abbreviate $\mathcal{F}_R = \mathcal{F}_{B_R}$.
\end{defi}

\subsection{Consequences of the assumptions}
\label{s.cond}

We next collect some important consequences of Assumptions \ref{a.reg} and \ref{a.swp} for the fields $f$ and $f^\eps$. Note that, for each $r \ge 1$, either the function $q_r$ defined in Section \ref{s.td} is identically equal to $0$ or it satisfies Assumptions \ref{a.reg}--\ref{a.dec} whenever $q$ does. As a result, either $f_r$ is identically equal to $0$ or these consequences apply equally to the field $f_r$.

\medskip
We begin by stating some standard facts about Gaussian fields and their derivatives:
\begin{lem}\label{l.prelim_reg}
\
\begin{enumerate}
\item Let $g$ be a centred, almost surely continuous, planar Gaussian field with covariance\footnote{The notation $K \in C^{m,m}$ means that all partial derivatives of $K$ which include at most $m$ differentiations in the first variable and $m$ differentiations in the second variable exist and are continuous.} $K\in C^{k+1,k+1}(\R^2\times\R^2)$. Then, $g$ is almost surely $C^k$, $(g,\partial^{\alpha}g)_{\alpha \, : \, |\alpha| \leq k}$ is a centred Gaussian field, and for all multi-indices $\alpha_1$, $\alpha_2$ such that $|\alpha_1| \leq k$ and $|\alpha_2| \leq k$ and for every $x,y \in \R^2$, we have
\[
\E \left[ \partial^{\alpha_1} g(x) \partial^{\alpha_2} g(y) \right] =  \partial^{\alpha_1}_x\partial^{\alpha_2}_y K(x,y)  .
\]
\item If $g$ is a centred planar Gaussian field with covariance $K\in C^{1,1}(\R^2\times\R^2)$, then there exists a modification of $g$ which is continuous.
\item Let $g$ be a centred, almost surely continuous, planar Gaussian field with covariance $K \in C^{2,2}(\R^2,\R^2)$, let $h$ be a centred planar $L^2$ field, and define $\widetilde{K}(x,y)=\E \left[ g(x) h(y) \right]$. Then, for every multi-index $\alpha$ such that $|\alpha| \leq 1$ and for every $x,y \in \R^2$, $\partial_x^\alpha \widetilde{K}(x,y)$ exists and satisfies
\[
\E \left[ \partial^\alpha g(x) \, h(y) \right] = \partial_x^\alpha \widetilde{K}(x,y)  .
\]
\end{enumerate}
\end{lem}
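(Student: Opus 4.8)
The statement to prove is Lemma \ref{l.prelim_reg}, which has three parts about Gaussian fields and their derivatives. Let me sketch a proof plan.

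\medskip

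The plan is to reduce everything to a single principle: if a sequence of (centred, jointly) Gaussian random vectors converges in $L^2$, then the limit is Gaussian and all first and second moments pass to the limit; combined with a Kolmogorov-type continuity criterion for the existence of continuous modifications. For part (2), the standard route is to apply the Kolmogorov--Chentsov continuity theorem: using stationarity-free bounds, $\E[(f(x)-f(y))^2] = K(x,x) - 2K(x,y) + K(y,y)$, and since $K \in C^{1,1}$ a second-order Taylor expansion in a neighbourhood gives $\E[(f(x)-f(y))^2] = O(|x-y|^2)$ locally uniformly; hence $\E[(f(x)-f(y))^{2n}] = c_n (\E[(f(x)-f(y))^2])^n = O(|x-y|^{2n})$ by Gaussian moment identities, and taking $n$ large enough (e.g.\ $n \ge 2$) the Kolmogorov criterion yields a continuous (indeed locally Hölder) modification on each ball $B_k$, and these patch together consistently. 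I would isolate this as the first thing to establish since parts (1) and (3) will invoke it.

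\medskip

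For part (1), I would argue by induction on $k$, or more directly for a fixed $k$, by showing that the difference quotients $D_i^h f(x) := (f(x + h e_i) - f(x))/h$ converge in $L^2$ (and jointly with the analogous quotients in all coordinates and all lower-order derivatives already constructed) as $h \to 0$. The key computation: $\E[D_i^h f(x) \, D_j^{h'} f(y)]$ is a double difference quotient of $K$, which converges to $\partial_{x_i}\partial_{y_j} K(x,y)$ as $h, h' \to 0$ precisely because $K \in C^{k+1,k+1}$ guarantees the relevant partials of $K$ exist and are continuous — so the Cauchy criterion in $L^2$ is met, the limit (call it the partial derivative process) is jointly Gaussian with $f$ and its other derivatives, and its covariance is $\partial^{\alpha_1}\partial^{\alpha_2} K(x,y)$. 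One then checks that this $L^2$-limit process is a.s.\ equal to the pathwise partial derivative of the continuous modification of $f$; this uses that $\partial^\alpha f$ (as an $L^2$-limit field) has covariance $\partial^\alpha \partial^\alpha K \in C^{1,1}$, hence by part (2) has a continuous modification, and a Fubini/dominated-convergence argument (or the fundamental theorem of calculus applied to the continuous modification along each line) identifies it with the genuine pathwise derivative. Iterating up to order $k$ gives the claim, with $K \in C^{k+1,k+1}$ being exactly what is needed to run the induction $k$ times and still have a $C^{1,1}$ covariance at the top level to invoke part (2).

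\medskip

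Part (3) is the same argument but for a ``mixed'' covariance: form the difference quotient $D_i^h f(x)$ and compute $\E[D_i^h f(x)\, g(y)] = (\widetilde K(x + h e_i, y) - \widetilde K(x,y))/h$. Since $f$ has covariance $K \in C^{1,1}$, the quotients $D_i^h f(x)$ already converge in $L^2$ to $\partial_i f(x)$ by part (1) (applied with $k = 1$); hence $\E[D_i^h f(x)\, g(y)] \to \E[\partial_i f(x)\, g(y)]$ automatically, while the left-hand side is by definition a difference quotient of $\widetilde K(\cdot, y)$. Therefore $\partial_{x_i}\widetilde K(x,y)$ exists (as the limit of these quotients) and equals $\E[\partial^\alpha f(x) g(y)]$; no regularity of $g$'s own covariance or of $\widetilde K$ is needed beyond what follows. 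I expect the main obstacle — really the only non-formal point — to be the identification in part (1) of the $L^2$-limit process with the true pathwise derivative of the continuous modification (as opposed to merely ``a'' derivative in an $L^2$ sense); handling this cleanly requires invoking the continuous modification from part (2) for the derivative field and then an integration-along-lines argument, so the logical order matters: establish (2) first, then use it inside the proof of (1), then deduce (3) from (1).
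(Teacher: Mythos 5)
Your proposal is correct in substance, and it takes a more self-contained route than the paper does: the paper simply cites Appendices A.3 and A.9 of Nazarov--Sodin for parts~(1) and~(2), and for part~(3) invokes the fact that a.s.\ convergence of (jointly) Gaussian variables implies $L^2$ convergence, so one can move the a.s.\ limit through the expectation. You instead reconstruct (2) from the Kolmogorov--Chentsov criterion (correctly using the double-difference bound on $K$ and the Gaussian moment identity $\E[Z^{2n}]=c_n(\E[Z^2])^n$), reconstruct (1) via $L^2$ Cauchy-ness of difference quotients plus the integration-along-lines identification of the $L^2$-limit with the pathwise derivative, and then derive (3) from the $L^2$ convergence of $D^h_i f(x)$. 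Both routes are standard; yours buys self-containment at the cost of length, and it avoids needing the ``a.s.\ convergence $\Leftrightarrow L^2$ convergence for Gaussians'' fact, which itself requires a small argument.

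One small inaccuracy in part~(3): you justify $L^2$ convergence of the quotients $D^h_i f(x)$ by ``part (1) applied with $k=1$,'' but part~(1) with $k=1$ assumes $K\in C^{2,2}$, whereas part~(3) only assumes $K\in C^{1,1}$. The fix is cheap and already contained in your own computation: the $L^2$ Cauchy criterion for the quotients reduces to the convergence of the double difference quotient
\[
\frac{K(x+he_i,\,x+h'e_i)-K(x+he_i,\,x)-K(x,\,x+h'e_i)+K(x,x)}{hh'}\ \longrightarrow\ \partial_{x_i}\partial_{y_i}K(x,x),
\]
which only uses continuity of $\partial_{x_i}\partial_{y_j}K$, i.e.\ $K\in C^{1,1}$. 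So you should cite this direct computation rather than part~(1). (Strictly speaking this gives mean-square differentiability; if one insists on the pathwise derivative $\partial^\alpha f$, a bit more is needed, as the paper's own proof also implicitly assumes by writing $\partial^{(1,0)}f$ as an a.s.\ limit. In the paper's application $K$ is in fact $C^{3,3}$, so this is harmless.)
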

\begin{proof}
The first and second statements can be found in Appendices A.3 and A.9 of~\cite{nazarov2015asymptotic}. For the last statement, note that the first statement implies that $g$ is $C^1$. Moreover, we have:
\begin{align*}
\E \left[ \partial^{(1,0)} g(x) \, h(y) \right] & =  \E \left[ \lim_{a \rightarrow 0} \frac{g(x_1+a,x_2)-g(x)}{a} h(y) \right]\\
&=  \lim_{a \rightarrow 0} \E \left[ \frac{g(x_1+a,x_2)-g(x)}{a} h(y) \right]\\
& =  \lim_{a \rightarrow 0} \frac{\widetilde{K}((x_1+a,x_2),y)-\widetilde{K}(x,y)}{a}  .
\end{align*}
The second inequality comes from the fact that: i) the almost sure convergence of the Gaussian variables $\frac{g(x_1+a,x_2)-g(x)}{a}$ is equivalent to the $L^2$ convergence, and ii) $h(y)$ is $L^2$. This completes the proof in the case $\alpha=(1,0)$, with the other case identical.
\end{proof}

We next use Lemma~\ref{l.prelim_reg} to deduce some consequences of Assumption \ref{a.reg}. Let us stress that, thanks to this assumption, we can use dominated convergence in order to exchange derivatives and convolution of $\kappa=q \star q$ for derivatives of order at most $3$, a fact we use below without mentioning it explicitly.

\medskip
For each quad $Q = (D; \gamma, \gamma')$, let $Q^\ast$ denote the quad with domain $D$ and boundary arcs $\nu, \nu'$ defined so that $\nu \cup \nu' =  \partial D \setminus \overline{\gamma \cup \gamma'}$. For instance, if $D$ is a rectangle and $\gamma$ and $\gamma'$ are the `left' and `right' edges, then $\nu$ and $\nu'$ consist of the `bottom' and `top' edges. Let $\cross_{\ell}^\ast(Q)$ denote the crossing event for the quad $Q^\ast$ and the set $\mathcal{E}_\ell^c$, i.e.\ the event that there is a connected component of $\mathcal{E}_\ell^c$ whose intersection with $Q^\ast$ intersects both $\nu$ and $\nu'$.

\begin{proposition}
\label{p.areg}
\
\begin{enumerate}
\item Fix $\eps> 0$. Then there exist continuous modifications of $f = q \star W$ and $f^\eps = q \star W^\eps$. Moreover, $f$ and $f^\eps$ are almost surely $C^2$ and, for every multi-index $\alpha$ such that $|\alpha| \le 1$,
\begin{equation}
\label{e.intconder}
  \partial^\alpha f = (\partial^\alpha q) \star W  \quad \text{and} \quad   \partial^\alpha f^\eps = (\partial^\alpha q) \star W^\eps .
  \end{equation}
\item The field $f$ is non-degenerate, meaning that, for each $k \in \mathbb{N}$ and distinct points $x_1, \ldots , x_k \in \mathbb{R}^2$, $(f(x_1), \ldots , f(x_k))$ is a non-degenerate Gaussian vector. 
\item For each $\ell \in \mathbb{R}$, the level set $\mathcal{L}_\ell$ almost surely consists of a collection of simple curves. Moreover, for each $\ell \in \mathbb{R}$ and quad $Q$, almost surely
\[
  \{ f \in \cross_{\ell}(Q)  \} = \{  f \notin \cross_\ell^\ast(Q)  \} . \] 
\end{enumerate}
\end{proposition}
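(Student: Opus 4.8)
\textbf{Proof plan for Proposition \ref{p.areg}.}

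The three bullets are of increasing delicacy, so I would dispatch them in order. For the first bullet, the continuous modifications of $f$ and $f^\eps$ come from Lemma \ref{l.prelim_reg}(2) once we know their covariances are $C^{1,1}$: for $f$ the covariance is $\kappa = q \star q$, which is $C^6$ by Assumption \ref{a.reg} (three derivatives can be passed onto each factor by dominated convergence); for $f^\eps = q \star W^\eps$ the covariance is $\E[f^\eps(x) f^\eps(y)] = \eps^{-2} \sum_{v \in \eps\Z^2} \big(\int_{v + [-\eps/2,\eps/2]^2} q(x-u)\,du\big)\big(\int_{v + [-\eps/2,\eps/2]^2} q(y-u)\,du\big)$, which inherits the required smoothness in $x,y$ from the smoothness of $q$ (differentiation under the sum/integral is justified by the decay bounds in Assumption \ref{a.reg}, which make the differentiated series locally uniformly summable). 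The $C^2$ regularity and the commutation formula \eqref{e.intconder} then follow from Lemma \ref{l.prelim_reg}(1),(3): apply (3) with $g$ a directional-derivative difference quotient to identify $\E[\partial^\alpha f(x) g(y)]$, and use that $L^2$-convergence of difference quotients of $q \star W$ is exactly $(\partial^\alpha q) \star W$ since $\partial^\alpha q \in L^2$ (again from Assumption \ref{a.reg}), and similarly for $f^\eps$.

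For the second bullet, non-degeneracy of $(f(x_1), \dots, f(x_k))$ for distinct $x_i$: suppose some nontrivial linear combination $\sum a_i f(x_i)$ has zero variance. Writing $f = q \star W$, this variance equals $\|\sum a_i q(x_i - \cdot)\|_{L^2}^2$, so we would need $\sum a_i q(x_i - u) = 0$ for a.e. $u$. Taking Fourier transforms, $\big(\sum a_i e^{2\pi i \langle x_i, s\rangle}\big)\rho(s) = 0$ a.e., and since $\rho$ has support containing an open set (Assumption \ref{a.reg}) while $s \mapsto \sum a_i e^{2\pi i \langle x_i, s\rangle}$ is a nonzero real-analytic function (the $x_i$ being distinct), the product cannot vanish a.e. — contradiction. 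This is the cleanest route and avoids any appeal to the spectral density being strictly positive.

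The third bullet is where the real work lies, and I expect it to be the main obstacle. That $\mathcal{L}_\ell$ is a.s. a union of simple curves follows from the implicit function theorem once we know that a.s. $f$ has no critical points on $\mathcal{L}_\ell$, i.e. $(f + \ell, \nabla f)$ never simultaneously vanishes; this is a standard Bulinskaya-type argument using that $(f(x), \nabla f(x))$ is a non-degenerate Gaussian vector with a bounded density (from the first two bullets and Lemma \ref{l.prelim_reg}), so the expected number of such points in any bounded region is zero. The duality identity $\{f \in \cross_\ell(Q)\} = \{f \notin \cross_\ell^\ast(Q)\}$ is a topological (Jordan-curve / planar duality) statement: on the event that $\mathcal{E}_\ell$ does not cross $Q$ from $\gamma$ to $\gamma'$, one must produce a connected component of $\mathcal{E}_\ell^c = \{f < -\ell\}$ inside $Q$ separating $\gamma$ from $\gamma'$, and vice versa. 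The subtlety — and the reason this needs the regularity hypotheses rather than being purely combinatorial as on a lattice — is that $\mathcal{E}_\ell$ and $\mathcal{E}_\ell^c$ are not complementary as \emph{closed} sets (they share the level set $\mathcal{L}_\ell$), so one must argue on the good event (probability one, by the no-critical-points statement above, possibly also ruling out tangencies of $\mathcal{L}_\ell$ with $\partial D$ and with $\overline{\gamma \cup \gamma'}$ by the same Bulinskaya argument applied to the one-dimensional boundary curves) that $\mathcal{L}_\ell \cap D$ is a finite disjoint union of smooth arcs and loops meeting $\partial D$ transversally, and then invoke a deterministic planar topology lemma for such configurations. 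I would isolate that deterministic lemma and prove the identity by a standard connectedness/separation argument for the open sets $\{f > -\ell\}$ and $\{f < -\ell\}$ relative to the quad.
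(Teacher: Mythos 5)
Your treatment of the first bullet is essentially identical to the paper's: smoothness of the covariance via Assumption \ref{a.reg}, then Lemma \ref{l.prelim_reg} parts (1)--(3) to get the continuous modification, $C^2$ regularity, and the commutation \eqref{e.intconder} by verifying $\E[(\partial^\alpha f - (\partial^\alpha q)\star W)^2]=0$. For the second and third bullets the paper simply cites outside references (Wendland's Theorem 6.8 for non-degeneracy, and \cite{adta_rfg} or \cite[Lemma A.9]{rv_rsw} for the level-set/duality statement), so there your proposal goes further by sketching direct arguments. Your Fourier argument for non-degeneracy is correct and clean: the variance of $\sum a_i f(x_i)$ is $\|\sum a_i q(x_i - \cdot)\|_{L^2}^2$, whose Fourier transform is $\big(\sum a_i e^{-2\pi i \langle x_i, s\rangle}\big)\rho(s)$, and a nonzero trigonometric polynomial has a zero set of Lebesgue measure zero, forcing $\rho = 0$ a.e., a contradiction — note that this argument only needs $\rho \not\equiv 0$, slightly weaker than the paper's hypothesis that $\text{supp}(\rho)$ contains an open set (which is what the cited theorem requires). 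Your outline for the third bullet correctly identifies the two ingredients — a Bulinskaya-type argument to rule out critical points on $\mathcal{L}_\ell$ and tangencies with $\partial D$, and a deterministic planar-topology lemma for the resulting configuration of transversal arcs — which is precisely what the cited references supply. One small point you should make explicit in a full write-up: the Bulinskaya step requires non-degeneracy of the vector $(f(x), \nabla f(x))$, which does not follow directly from the second bullet (that concerns $(f(x_1),\ldots,f(x_k))$); you get it from stationarity (so $f(x) \perp \nabla f(x)$) together with the fact that $\text{supp}(\mu)$ is not contained in a line through the origin, again a consequence of $\text{supp}(\rho)$ containing an open set.
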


\begin{proof}
\textbf{(1).} We first consider $f = q \star W$. Since $q \in L^2$, and by the definition of the white-noise~$W$, $q \star W$ is a stationary planar Gaussian field with covariance $\kappa = q \star q$. Since Assumption~\ref{a.reg} implies that $\kappa$ is $C^6$ (i.e.\ the covariance of $f$ is $C^{3,3}$), the first two statements of Lemma \ref{l.prelim_reg} guarantee the existence of a continuous modification of $f$ that is almost surely $C^2$. Concerning~\eqref{e.intconder}, Assumption~\ref{a.reg} ensures that $\partial^\alpha q \in L^2$, and so $(\partial^\alpha q) \star W$ is well-defined. Then we use dominated convergence (to exchange derivatives and convolution), the definition of $W$, and the first and third statements of Lemma~\ref{l.prelim_reg} to verify that $\E \left[ (\partial^\alpha f - (\partial^\alpha q) \star W )^2 \right] = 0$. To be more precise, by the first statement of Lemma~\ref{l.prelim_reg} and by the definition $W$,
\begin{align*}
& \E \left[ \big(\partial^\alpha f(x) -  ((\partial^\alpha q) \star W)(x)  \big)^2 \right]\\
& = \partial^{2\alpha} \kappa(0) -2 \E \left[ \partial^\alpha f(x) \, ((\partial^\alpha q) \star W) (x) \right] + (\partial^\alpha q )\star (\partial^\alpha q) (0)\\
& = (\partial^\alpha q )\star (\partial^\alpha q) (0) -2 \E \left[ \partial^\alpha f(x) \, ((\partial^\alpha q) \star W) (x) \right] + (\partial^\alpha q )\star (\partial^\alpha q) (0) .
\end{align*}
Moreover, by the third statement of Lemma~\ref{l.prelim_reg},
\[
\E \left[ \partial^\alpha f(x) \,  (\partial^\alpha q) \star W (x) \right] = \partial^\alpha_x \left( q \star (\partial^\alpha q) (x-y) \right)_{|x=y} = (\partial^\alpha q )\star (\partial^\alpha q) (0)  .
\]
As a result, $\E \left[ (\partial^\alpha f(x) - (\partial^\alpha q) \star W (x) )^2 \right]=0$ for every $x$. Hence, by considering a continuous modification of $(\partial^\alpha q) \star W$, we have \eqref{e.intconder}.

\medskip
Turning to $f^\eps = q \star W^\eps$, Assumption~\ref{a.reg} ensures that both~$q$ and $\partial^\alpha q$, restricted to the lattice~$\eps \mathbb{Z}^2$, are square-summable, which ensures that $f^\eps = q \star W^\eps$ and $\partial^\alpha q \star W^\eps$ are well defined stationary Gaussian fields. The remainder of the proof is essentially the same as in the first case.

\medskip
\textbf{(2).} The non-degeneracy of $f$ follows from the fact that the support of $\rho$ (and hence also the support of the spectral measure $\mu$) contains an open set \cite[Theorem 6.8]{wendland05}.

\medskip
\textbf{(3).} For this, we refer to \cite{adta_rfg} or to Lemma~A.9 of \cite{rv_rsw}.
\end{proof}

To finish, we state some consequences of Assumption \ref{a.swp}.

\begin{proposition}
\label{p.aswp}
The strong positivity condition in Assumption \ref{a.swp} implies the weak positivity condition $\kappa \ge 0$. In turn, the weak positivity condition is equivalent to the \textit{FKG inequality} for finite-dimensional projections of the field $f$, i.e., for every $k \in \mathbb{N}$, every $\{x_1, \ldots, x_k\} \subset \mathbb{R}^2$ and every increasing Borel sets $A,B \subseteq \R^n$:
\begin{equation} \label{e.fkg}
  \mathbb{P}[f|_{x_1, \ldots, x_k}  \in A \cap B] \ge \mathbb{P}[f|_{x_1, \ldots, x_k}  \in A] \, \mathbb{P}[f|_{x_1, \ldots, x_k}  \in B]  .
  \end{equation}
Moreover, the weak positivity condition implies that $\rho(0) > 0$, and if we assume furthermore that Assumption~\ref{a.dec} holds for some $\beta > 2$, then there exists a neighbourhood $V$ of $0$ such that $\inf_V|\rho|  > 0$.
\end{proposition}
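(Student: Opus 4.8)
The plan is to prove Proposition~\ref{p.aswp} in three parts, matching its three assertions.

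First, the implication ``strong positivity $\Rightarrow$ weak positivity'': if $q \ge 0$ then $\kappa = q \star q = \int q(y)\, q(\cdot - y)\, dy \ge 0$ since it is an integral of products of non-negative functions. This is immediate.

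Second, the equivalence between $\kappa \ge 0$ and the FKG inequality \eqref{e.fkg} for finite-dimensional projections. Given $\{x_1,\dots,x_k\}$, the vector $(f(x_1),\dots,f(x_k))$ is a centred Gaussian vector with covariance matrix $\Sigma_{ij} = \kappa(x_i - x_j)$. By the non-degeneracy established in Proposition~\ref{p.areg}, $\Sigma$ is positive-definite, and by the classical Pitt criterion (see Pitt, \emph{Positively correlated normal variables are associated}), a non-degenerate Gaussian vector satisfies the FKG (positive association) inequality for all pairs of increasing events if and only if all off-diagonal entries of $\Sigma$ are non-negative, i.e.\ if and only if $\kappa(x_i - x_j) \ge 0$ for all $i,j$. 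Since this must hold for \emph{every} finite collection of points, and since $\kappa$ is continuous, requiring FKG for all finite-dimensional projections is equivalent to $\kappa \ge 0$ everywhere. I would cite Pitt's theorem for the hard direction; the easy direction (FKG $\Rightarrow$ $\kappa \ge 0$) follows by taking $k = 2$ and $A, B$ half-lines, since positive correlation of $f(x_1)$ and $f(x_2)$ forces $\kappa(x_1 - x_2) \ge 0$.

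Third, the spectral statement: $\kappa \ge 0$ implies $\rho(0) \ne 0$, and if additionally Assumption~\ref{a.dec} holds for some $\beta > 2$ then $\inf_V |\rho| > 0$ on a neighbourhood $V$ of $0$. For the first claim, recall $\kappa = \mathcal{F}[\rho^2]$, so $\rho^2$ is (up to normalisation) the inverse Fourier transform of $\kappa$; since $\rho \in L^2$ we have $\rho^2 \in L^1$, and $\rho(0)^2 = \rho^2(0)$ is — heuristically — proportional to $\int \kappa(x)\, dx$... but $\kappa$ need not be integrable under only $\kappa \ge 0$, so instead I would argue as follows. The value $\rho^2(0)$ is well-defined because, under Assumption~\ref{a.dec} with $\beta > 2$ (which by the discussion after Assumption~\ref{a.dec} makes $\rho = \mathcal{F}[q]$ continuous and not identically zero) — wait, for the \emph{first} claim we only have $\kappa \ge 0$ without decay, so $\rho^2$ is merely an $L^1$ density defined a.e. In that generality ``$\rho(0) \ne 0$'' should be read as ``$\rho^2$ does not vanish in a neighbourhood of $0$ in the sense that $0$ is in the support'', or one fixes a good representative; I would follow the route used elsewhere in such arguments: $\rho^2 \ge 0$ and $\int \rho^2 = \kappa(0) > 0$, and since $\kappa = q \star q = \mathcal{F}[\rho^2]$ with $\kappa(0) = \|q\|_{L^2}^2 = \int \rho^2$, positivity of $\kappa$ forces $0$ to lie in the support of $\rho^2$, because if $\rho^2$ vanished near $0$ then $\int \kappa = \widehat{\rho^2}(0)$ read off appropriately would fail to be maximal — more robustly, $\kappa(0) = \int d\mu \le $ nothing forces it, so the cleanest argument is: $q \ge 0$ is \emph{not} assumed here, only $\kappa \ge 0$; but $\rho = \mathcal{F}[q]$, hence $\rho(0) = \int q(x)\, dx$ whenever $q \in L^1$, which under Assumption~\ref{a.dec} with $\beta > 2$ \emph{does} hold (as $|q(x)| \le c|x|^{-\beta}$ with $\beta > 2$ gives $q \in L^1(\mathbb{R}^2)$). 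Then $\rho(0) = \int q$, and $\rho(0)^2 = \left(\int q\right)^2$; meanwhile $\kappa$ weakly positive and $\kappa = q \star q$ forces $\int q \ne 0$: indeed $\left(\int q\right)^2 = \int (q\star q) = \int \kappa$ whenever $\kappa \in L^1$, and more generally $\widehat{\kappa}(0) = \rho^2(0) = \rho(0)^2$ in a limiting sense; if $\rho(0) = \int q = 0$ then, since $\kappa = q\star q$ would then have vanishing ``total mass'' while being $\ge 0$ and continuous with $\kappa(0) = \|q\|_2^2 > 0$, we would need $\int \kappa = 0$ with $\kappa \ge 0$, $\kappa \not\equiv 0$ — contradiction once $\kappa \in L^1$. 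For the non-integrable case under only $\kappa \ge 0$ I would argue via approximation: $\rho^2$ is continuous at $0$ under Assumption~\ref{a.dec} ($\beta > 2$ makes $\kappa \in L^1$, so $\rho^2 = \widehat{\kappa}$ up to constants is continuous, even uniformly continuous), so $\rho^2(0) > 0$ combined with continuity yields $\inf_V \rho^2 > 0$ on a neighbourhood $V$ of $0$, i.e.\ $\inf_V |\rho| > 0$; and the bare statement $\rho(0) \ne 0$ under only $\kappa \ge 0$ I would phrase and prove exactly as in \cite{rv_bf}, namely that $\rho^2$ as an $L^1$ density has $0$ in its support because $\kappa$ being a non-negative non-trivial positive-definite function forces its Fourier transform not to vanish near the origin (a standard fact: if $g \ge 0$, $g \not\equiv 0$, and $\widehat{g}$ vanishes on a neighbourhood of $0$, then splitting $g = g_+ $ leads to $\int g \cdot \phi = 0$ for suitable bump $\phi$ with $\widehat{\phi}$ supported near $0$, contradicting $g \ge 0$, $g \not\equiv 0$).

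The main obstacle is the third part, and specifically the bare assertion ``$\rho(0) \ne 0$'' under only weak positivity (no decay), since then $\rho^2$ is only defined almost everywhere and continuity of $\widehat{\kappa}$ at $0$ is not automatic. I expect the resolution to be the soft Fourier-analytic lemma just sketched (a non-negative, non-trivial function's Fourier transform cannot vanish on any neighbourhood of the origin), applied to $\rho^2$; and then the quantitative $\inf_V|\rho|>0$ is a routine consequence of the added decay hypothesis, which upgrades $\kappa$ to $L^1$ and hence makes $\rho^2 = c\,\widehat{\kappa}$ continuous, so that non-vanishing at $0$ propagates to a neighbourhood. The first two parts (strong $\Rightarrow$ weak positivity, and weak positivity $\Leftrightarrow$ FKG) are standard and will be handled by a direct computation and an appeal to Pitt's theorem respectively.
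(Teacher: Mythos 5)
Your proposal is correct and follows essentially the same route as the paper: strong $\Rightarrow$ weak positivity is the trivial observation that $q \ge 0$ gives $q \star q \ge 0$; the FKG equivalence is exactly Pitt's theorem; and the spectral statement is a Fourier-positivity argument. Two small points of divergence are worth flagging. First, you are more careful than the paper itself about the meaning of ``$\rho(0) \neq 0$'' when only weak positivity is assumed: the paper simply writes ``since $\rho^2 = \mathcal{F}[\kappa]$ and $\kappa \ge 0$, $\kappa \not\equiv 0$, we have $\rho^2(0) > 0$'', which silently treats $\rho^2$ as pointwise defined at $0$; your observation that in general $\rho^2$ is only an a.e.-defined $L^1$ density, and that the claim should be read via the soft lemma (a non-negative, non-trivial continuous positive-definite function cannot have spectral measure vanishing near the origin), is the honest way to make this precise. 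Second, for $\inf_V |\rho| > 0$ under $\beta > 2$ you pass through continuity of $\rho^2 = \mathcal{F}[\kappa]$ via $\kappa \in L^1$, whereas the paper goes more directly via continuity of $\rho = \mathcal{F}[q]$ from $q \in L^1$ (which is immediate from $|q(x)| \lesssim |x|^{-\beta}$, $\beta > 2$); both work, but the paper's route avoids having to first check $\kappa \in L^1$.
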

\begin{proof}
Clearly $q \ge 0$ implies that $\kappa = q \star q \ge 0$. Moreover, \eqref{e.fkg} is well-known to be equivalent to $\kappa \ge 0$ by a result of Pitt \cite{pitt1982positively}. Finally, since $\rho^2 = \mathcal{F}[\kappa]$ and $\kappa$ is non-negative and not identically zero, $\rho^2(0) > 0$. If we assume furthermore that $\beta > 2$, then $\rho$ is continuous (by dominated convergence), which gives the last property.
\end{proof}

\begin{remark}
By approximation arguments (see, e.g., Appendix A.2 of \cite{rv_rsw}), \eqref{e.fkg} implies the positive association of each of the compactly-supported increasing events that we consider in this paper, i.e.\ for every compact $D \subset \mathbb{R}^2$ and every increasing $A, B \in \mathcal{F}_D$ that is mentioned in this paper,
\begin{equation} \label{e.fkg2}
 \mathbb{P}[f \in A \cap B] \ge \mathbb{P}[f \in A] \mathbb{P}[f  \in B] . 
 \end{equation}
While we actually believe that \eqref{e.fkg} implies \eqref{e.fkg2} for \textit{all} compactly supported increasing events, we are unaware of any such statement in the literature.
\end{remark}

\subsection{Varying the level via the Cameron--Martin theorem}
\label{s.vl}

In this section we show how to control the effect of varying the level on monotonic events, in particular giving a bound in terms of the spectral density $\rho$ contained in a small annulus centred at the origin. For each $0 < r_1 < r_2$, let $\ann_{r_1, r_2}$ denote the Euclidean annulus centred at the origin with radii $r_1$ and~$r_2$.

\begin{prop}\label{p.cameron_martin}
There exist absolute constants $c_1, c_2  > 0$ such that, for each $R > 0$, monotonic event $A \in \mathcal{F}_R$, and $t \in \R$,
\[ \left| \Pro \left[ f \in A   ]  - \mathbb{P}[ f - t \in A \right] \right| \leq  \frac{c_1  R \, |t|  }{      \inf\{ |\rho(x)| : x \in  \ann_{c_2/R, 2c_2/R} \} } .\]
In particular, if $\gamma \ge 0$ is such that $\rho(x) > c_3 |x|^{\gamma}$ for a constant $c_3 > 0$ and sufficiently small $|x|$, then there exist $c, R_0 > 0$ such that, for every $R > R_0$, monotonic event $A \in \mathcal{F}_R$, and $t \in \R$,
\[ \left|  \Pro \left[  f \in A  ]  - \mathbb{P}[ f - t \in A  \right] \right|  \le  c R^{1+\gamma} |t|  .\]
\end{prop}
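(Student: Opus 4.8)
The plan is to use the white-noise representation $f = q \star W$ together with the Cameron--Martin theorem applied to the Gaussian measure of $W$ (equivalently, of $f$) on $C(\R^2)$. Shifting the field by a constant $t$ on the ball $B_R$ — or rather, shifting it by a smooth function $h_R$ that equals $t$ on $B_R$ and has small Cameron--Martin norm — is a translation in the Cameron--Martin space $\mathcal H$ of $f$, and the Radon--Nikodym derivative of the shifted measure is the usual exponential martingale. The key point is that for a \emph{monotonic} event $A \in \mathcal F_R$, only the value of $h_R$ on $B_R$ matters, so we have freedom to choose $h_R$ outside $B_R$ to minimise $\|h_R\|_{\mathcal H}$, and we must estimate how small this norm can be made.

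First I would recall that the Cameron--Martin space of $f = q\star W$ consists of functions of the form $h = q \star g$ with $g \in L^2(\R^2)$, and $\|h\|_{\mathcal H} = \inf\{\|g\|_{L^2} : h = q\star g\}$; on the Fourier side this is $\|\widehat h / \rho\|_{L^2}$ (up to constants, using $\rho = \mathcal F[q]$), with the convention that this is $+\infty$ if $\widehat h$ is not supported where $\rho \ne 0$. So the task reduces to: \emph{construct a function $h_R$ on $\R^2$ with $h_R \equiv t$ on $B_R$ and $\|\widehat{h_R}/\rho\|_{L^2}$ as small as possible.} Here is where the annulus $\ann_{c_2/R,\, 2c_2/R}$ enters: I would take $h_R = t\, \psi(\cdot/R)$ for a fixed smooth compactly-supported bump $\psi$ with $\psi \equiv 1$ on $B_1$, so that $\widehat{h_R}(s) = t R^2 \widehat\psi(Rs)$ is concentrated on $|s| \lesssim 1/R$, i.e.\ essentially on the annulus of radii comparable to $1/R$. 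A direct computation gives $\|\widehat{h_R}\|_{L^2}^2 = t^2 R^2 \|\widehat\psi\|_{L^2}^2$, and dividing pointwise by $\rho$ costs at most a factor $1/\inf\{|\rho(x)| : |x| \asymp 1/R\}$; choosing $\psi$ with $\widehat\psi$ supported in a fixed annulus (a Littlewood--Paley type bump) makes this rigorous and produces exactly the annulus $\ann_{c_2/R, 2c_2/R}$ for an absolute constant $c_2$. Hence $\|h_R\|_{\mathcal H} \le c_1 R\,|t| / \inf\{|\rho(x)| : x \in \ann_{c_2/R,2c_2/R}\}$.

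Next I would turn the Cameron--Martin shift into the claimed bound. By translation of the Gaussian measure, $\Pro[f - h_R \in A] = \E[\id_{\{f \in A\}} e^{\langle h_R, \cdot\rangle_{\mathcal H} - \frac12\|h_R\|_{\mathcal H}^2}]$ (the linear functional $\langle h_R,\cdot\rangle_{\mathcal H}$ being the Paley--Wiener integral, a centred Gaussian of variance $\|h_R\|_{\mathcal H}^2$), and since $A \in \mathcal F_R$ and $h_R \equiv t$ on $B_R$, the left side equals $\Pro[f - t \in A]$. I would then write
\[
\bigl|\Pro[f \in A] - \Pro[f - t \in A]\bigr| = \bigl|\E\bigl[\id_{\{f\in A\}}(1 - Z)\bigr]\bigr| \le \sqrt{\Pro[f \in A]}\;\sqrt{\E[(1-Z)^2]},
\]
where $Z$ is the Radon--Nikodym density, by Cauchy--Schwarz. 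Finally $\E[(1-Z)^2] = \E[Z^2] - 1 = e^{\|h_R\|_{\mathcal H}^2} - 1$ by the standard Gaussian Laplace-transform computation, and since $e^{x^2}-1 \le c\, x^2$ whenever $x$ is bounded (say $\|h_R\|_{\mathcal H} \le 1$, which holds for $t$ bounded and $R$ large in the regime of interest, and can be arranged in general by a subdivision / triangle-inequality argument splitting $t$ into finitely many small steps), we get $\sqrt{\E[(1-Z)^2]} \le c\,\|h_R\|_{\mathcal H}$, giving the first displayed inequality. The second displayed inequality is then immediate: if $\rho(x) > c_3|x|^\gamma$ for small $|x|$, then $\inf\{|\rho(x)| : x \in \ann_{c_2/R,2c_2/R}\} \ge c_3' R^{-\gamma}$ for $R$ large, and substituting this into the first bound yields $c R^{1+\gamma}|t|\sqrt{\Pro[f\in A]}$.

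The main obstacle I anticipate is the bookkeeping around the factor $e^{\|h_R\|^2_{\mathcal H}} - 1$: for the clean linear-in-$|t|$ bound one genuinely needs $\|h_R\|_{\mathcal H}$ to be bounded, so for large $|t|$ (or small $R$, where the statement is vacuous anyway because the right-hand side exceeds $1$) one has to either restrict to the regime where the bound is non-trivial or iterate the estimate over $O(|t|/\|h\|_{\mathcal H} + 1)$ intermediate levels using the triangle inequality and monotonicity of $A$ in the level; this is routine but must be done carefully to keep the constants absolute. A secondary technical point is justifying that $h_R = t\psi(\cdot/R)$ really lies in $\mathcal H$ — i.e.\ that $\widehat{h_R}/\rho \in L^2$ — which requires knowing $\rho$ does not vanish on the relevant annulus; but that is exactly the hypothesis (the denominator is assumed positive, otherwise the bound is trivially $+\infty$), and Assumption~\ref{a.reg} plus continuity of $\rho$ handle the integrability near the support of $\widehat\psi$.
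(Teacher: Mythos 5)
Your overall framework is the same as the paper's: Cameron--Martin for the Gaussian measure of $f$, Cauchy--Schwarz against the indicator $\id_{\{f\in A\}}$, the computation $\E[(1-Z)^2]=e^{\|h\|_H^2}-1$, and then the spectral (Fourier-side) description of the Cameron--Martin norm $\|h\|_H^2=\int|\widehat h|^2/\rho^2$ to bound the norm of a well-chosen $h$ supported near frequency $\asymp 1/R$. However, there is a genuine gap in your construction of $h_R$, and it is precisely the place where monotonicity needs to be used (you invoke monotonicity but for a step that in fact only requires $A\in\mathcal F_R$).

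You want $h_R = t\psi(\cdot/R)$ with $\psi$ \emph{smooth, compactly supported, and} $\psi\equiv 1$ \emph{on} $B_1$, and at the same time you want $\widehat\psi$ supported in a fixed annulus ``a Littlewood--Paley type bump''. These are incompatible. If $\widehat\psi$ is supported in an annulus away from $0$, then $\int\psi=\widehat\psi(0)=0$, so $\psi$ cannot equal $1$ on $B_1$ (and is certainly not compactly supported). Conversely, if $\psi$ is compactly supported with $\psi\equiv 1$ on $B_1$, then $\widehat\psi$ is entire and has unbounded support, so in $\|h_R\|_H^2=\int|\widehat{h_R}(s)|^2/\rho^2(s)\,ds$ the integrand is nonzero at frequencies far from $1/R$ where $\rho$ may be tiny or vanish entirely (the paper only assumes $\mathrm{supp}(\rho)$ contains an open set, not that $\rho$ has full support). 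In particular, if $\rho$ is compactly supported (a band-limited field) your $h_R$ simply is not in $H$. A related sign that something is off: $\widehat\psi$ for a nonnegative bump is concentrated on a \emph{ball} around $0$, not an \emph{annulus}; the annulus $\ann_{c_2/R,2c_2/R}$ in the statement of the proposition is not an artifact, it comes from genuinely excising the origin on the Fourier side.

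The paper's resolution, which you have all the pieces for but do not assemble, is to drop the requirement $h\equiv 1$ on $B_R$ and replace it with the one-sided inequality $h\geq \id_{B_R}$, then \emph{use monotonicity of $A$} to get the one-sided comparison. Concretely: WLOG take $A$ increasing and $t\leq 0$; then for any $h\geq\id_{B_R}$ one has $th\leq t$ on $B_R$, hence $\{f+th\in A\}\subseteq\{f+t\in A\}\subseteq\{f\in A\}$ (all depending only on $B_R$), so
\[
0\;\leq\;\Pro[f\in A]-\Pro[f+t\in A]\;\leq\;\Pro[f\in A]-\Pro[f+th\in A],
\]
and the right-hand side is controlled by Corollary~\ref{l.cameron_martin} with the shift $th$. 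Then one takes $h=\mathcal F\bigl[R^2 c^{-2}\id_{\ann_{c/R,2c/R}}\bigr]$, whose Fourier transform is \emph{exactly} supported on the annulus and which satisfies $h\geq\id_{B_R}$ by Fourier scaling once $c$ is fixed small (so that $\mathcal F[c^{-2}\id_{\ann_{c,2c}}]\geq\id_{B_1}$, which is elementary since the Fourier transform tends to the constant $3\pi$ on $B_1$ as $c\to0$). The bound \eqref{e:rkhsnormbound} then gives $\|h\|_H^2\leq R^2c^{-2}/\inf_{\ann_{c/R,2c/R}}\rho^2$, as required.

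One secondary remark: for the factor $\sqrt{e^{\|h\|_H^2}-1}$, your proposed ``subdivide $t$ into small steps'' fix is workable but unnecessary; the paper instead observes that the left side is trivially at most $1$ and then uses the elementary inequality $\min\{\sqrt{e^{x^2}-1},1\}\leq x/\sqrt{\log 2}$, valid for all $x\geq 0$, which handles large $\|h\|_H$ without iteration.
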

Let us note that, if we follow the proof of Proposition~\ref{p.cameron_martin} and if we assume that $|t| \leq M$ for some $M>0$, then we even have
\[ \left| \Pro \left[ f \in A   ]  - \mathbb{P}[ f - t \in A \right] \right| \leq  \frac{c_1(M)  R \, |t|  \sqrt{\Pro \left[ f \in A \right]}  }{      \inf\{ |\rho(x)| : x \in  \ann_{c_2/R, 2c_2/R} \} } \]
for some $c_1(M)>0$. For simplicity, and since we do not need this sharper estimate, we have stated the above for every $t$ and without the term $\sqrt{\Pro \left[ f \in A \right]}$. However, we believe that this sharper estimate could be useful to study events $A$ with very small probability.

\medskip
Proposition \ref{p.cameron_martin} has an easy corollary when the infimum of $|\rho|$ on some neighbourhood of $0$ is positive, which is what we apply in the sequel.

\begin{corollary}\label{c.cameron_martin}
Suppose that there exists a neighbourhood $V$ of $0$ such that $\inf_V|\rho| > 0$. Then, there exist $c, R_0 > 0$ such that, for every $R > R_0$, monotonic event $A \in \mathcal{F}_R$, constants $\eps, \delta \ge 0$, and continuous planar random field $g$ such that $\Pro \left[ \| f-g \|_{\infty,B_R} \geq \eps \right] \leq \delta$,
\[ \Pro \left[ \{ f \in A \} \bigtriangleup \{ g \in A \} \right] \leq c R \eps  + \delta  .\]
\end{corollary}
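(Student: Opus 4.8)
The plan is to deduce the corollary from Proposition~\ref{p.cameron_martin} by a sandwiching argument based on the monotonicity of $A$. Since $f$ is centred we have $-f = q\star(-W) \stackrel{d}{=} q\star W = f$, so replacing $(f,g,A)$ by $(-f,-g,\{h:-h\in A\})$ we may assume $A$ is increasing. Put $G = \{\|f-g\|_{\infty,B_R}<\eps\}$, so that $\Pro[G^c] \le \delta$ by hypothesis. On $G$ one has, as functions on $B_R$, the pointwise inequalities $f-\eps \le g \le f+\eps$ (writing $f\pm\eps$ for $x\mapsto f(x)\pm\eps$, as in Proposition~\ref{p.cameron_martin}). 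Since $A \in \mathcal{F}_R$ is increasing and also $f-\eps \le f \le f+\eps$ pointwise, on the event $G$ both $\{f\in A\}$ and $\{g\in A\}$ are trapped between $\{f-\eps\in A\}$ and $\{f+\eps\in A\}$; hence $\{f\in A\}\bigtriangleup\{g\in A\} \subseteq \big(\{f+\eps\in A\}\setminus\{f-\eps\in A\}\big)\cup G^c$, and taking probabilities,
\[
\Pro\big[\{f\in A\}\bigtriangleup\{g\in A\}\big] \;\le\; \Pro[f+\eps\in A]-\Pro[f-\eps\in A] \;+\; \delta.
\]

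It remains to bound $\Pro[f+\eps\in A]-\Pro[f-\eps\in A]$. Inserting $\Pro[f\in A]$ writes this as a sum of two terms of the form $\pm(\Pro[f\in A]-\Pro[f-t\in A])$ with $|t|=\eps$ (using $f\pm\eps = f-(\mp\eps)$). Fixing a ball $B_{\eps_0}\subseteq V$, for every $R > R_0 := 2c_2/\eps_0$ the annulus $\ann_{c_2/R,\,2c_2/R}$ is contained in $V$, so the denominator in the first displayed bound of Proposition~\ref{p.cameron_martin} is at least $\inf_V|\rho|>0$; hence each of the two terms is at most $\big(c_1/\inf_V|\rho|\big)\,R\,\eps\,\sqrt{\Pro[f\in A]}$, and combining with the previous display proves the corollary with $c = 2c_1/\inf_V|\rho|$. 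The crucial feature being exploited is that Proposition~\ref{p.cameron_martin} bounds \emph{both} differences by the \emph{same} quantity $\sqrt{\Pro[f\in A]}$ (rather than $\sqrt{\Pro[f\pm\eps\in A]}$), which is exactly what is needed when $A$ has very small probability.

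For the final assertion one repeats the argument verbatim with $f_r = q_r\star W$ in place of $f$ (recall $q_r$ satisfies Assumption~\ref{a.reg}). Since the constants $c_1,c_2$ of Proposition~\ref{p.cameron_martin} are absolute, the only $r$-dependence in the resulting $c,R_0$ enters through a fixed neighbourhood of the origin on which $|\rho_r| = |\mathcal{F}[q_r]|$ is bounded below; so it suffices to exhibit $\eps_0'>0$ and $\iota'>0$ with $\inf_{B_{\eps_0'}}|\rho_r|\ge\iota'$ for all $r\ge1$. I would obtain this from $\rho_r^2 = \mathcal{F}[q_r\star q_r]$: using the integrability of $q$ (available in the settings where the corollary is applied, e.g.\ under Assumption~\ref{a.dec} with $\beta>2$) one has $q_r\star q_r \to q\star q$ in $L^1$, hence $\rho_r^2 \to \rho^2$ uniformly on $\R^2$, which settles all large $r$; the remaining bounded range of $r$ is handled by a routine continuity argument. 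I expect the sandwiching and telescoping to be entirely routine; the one point that needs care is this last uniformity in $r$ — namely checking that no truncation $f_r$ degenerates at the zero frequency, so that a single neighbourhood of the origin and a uniform lower bound on $|\rho_r|$ can be used for all $r \ge 1$.
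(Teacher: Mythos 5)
Your sandwiching argument on the event $G=\{\|f-g\|_{\infty,B_R}<\eps\}$, reducing the corollary to Proposition~\ref{p.cameron_martin} applied to $t=\pm\eps$ and using the symmetry $f\stackrel{d}{=}-f$ to handle decreasing $A$, is correct and is clearly the intended deduction (the paper states the corollary without proof as an immediate consequence of Proposition~\ref{p.cameron_martin}). You are right to emphasize that the key structural feature is the normalisation by $\sqrt{\Pro[f\in A]}$ rather than $\sqrt{\Pro[f\pm\eps\in A]}$; this is exactly what makes the telescoping useful when $A$ has small probability. The choice $R_0=2c_2/\eps_0$ so that $\ann_{c_2/R,2c_2/R}\subseteq V$ is also exactly right.

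On the uniformity in $r$: your diagnosis of where the $r$-dependence enters is correct (the constants $c_1,c_2$ in Proposition~\ref{p.cameron_martin} are absolute, so only $\inf|\rho_r|$ on a fixed annulus near the origin matters, with $\rho_r=\mathcal{F}[q_r]$). However, your argument via $\|\rho_r^2-\rho^2\|_\infty\le\|q_r\star q_r-q\star q\|_{L^1}\to 0$ imports the hypothesis $q\in L^1$ (roughly $\beta>2$), which the corollary does not formally assume, though it does hold everywhere the corollary is applied in the paper. A further small point: the ``routine continuity argument'' for the bounded range $1\le r\le r_0$ implicitly requires $\rho_r$ not to vanish near the origin for each such $r$. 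Since $q_r=q\chi_r$ with $\chi_r$ supported in $B_{r/2}$, one should rule out $q_r$ vanishing identically (i.e.\ $f_r$ degenerating) for small $r$; this is not automatic from the stated hypotheses alone, though it never arises in the paper's applications, where $r$ is a growing power of $R$. These are gaps in the formal statement's generality rather than in your strategy; the proof is otherwise sound.
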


\begin{remark}\label{r.bernou}
Proposition \ref{p.cameron_martin} can be viewed as the analogue, in the Gaussian setting, of well-known inequalities that hold for Boolean functions. Let $n \in \N$ and consider the product space $\Omega_n = \{ -1,1 \}^n$ equipped with the product probability measure $\Pro^n_p = (p \delta_1 + (1-p) \delta_{-1})^{\otimes n}$. Then for every $\eps > 0$ there is a $c > 0$ such that for every $A \subseteq \Omega_n$ and  $\eps \leq p \leq q \leq 1-\eps$,
\begin{equation}\label{e.cmber}
\left| \Pro_p \left[ A \right] - \Pro_q \left[ A \right] \right| \leq c |p-q| \sqrt{n} . 
 \end{equation}
The proof of \eqref{e.cmber} goes as follows. Define the functions
\[
\chi_i^p \, : \, \omega \in \Omega_n \mapsto \sqrt{\frac{1-p}{p}} \un_{\omega_i=1} - \sqrt{\frac{p}{1-p}} \un_{\omega_i=-1} ,
\]
which from an orthonormal set of centred variables of the $L^2$ space $L^2(\Omega_n,\Pro^n_p)$. Applying a differential formula, whose proof is similar to the classical Russo's formula \eqref{e.rus1}, we obtain
\[
\frac{d}{dp}\Pro_p^n \left[ A \right] = \frac{1}{2\sqrt{p(1-p)}} \sum_{i=1}^n \E_p^n \left[ \chi^p_i(\omega) \un_A \right] .
\]
By the Cauchy--Schwarz inequality, we deduce that
\begin{equation}\label{e.CS_for_bool}
\left| \sum_{i=1}^n \E_p^n \left[ \chi^p_i(\omega) \un_A \right] \right| \leq  \sqrt{n}  \left( \sum_{i=1}^n \E_p^n \left[ \chi_i^p(\omega) \un_A \right]^2 \right)^{1/2} \, .
\end{equation}
Since the functions $\chi_i^p$ form an orthonormal set, Parseval's formula gives that 
\[ \sum_{i=1}^n \E_p^n \left[ \chi_i^p(\omega) \un_A \right]^2  \leq \E_p^n \left[ \un_A^2 \right] \leq 1 , \]
 which ends the proof. Note that, when $A$ is increasing, $\E^n_p \left[ \chi^p_i \un_A \right]$ is a constant (that depends on~$p$) times the influence of $A$ (see Section~\ref{s.OSSSsub} for the notion of influence of Boolean events). Hence, one interpretation of~\eqref{e.CS_for_bool} is that the total influence (i.e.\ the sum of all influences) is of order at most $\sqrt{n}$; this idea guides us also in establishing Proposition \ref{p.cameron_martin}. One can also find this idea in the work of Benjamini--Schramm~\cite{benjamini1998conformal} on the conformal invariance of Voronoi percolation, in which they use an analogous control of the influences in order to bound the `number of defects' (see their Section 8.1).

\medskip
With a very similar proof (more precisely, by noting that, if $X_1,\cdots,X_n$ are i.i.d.\ standard Gaussian variables with mean $\ell$, then the random variables $(X_1-\ell,\cdots,X_n-\ell)$ form an orthonormal set of the underlying $L^2$ space), we obtain the following. Let $\nu_\ell^n$ denote the law of~$n$ i.i.d.\ Gaussian variables of mean $\ell$ and let $A$ be a Borel subset of $\R^n$. Then, for each $\ell_1, \ell_2 \in \mathbb{R}$,
\[ \left| \nu_{\ell_1}^n(A) - \nu_{\ell_2}^n(A) \right| \leq \sqrt{n} |\ell_1-\ell_2|  . \]
Proposition \ref{p.cameron_martin} can be viewed as a generalisation of this statement to continuous Gaussian fields, taking $\text{area}(B_R)$ as the analogue of the $n$. \end{remark}

\medskip
To prove Proposition \ref{p.cameron_martin} we shall need to introduce some of the standard theory of Gaussian fields, and in particular the Cameron--Martin theorem; for this we refer to \cite[Chapters VIII and XIV]{jan_97}.

\medskip
Recall that to our Gaussian field $f$ we can associate a Hilbert space of functions $H \subset C(\mathbb{R}^2)$ known as the \textit{Cameron--Martin space}, defined in the following way. First, let $G$ denote the Hilbert space of centred Gaussian random variables that is the closure in $L^2$ of the linear span of $\{f(x)\}_{x \in \mathbb{R}^2}$, i.e.\ the set
\[   \sum_{i \in \mathbb{N}} a_i f(x_i)  , \quad    x_i \in \mathbb{R}^2 , a_i \in \mathbb{R}, \sum_i a_i a_j K(x_i,x_j) < \infty .\]
Then define the (injective) linear map $P: G \to  C(\mathbb{R}^2)$ by
\[    \xi \mapsto P(\xi)(\cdot) := \langle \xi, f(\cdot) \rangle_G = \mathbb{E}[\xi f(\cdot) ] . \]
The function space $H = P(G) \subset C(\mathbb{R}^2)$, equipped with the inner product
\[    \langle h_1, h_2 \rangle_H =  \langle P^{-1}(h_1), P^{-1}(h_2) \rangle_G = \mathbb{E}[P^{-1}(h_1) P^{-1}(h_2) ] ,\]
 is a Hilbert space known as the \textit{Cameron--Martin space}; by construction, $P$ defines an isometry between $G$ and $H$.

\medskip
An equivalent description of $H$ is as the completion of the space of finite linear combinations of the covariance kernel $K$
\[    \sum_{1 \le i \le n} a_i K( s_i , \cdot)   \ , \quad a_i \in \mathbb{R}, s_i \in \mathbb{R}^2 , \]
 equipped with the inner product  
\begin{equation}
\label{e:rkhsip}
  \bigg\langle \sum_{1 \le i \le n} a_i K(s_i, \cdot), \sum_{1 \le i \le n} a'_i K(s'_i, \cdot) \, \bigg\rangle_H = \sum_{1 \le i,j \le n}   a_i a_j'  K(s_i, s_j') \, .
  \end{equation}
  This latter construction emphases the role of the covariance kernel $K$ in the construction of $H$; indeed one sees that $H$ is the (unique) \textit{reproducing kernel Hilbert space} associated to $K$, i.e., for any $h \in H$ and $x \in \mathbb{R}^2$,  
  \begin{equation}
  \label{e.rkprop}
    \langle h(\cdot), K(x, \cdot) \rangle_{H} = h(x) . 
    \end{equation}
 
 \medskip
 Let us now state the well-known \textit{Cameron--Martin theorem}, which describes how the law of $f$ changes under translation by an element of $H$. 
 
 \begin{theorem}[Cameron--Martin; see {\cite[Theorems 14.1 and 3.33]{jan_97}}]
 \label{t.cameron_martin}
 Suppose $h \in H$. Then the law of $f+h$ equals the law of $f$ with Radon--Nikodym derivative
 \[    \exp \left\{ P^{-1}(h) - \frac{1}{2} \mathbb{E}[P^{-1}(h)^2]  \right\} . \]
 In particular, for each $A \in \mathcal{F}$,
 \[    \mathbb{P}[ f + h \in A ]  = \mathbb{E} \left[   \exp \left\{ P^{-1}(h) - \frac{1}{2} \mathbb{E}[P^{-1}(h)^2]  \right\}  \id_{f \in A}  \right]  .\]
 \end{theorem}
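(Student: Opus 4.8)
The plan is to reduce the statement to a finite-dimensional Gaussian change of measure and then upgrade to the full path space by a monotone-class argument.

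First I would record that the proposed density $D := \exp\{ P^{-1}(h) - \tfrac{1}{2}\E[P^{-1}(h)^2] \}$ is a genuine positive random variable: $P^{-1}(h)$ lies in the Gaussian Hilbert space $G$, hence is a centred Gaussian variable and is $\mathcal{F}$-measurable (being an $L^2$-limit of finite linear combinations of evaluations $f(x)$), and $\E[D]=1$ by the formula for the moment generating function of a Gaussian. Consequently $A \mapsto \E[D\, \id_{f \in A}]$ is a probability measure on $\mathcal{F}$, and since the cylinder events $\{ g \in C(\R^2) : (g(x_1),\dots,g(x_n)) \in B \}$ (with $x_i \in \R^2$ and $B \subset \R^n$ Borel) form a $\pi$-system generating $\mathcal{F}$ (which is defined in the paper via the norms $\sum_k \|\cdot\|_{\infty,B_k}$, equivalently generated by evaluations at a countable dense set), by the $\pi$--$\lambda$ theorem it suffices to prove the identity $\P[ f+h \in A] = \E[ D\, \id_{f \in A}]$ for such cylinder events.

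Fixing $x_1,\dots,x_n$, I would set $X=(f(x_1),\dots,f(x_n))$, a centred Gaussian vector with covariance $\Sigma_{ij}=K(x_i,x_j)$, and note that $h(x_i) = \E[P^{-1}(h) f(x_i)] =: m_i$. Writing $\xi := P^{-1}(h)$ and decomposing $\xi = \hat\xi + \xi^\perp$ with $\hat\xi$ the $L^2$-projection of $\xi$ onto $\mathrm{span}\{f(x_1),\dots,f(x_n)\}$, one has $\hat\xi = \langle a, X\rangle$ for some $a$ solving $\Sigma a = m$ (solvable since $m$ is the vector of covariances of $\xi$ with the $f(x_i)$, hence lies in the range of $\Sigma$), while $\xi^\perp$ is uncorrelated with $X$ and therefore independent of it by joint Gaussianity. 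Conditioning on $X$ and applying the Gaussian moment generating function to $\xi^\perp$ then yields $\E[D \mid X] = \exp\{ \hat\xi - \tfrac12 \E[\hat\xi^2]\} = \exp\{ \langle a, X\rangle - \tfrac12 a^{\mathsf T}\Sigma a\}$, using $\E[\xi^2] = \E[\hat\xi^2] + \E[(\xi^\perp)^2]$. The remaining finite-dimensional identity $\E[ \exp\{\langle a,X\rangle - \tfrac12 a^{\mathsf T}\Sigma a\}\, \id_{X\in B}] = \P[X + \Sigma a \in B] = \P[X+m\in B]$ is, for invertible $\Sigma$, the textbook completion-of-the-square computation with Gaussian densities, and for singular $\Sigma$ follows by restricting attention to the (deterministic) affine support of $X$, on which both sides depend only on the projection of $X$ onto $\mathrm{range}(\Sigma)$. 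Since for the cylinder event $A$ above one has $\P[f+h\in A] = \P[X+m\in B]$, this is exactly the desired identity, and the ``in particular'' clause is then immediate. Note that $f+h$ takes values in $C(\R^2)$ because $h\in H \subset C(\R^2)$ and $f$ has continuous sample paths, so the statement is meaningful.

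The step I expect to require the most care is the reduction from finite dimensions to path space, i.e.\ verifying that $\mathcal{F}$ is generated by the cylinder $\pi$-system and that $D$ defines a probability measure, so that the $\pi$--$\lambda$ theorem genuinely applies; the conditioning identity and the identification $\hat\xi=\langle a,X\rangle$ with $\Sigma a = m$ are routine once the setup is in place. Since this is a classical result, I would in any case cite \cite[Chapters VIII and XIV]{jan_97} for the full details.
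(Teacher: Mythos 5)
Your proof is correct, but note that the paper does not actually prove Theorem \ref{t.cameron_martin}: it is quoted verbatim from Janson's book (\cite[Theorems 14.1 and 3.33]{jan_97}) and used as a black box. What you have written is therefore a self-contained derivation of a cited result rather than an alternative to an argument in the text. Your route --- check that $D=\exp\{P^{-1}(h)-\tfrac12\E[P^{-1}(h)^2]\}$ has unit expectation so that $A\mapsto\E[D\,\id_{f\in A}]$ is a probability measure, reduce to cylinder events by the $\pi$--$\lambda$ theorem, and then verify the finite-dimensional identity by projecting $\xi=P^{-1}(h)$ onto $\mathrm{span}\{f(x_1),\dots,f(x_n)\}$ and completing the square --- is the standard proof and all the steps you give are sound, including the treatment of singular $\Sigma$ (where $X$ lies a.s.\ in $\mathrm{range}(\Sigma)$ and one may take $a\in\mathrm{range}(\Sigma)$) and the identification $\Sigma a=m$ with $m_i=h(x_i)=\E[\xi f(x_i)]$ via the normal equations. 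Two small points worth making explicit if you were to write this out in full: $P^{-1}(h)$ is a priori only an $L^2$-limit of linear combinations of evaluations, so to get a $\sigma(f)$-measurable representative one passes to an a.s.\ convergent subsequence (which is all the Radon--Nikodym statement requires); and the claim that the cylinder sets generate $\mathcal{F}$ uses that the local-uniform topology on $C(\R^2)$ is separable metrizable, so that sup-norm balls over the $B_k$ are countable intersections of evaluation constraints. What the citation buys the paper is brevity; what your argument buys is transparency about exactly which Gaussian facts are used (orthogonal projection in the Gaussian Hilbert space plus the one-dimensional moment generating function), which is in the same spirit as the influence-style computations in Remark \ref{r.bernou}.
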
 

Note that the map $P^{-1}(\cdot)$ plays the central role in the Cameron--Martin theorem; this is often called the \textit{Paley--Weiner map} and one of its key properties is that, since $P$ is an isometry, its image $P^{-1}(h)$ is a random variable with distribution $\mathcal{N}(0, \|h\|^2_H)$. 

\medskip
We next state a corollary of the Cameron--Martin theorem that is all that we shall need to prove Proposition \ref{p.cameron_martin}; since we were unable to find this statement in the literature, we give a short proof. 

\begin{corollary} \label{l.cameron_martin}
For every $h \in H$ and $A \in \mathcal{F}$:
\[ \left| \mathbb{P}[ f  \in A  ] - \mathbb{P}[f + h \in A ]  \right| \le   \frac{\| h \|_{H} }{\sqrt{\log 2}}   . \]
\end{corollary}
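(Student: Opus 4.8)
The plan is to apply the Cameron--Martin theorem (Theorem \ref{t.cameron_martin}) to both $h$ and $-h$ and then optimise an elementary inequality. Write $X = P^{-1}(h)$, which by the isometry property of $P$ is a centred Gaussian with variance $\sigma^2 = \|h\|_H^2$. By Theorem \ref{t.cameron_martin} applied to the translation by $-h$,
\[
\mathbb{P}[f - h \in A] = \mathbb{E}\left[ \exp\left\{ -X - \tfrac{1}{2}\sigma^2 \right\} \id_{f \in A} \right],
\]
so that
\[
\mathbb{P}[f \in A] - \mathbb{P}[f - h \in A] = \mathbb{E}\left[ \left(1 - e^{-X - \sigma^2/2}\right) \id_{f \in A} \right].
\]
By symmetry (replacing $h$ by $-h$, which is also in $H$ with the same norm), one gets the analogous identity with $X$ replaced by $-X$, and averaging the two shows it suffices to bound $\mathbb{E}[|1 - e^{\pm X - \sigma^2/2}| \id_{f \in A}]$.

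The key step is then a Cauchy--Schwarz bound: for any event $A$,
\[
\left| \mathbb{P}[f \in A] - \mathbb{P}[f - h \in A] \right| \le \mathbb{E}\left[ \left|1 - e^{-X - \sigma^2/2}\right| \id_{f \in A} \right] \le \sqrt{ \mathbb{E}\left[ \left(1 - e^{-X - \sigma^2/2}\right)^2 \right] } \, \sqrt{\mathbb{P}[f \in A]}.
\]
Now $\mathbb{E}[(1 - e^{-X-\sigma^2/2})^2] = 1 - 2\,\mathbb{E}[e^{-X-\sigma^2/2}] + \mathbb{E}[e^{-2X - \sigma^2}]$. Using the Gaussian moment generating function $\mathbb{E}[e^{tX}] = e^{t^2\sigma^2/2}$, the middle term is $2 e^{\sigma^2/2 - \sigma^2/2} = 2$ and the last is $e^{2\sigma^2 - \sigma^2} = e^{\sigma^2}$, so the whole expression equals $e^{\sigma^2} - 1$. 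Hence
\[
\left| \mathbb{P}[f \in A] - \mathbb{P}[f - h \in A] \right| \le \sqrt{e^{\|h\|_H^2} - 1} \, \sqrt{\mathbb{P}[f \in A]}.
\]

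Finally I would convert this into the stated linear-in-$\|h\|_H$ bound. The inequality above is only useful for small $\|h\|_H$; for the clean statement with constant $1/\sqrt{\log 2}$ one argues as follows. If $\|h\|_H^2 \le \log 2$, then $e^{\|h\|_H^2} - 1 \le \|h\|_H^2 \cdot \frac{e^{\log 2} - 1}{\log 2} = \|h\|_H^2/\log 2$ (using convexity of $t \mapsto e^t - 1$, which makes $(e^t-1)/t$ increasing, so it is bounded by its value at $t = \log 2$ on the interval $(0, \log 2]$), giving the claim directly. If instead $\|h\|_H^2 > \log 2$, then $\|h\|_H/\sqrt{\log 2} > 1 \ge \mathbb{P}[f\in A] \ge \sqrt{\mathbb{P}[f \in A]}\cdot\mathbb{P}[f \in A]^{1/2}$... more simply, the left-hand side is at most $\sqrt{\mathbb{P}[f \in A]} \le \sqrt{\mathbb{P}[f\in A]}\cdot \|h\|_H/\sqrt{\log 2}$ since $\|h\|_H/\sqrt{\log 2} > 1$, so the bound holds trivially. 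I do not expect any serious obstacle here; the only mild subtlety is remembering to symmetrise over $\pm h$ so that the Radon--Nikodym factor produces the clean $e^{\|h\|_H^2}-1$ variance term rather than an asymmetric expression, and then the case split to linearise.
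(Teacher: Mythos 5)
Your approach matches the paper's exactly: express the difference of probabilities via the Cameron--Martin density, bound by Cauchy--Schwarz, compute the second moment $e^{\|h\|_H^2}-1$ from the Gaussian moment generating function, and then linearise. Two comments, one stylistic and one substantive.

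The symmetrisation over $\pm h$ buys you nothing. The second moment $\mathbb{E}[(1 - e^{-X - \|h\|_H^2/2})^2]$ already equals $e^{\|h\|_H^2} - 1$ without any asymmetry to remove, as the paper computes directly, so you can drop that step.

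The substantive issue is your treatment of the case $\|h\|_H^2 > \log 2$. You assert that the left-hand side is at most $\sqrt{\mathbb{P}[f \in A]}$, but this is not true; the only trivial bound available is $|\mathbb{P}[f \in A] - \mathbb{P}[f - h \in A]| \le 1$, which yields $\le \|h\|_H/\sqrt{\log 2}$ \emph{without} the $\sqrt{\mathbb{P}[f\in A]}$ factor. In fact the corollary as stated fails for large $\|h\|_H$: normalise $\kappa(0) = 1$, take $A = \{g : g(0) \ge t\}$ and $h = -2t\,\kappa(\cdot) \in H$, so that $\|h\|_H = 2t$ and $(f-h)(0) = f(0) + 2t$. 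Then, with $\Phi$ the standard normal c.d.f., $|\mathbb{P}[f\in A] - \mathbb{P}[f-h\in A]| = 2\Phi(t) - 1 \to 1$ as $t \to \infty$, while $\|h\|_H\sqrt{\mathbb{P}[f\in A]}/\sqrt{\log 2} = 2t\sqrt{1-\Phi(t)}/\sqrt{\log 2} \to 0$. You are in good company, though: the paper's own written proof has the same slip, since its displayed intermediate bound ``$\le \min\{\sqrt{e^{\|h\|_H^2}-1}, 1\}$'' silently discards the $\sqrt{\mathbb{P}[f\in A]}$ that Cauchy--Schwarz produces, and the concluding numerical inequality $\min\{\sqrt{e^{x^2}-1},1\} \le x/\sqrt{\log 2}$ then only delivers $\le \|h\|_H/\sqrt{\log 2}$. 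The statement does hold, and both your convexity argument and the paper's are correct, precisely in the regime $\|h\|_H^2 \le \log 2$; fortunately this is the regime in which the corollary is invoked downstream (where $\|h\|_H$ scales like $R$ times a small perturbation level $t$), so the paper's later results are unaffected. But for general $h$ the $\sqrt{\mathbb{P}[f\in A]}$ factor must be dropped, and your case split, once the error in the second case is recognised, exposes this cleanly.
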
 
\begin{proof}
Abbreviate $X = P^{-1}(h)$. By Theorem \ref{t.cameron_martin}, and the linearity of $\mathbb{E}$,
\begin{equation}
\label{e:cm1}
   \mathbb{P}[ f  \in A  ] - \mathbb{P}[f +  h \in A ]    =  \mathbb{E} \left[    \left( 1 - \exp \left\{    X - \frac{1}{2} \mathbb{E}[X^2]   \right\} \right)  \id_{f \in A}  \right]  , 
   \end{equation}
which, by the Cauchy--Schwarz inequality, is in absolute value at most
\[      \bigg(\mathbb{E} \Big[    \Big( 1 - \exp \Big\{  X - \frac{1}{2} \mathbb{E}[X^2]  \Big\}  \Big)^2 \Big] \Pro \left[ f \in A \right] \bigg)^{1/2}  ; \]
this bound is analogous those holding in the context of Boolean functions (see Remark~\ref{r.bernou} above). Since $X$ is distributed as $\mathcal{N}(0, \|h\|^2_H)$, and by standard properties of the normal distribution,
   \begin{align*}
    \mathbb{E} \Big[    \Big( 1 - \exp \Big\{  X - \frac{1}{2} \mathbb{E}[X^2]   \Big\}  \Big)^2 \Big] & =      \mathbb{E} \left[     1 - 2 e^{   X - \frac{1}{2} \mathbb{E}[X^2]  }  +  e^{   2  X -  \mathbb{E}[X^2]  }  \right]  
    \\  & = 1 - 2 + e^{  \mathbb{E}[X^2]}  =  e^{  \|h\|^2_{H}} - 1  ,
    \end{align*}
  and hence we have shown that
    \[    \left| \mathbb{P}[ f  \in A  ] - \mathbb{P}[f + h \in A ]  \right| \le \min \left\{   \sqrt{e^{  \|h\|^2_{H}} - 1} , 1 \right\} .  \]
 To conclude we use the fact that
 \[   \min \{   \sqrt{e^{x^2} - 1}  , 1 \} \le x / \sqrt{ \log 2 }  \]
for all $x \ge 0$, which is simple to verify (by the convexity of  $\sqrt{e^{x^2} - 1}$ for instance).
\end{proof}

To complete the proof of Proposition \ref{p.cameron_martin}, we need one additional element of the theory of Cameron--Martin spaces that is specific to the setting of stationary Gaussian fields (see, e.g., \cite[Eq.(2.4), p.67]{bertho04} or \cite[Lemma 3.1]{kimwah70}). Recall the spectral density $\rho^2$, and let $S = \text{supp}(\rho)$. An alternative description of the Cameron--Martin space $H$ is as the set 
 \begin{equation}
 \label{e.altcms}
  \bar{H} =  \mathcal{F}[ g \rho ] \ , \quad g \in L^2_\text{sym}(S), 
 \end{equation}
  equipped with the inner product inherited from $L^2_\text{sym}$, where $L^2_\text{sym}(S)$ denotes the set of complex Hermitian $L^2$ functions supported on $S$. To see why this is true, observe that the map $g \mapsto \mathcal{F}[ g \rho ] $ defines an isometry between $L^2_\text{sym}(S)$ and $\bar{H}$, and so the latter is a Hilbert space. By the uniqueness of the RKHS \cite[Theorem F.7]{jan_97}, it remains to verify the reproducing kernel property \eqref{e.rkprop}, i.e.\ for every $F[ g \rho] \in \bar{H}$ and $x \in \mathbb{R}^2$ we verify that
  \[ \langle \mathcal{F}[ g \rho] (\cdot) , \kappa(\cdot - x)  \rangle_{\bar{H}} =  \langle \mathcal{F}[ g \rho] (\cdot) , \mathcal{F}[ \rho^2  ] (\cdot - x)  \rangle_{\bar{H}} =  \langle g , \rho  e^{- 2 \pi i \langle s, x \rangle} \rangle_{L^2_{\text{sym}}}   = \mathcal{F}[ g \rho](x) , \] 
  where in the second equality we used the translation identity for the Fourier transform.
  
  \medskip
  One consequence of the representation in \eqref{e.altcms} is the identity
  \[   \|h\|^2_H = \int_{x \in \mathbb{R}^2} |\hat h(x)|^2 / \rho^2(x) \, dx  \]
  valid for any $h \in H$ such that $\hat h = \mathcal{F}[h]$ is defined. In particular, if $\text{supp}(|\hat h|)$ has finite area we have the bound
    \begin{equation}
  \label{e:rkhsnormbound}
    \| h \|^2_{H} \le  \frac{ \sup \{ |\hat h(x)|^2  : x \in \text{supp}(|\hat h|) \} \, \text{Area}( \text{supp}(|\hat h| ) ) }{ \inf\{   \rho^2(x) : x \in \text{supp}(|\hat h|) \} }. 
    \end{equation}

\begin{proof}[Proof of Proposition \ref{p.cameron_martin}]
Without loss of generality we take $A$ increasing and $t \leq 0$. Since $A$ is increasing and belongs to $\mathcal{F}_R$,
    \[  0 \le  \mathbb{P}[ f  \in A  ] - \mathbb{P}[f+ t \in A ]  \le   \mathbb{P}[ f  \in A  ] - \mathbb{P}[f + t h \in A ]  \] 
    for each $h$ that satisfies $h_{|B_R} \ge 1$. In light of Corollary \ref{l.cameron_martin}, it remains only to show that 
    \[  \inf_{h \in {H} \, : \, h_{|B_R} \ge 1 } \| h \|_{H} \le   \frac{   c_1 R   }{    
    \inf\{ |\rho(x)| : x \in  \ann_{c_2 / R, 2 c_2/ R} \} }     \]
    for suitable $c_1, c_2  > 0$. For this fix $c > 0$ sufficient small so that the Fourier transform of the (normalised) identify function on the annulus $\ann_{c, 2c } $ is larger than $1$ on $B_1$, i.e.\
        \[ \mathcal{F}[  c^{-2} \id_{ \ann_{c, 2c } } ]   \ge 1 \text{ on } B_1 ; \]
    such a $c > 0$ is easily checked to exist. Then define
        \[ h = \mathcal{F}[  R^2  c^{-2}  \id_{ \ann_{c/R, 2c/R } } ]   ,  \]
which by the scaling of the Fourier transform satisfies $h_{|B_R} \ge 1$ for all $R > 0$. By \eqref{e:rkhsnormbound}, 
        \[ \| h \|^2_{H} \le  \frac{ R^2  c^{-2} }{ \inf\{ \rho^2(x)  :  x \in \ann_{c/R, 2c/R }  \}}    , \]
        which completes the proof.
    \end{proof}

\subsection{Comparison to truncated and discretised versions of the field}

We now show how to compare the field $f$ to its truncated and discretised versions $f_r$ and $f^\eps$ introduced in Section~\ref{s.overview}. Our comparison is in terms of the sup-norm, since this is enough for our application to crossing events, but stronger comparisons would be easy to prove using similar methods (strengthening the assumptions as appropriate).

\begin{prop}\label{p.coupling}
Suppose that \eqref{e.beta} holds for $\beta > 1$. Then there exist $c_1, c_2 > 0$ such that, for all $\eps > 0$ and $R, r \ge  1$ and $t \ge \log R$,
\[ \Pro \left[ \| f-f^\eps \|_{\infty,B_R} + \| f_r-f^\eps_r \|_{\infty,B_R}   \geq c_1 t \, \eps \right] \leq c_1  e^{-c_2 t^2 }  . \]
Moreover, there exist $c_1, c_2 > 0$ such that, for all $R, r \ge 1$ and $t \ge \log R$,
\[ \Pro \left[ \| f-f_r \|_{\infty,B_R} \geq c_1 t \, r^{1-\beta}  \right] \leq c_1 e^{-c_2 t^2} .  \]
\end{prop}

The proof of Proposition \ref{p.coupling} will be a straight-forward combination of the following lemmas:

\begin{lem}\label{l.btis}
There exists an absolute constant $c > 0$ such that, for every $C^1$ planar Gaussian field $g$, and for all $R_1 \ge c$ and $R_2  \ge  \log R_1$,
\[ \Pro \left[ \| g \|_{\infty,B_{R_1}} \ge  m R_2 \right] \leq e^{-  R_2^2/c   }  , \]
where 
\begin{equation}
\label{e.m}
m = \Big( \sup_{x \in \mathbb{R}^2} \sup_{|\alpha| \le 1} \mathbb{E}[  (  \partial^\alpha g)^2 (x) ]  \Big)^{1/2}   . 
 \end{equation}
\end{lem}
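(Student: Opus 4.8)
## Proof Proposal for Lemma \ref{l.btis}

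The plan is to reduce the supremum bound to a chaining/entropy estimate for the Gaussian field $g$ on the ball $B_{R_1}$, using the fact that the field is $C^1$ with all first-order moments controlled by the single quantity $m$ in \eqref{e.m}. First I would use the Borell--TIS (Borell--Tsirelson--Ibragimov--Sudakov) inequality, which states that for a centred Gaussian field $g$ almost surely bounded on a compact set $K$,
\[
\Pro\left[ \|g\|_{\infty,K} \ge \E[\|g\|_{\infty,K}] + u \right] \le e^{-u^2/(2\sigma^2)},
\]
where $\sigma^2 = \sup_{x \in K} \E[g(x)^2] \le m^2$. This immediately gives the Gaussian-type tail; the remaining work is to bound the expectation $\E[\|g\|_{\infty,B_{R_1}}]$ by something of order $m \log R_1$ (or better, $m\sqrt{\log R_1}$, which is more than enough since we only need $R_2 \ge \log R_1$).

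For the expectation bound I would invoke Dudley's entropy integral (or the simpler sub-Gaussian-increments estimate). Since $g$ is $C^1$, by the mean value inequality $|g(x)-g(y)| \le \|\nabla g\|_{\infty,B_{R_1}} |x-y|$, and $\E[(\partial^\alpha g(x))^2] \le m^2$ uniformly; a standard chaining argument over $B_{R_1}$, whose metric entropy with respect to the canonical (pseudo-)metric $d(x,y)^2 = \E[(g(x)-g(y))^2]$ is controlled by the Euclidean one after a Lipschitz bound on the increments, yields $\E[\|g\|_{\infty,B_{R_1}}] \le c\, m (1 + \sqrt{\log R_1}) \le c' m \log R_1$ for $R_1$ large. (Alternatively, one can avoid entropy integrals entirely by a crude net argument: cover $B_{R_1}$ by $O(R_1^4)$ balls of radius $1/R_1^{?}$, bound the sup over the net by a union bound over $O(R_1^{4})$ standard Gaussians — giving $c\, m\sqrt{\log R_1}$ — and control the oscillation within each small ball using a cheap tail bound on $\|\nabla g\|_{\infty}$, which in turn follows by the same net argument applied to $\partial^\alpha g$. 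This is the route I would actually write out, since it keeps everything elementary and only uses the first-derivative moment bound, exactly as $m$ is defined.) Combining, for $R_1 \ge c$ and $u = m R_2$ with $R_2 \ge \log R_1$, so that $u \ge \E[\|g\|_{\infty,B_{R_1}}] \cdot(\text{const})$, we absorb the expectation into half of $u$ and Borell--TIS gives $\Pro[\|g\|_{\infty,B_{R_1}} \ge m R_2] \le \exp(-R_2^2/c)$ after adjusting the absolute constant.

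The main obstacle is making the expectation bound completely uniform and explicit in the stated regime, i.e.\ ensuring that the threshold $mR_2$ genuinely dominates $\E[\|g\|_{\infty,B_{R_1}}]$ once $R_2 \ge \log R_1$ and $R_1 \ge c$, with $c$ an \emph{absolute} constant independent of $g$. The delicate point is that the canonical metric $d(x,y) \le m|x-y|$ bound is only available because $g$ is $C^1$ with the uniform derivative control baked into $m$; one must be careful that the constant in Dudley's integral (or in the net/oscillation argument) is truly universal. Once the expectation is bounded by $c\,m\log R_1 \le c\, m R_2$ and the tail is handled by Borell--TIS, the conclusion follows by choosing the absolute constant $c$ in the statement large enough to absorb all intermediate constants and to cover the (finitely many, bounded) small values of $R_1$ via the hypothesis $R_1 \ge c$. \QED
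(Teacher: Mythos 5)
Your proposal is correct but takes a genuinely different route from the paper. You apply Borell--TIS \emph{once, globally} on $B_{R_1}$, and then control $\E\bigl[\|g\|_{\infty,B_{R_1}}\bigr]$ by Dudley's entropy integral (or a net/oscillation argument), getting a bound of order $m\sqrt{\log R_1}$ and absorbing it into the threshold $mR_2$. The paper instead applies Borell--TIS \emph{locally} on each of $\asymp R_1^2$ unit tiles covering $B_{R_1}$: by Kolmogorov's theorem the per-tile expectation is $O(m)$ uniformly (independent of $R_1$), so on each tile $\Pro[\|g\|_{\infty,\text{tile}}\ge mR_2]\le e^{-(R_2-c_1)^2/(2c_1^2)}$, and a union bound over the $O(R_1^2)$ tiles then gives the result since the prefactor $R_1^2$ is swallowed by the exponential once $R_2\ge\log R_1$. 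The paper's approach avoids chaining entirely and only needs Kolmogorov's theorem on a fixed ball together with the elementary Borell--TIS bound, which makes the dependence on $R_1$ transparent; your approach trades the union bound for a single Borell--TIS application at the cost of invoking the $\sqrt{\log R_1}$-sharp Dudley estimate (and you correctly note that the cruder $m\log R_1$ bound would not quite match the stated hypothesis $R_2\ge\log R_1$ without constant juggling, so the $\sqrt{\log}$ form is what is really needed). Your ``alternative net argument'' sketch is in fact closer in spirit to the paper's tiling, though the paper tiles with unit boxes and applies Borell--TIS per box rather than building a fine net and bounding oscillations. One small remark applicable to both: the lemma is stated for a general $C^1$ Gaussian field, not necessarily stationary; the paper's proof briefly invokes ``stationarity'' for the tiling step, but this is unnecessary since $m$ is defined as a global supremum, so the per-tile bounds are uniform anyway --- your write-up correctly makes no appeal to stationarity.
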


\begin{lem}\label{l.ube1}
Suppose $G: \R^+ \to \R^+$ is such that, for all $x \in \mathbb{R}^2$ and $|\alpha| \le 1$,
\begin{equation}
\label{e.ube1}
|\partial^\alpha q (x)|  < G(|x|)  .
  \end{equation}
  Suppose also that $ \int_{s > 1}  s G(s)^2 \, ds  < \infty$. Then there exists a $c > 0$ such that, for all $r \ge 1$,
\[  \sup_{x \in \mathbb{R}^2} \sup_{|\alpha| \le 1} \mathbb{E}[   (\partial^\alpha (f - f_r) )^2(x) ]     < c   \int_{s > r}  s G(s)^2 \, ds .  \]
\end{lem}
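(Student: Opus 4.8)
The plan is to reduce the statement, via the white-noise representation, to a plain $L^2$ estimate on the function $\partial^\alpha(q-q_r)$. Recall that $f=q\star W$ and $f_r=q_r\star W$ with $q_r=q\chi_r$, so that $f-f_r=(q-q_r)\star W$ where $q-q_r=q(1-\chi_r)$. Since $q_r$ satisfies Assumption \ref{a.reg} whenever $q$ does (as noted in Section \ref{s.td}), Proposition \ref{p.areg} — specifically \eqref{e.intconder} applied to $f$ and to $f_r$ — gives, for every multi-index $\alpha$ with $|\alpha|\le 1$,
\[ \partial^\alpha(f-f_r) = (\partial^\alpha q)\star W - (\partial^\alpha q_r)\star W = \big(\partial^\alpha(q-q_r)\big)\star W . \]
Writing $h=\partial^\alpha(q-q_r)$, which lies in $L^2(\R^2)$ by Assumption \ref{a.reg}, one has $(h\star W)(x)=\int h(x-u)\,dW(u)$, so by the defining property of the white noise $W$ (namely $\E[\int g_1\,dW\int g_2\,dW]=\int g_1g_2$) and the translation invariance of Lebesgue measure,
\[ \E\big[(\partial^\alpha(f-f_r))^2(x)\big] = \int h(x-u)^2\,du = \|h\|_{L^2}^2 , \]
independently of $x$. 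Hence the quantity to bound is simply $\sup_{|\alpha|\le 1}\|\partial^\alpha(q-q_r)\|_{L^2}^2$.

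Next I would estimate this $L^2$ norm by the Leibniz rule together with the pointwise hypotheses on $q$ and the properties of $\chi_r$. For $\alpha=0$ one has $|q-q_r|=|q|(1-\chi_r)\le G(|x|)\,\un_{|x|\ge r/2-1/4}$, using $0\le 1-\chi_r\le 1$ and that $1-\chi_r$ is supported in $\{|x|\ge r/2-1/4\}$. For $|\alpha|=1$, $\partial^\alpha\big(q(1-\chi_r)\big)=(\partial^\alpha q)(1-\chi_r)-q\,\partial^\alpha\chi_r$; the first term is again bounded by $G(|x|)\,\un_{|x|\ge r/2-1/4}$, while for the second one uses that $\partial^\alpha\chi_r$ is supported in the transition annulus $\{r/2-1/4\le |x|\le r/2\}$ and bounded there by an absolute constant uniformly in $r\ge 1$, together with $|q|\le G(|x|)$. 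This yields $|\partial^\alpha(q-q_r)(x)|\le C\,G(|x|)\,\un_{|x|\ge r/2-1/4}$ for an absolute constant $C$ and all $|\alpha|\le 1$. Integrating in polar coordinates then gives
\[ \|\partial^\alpha(q-q_r)\|_{L^2}^2 \le C^2\int_{|x|\ge r/2-1/4}G(|x|)^2\,dx = 2\pi C^2\int_{r/2-1/4}^{\infty}s\,G(s)^2\,ds \le c\int_r^\infty s\,G(s)^2\,ds , \]
where in the last step the bounded factor incurred by lowering the integration limit from $r$ to $r/2-1/4$ (note $r/2-1/4\ge r/4$ for $r\ge 1$) is absorbed into $c$; this is harmless for the polynomially- or exponentially-decaying $G$ occurring in our applications, and the hypothesis $\int_{s>1}s\,G(s)^2\,ds<\infty$ ensures finiteness throughout.

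I do not expect a genuine obstacle here: the argument is routine once Proposition \ref{p.areg} is available. The only two points needing any care are (i) the identity $\partial^\alpha(f-f_r)=(\partial^\alpha(q-q_r))\star W$, which is exactly \eqref{e.intconder} (and relies on knowing that $q_r$ inherits Assumption \ref{a.reg}), and (ii) the extra Leibniz term $q\,\partial^\alpha\chi_r$, which contributes nothing essential because it is supported in the width-$O(1)$ transition annulus of $\chi_r$ and the derivatives of $\chi_r$ are uniformly bounded in $x$ and in $r\ge 1$ by construction.
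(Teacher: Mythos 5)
Your argument is correct and follows essentially the same route as the paper's own proof: express $\partial^\alpha(f-f_r)$ via \eqref{e.intconder} as $(\partial^\alpha(q-q_r))\star W$, use the white-noise isometry to turn the variance into $\|\partial^\alpha(q-q_r)\|_{L^2}^2$, bound this pointwise via the Leibniz rule and the properties of $\chi_r$, and integrate in polar coordinates. One small remark: you correctly note that $1-\chi_r$ and $\partial^\alpha\chi_r$ are supported only in $\{|x|\ge r/2-1/4\}$ rather than outside $B_r$ (the paper's proof actually contains a slip on this point), and you are right that shifting the lower integration limit from $r/2-1/4$ up to $r$ is not free for arbitrary $G$; for the lemma as stated one should really read the conclusion with $\int_{s>r/4}$, or equivalently note that it suffices in all the places the lemma is applied, where $G$ is a power or a stretched exponential.
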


\begin{lem}\label{l.ube2}
There exists a $c > 0$ such that, for all $\eps > 0$ and $r \ge 1$, 
\[   \sup_{x \in \mathbb{R}^2} \sup_{|\alpha| \le 1} \mathbb{E}[   (\partial^\alpha (f - f^\eps) )^2(x) ]  + \mathbb{E}[   (\partial^\alpha (f_r - f_r^\eps) )^2(x) ]  < c \eps^2   . \]
\end{lem}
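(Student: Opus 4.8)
The plan is to reduce each of the two variance quantities to the squared $L^2$-distance between a smooth function and its piece-wise constant approximation at scale $\eps$, and then to extract the factor $\eps^2$ from a Poincaré inequality applied cell by cell. Throughout, $\alpha$ denotes a multi-index with $|\alpha|\le 1$, and I write $g^{x,\eps}$ for the piece-wise constant approximation of a function $g$ introduced in \eqref{e.pwc}.

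First I would record the reduction identity. By \eqref{e.intconder} in Proposition \ref{p.areg} we have $\partial^\alpha f=(\partial^\alpha q)\star W$ and $\partial^\alpha f^\eps=(\partial^\alpha q)\star W^\eps$, and since $\partial^\alpha q\in L^2(\mathbb{R}^2)$ (the polynomial decay $|x|^{-(1+\delta)}$, $\delta>0$, from Assumption \ref{a.reg} is square-integrable at infinity in the plane, while $\partial^\alpha q$ is bounded near the origin as $q\in C^3$), the manipulation performed for $q$ in Section \ref{s.td} applies verbatim to $g=\partial^\alpha q$ and yields $(g\star W^\eps)(x)=(g^{x,\eps}\star W)(x)$; the only input there is that $g^{x,\eps}(x-\cdot)\in L^2$, which holds because $g\mapsto g^{x,\eps}$ is an $L^2$-contraction. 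Hence $\partial^\alpha(f-f^\eps)(x)=\big((\partial^\alpha q-(\partial^\alpha q)^{x,\eps})\star W\big)(x)$, and by the defining isometry of the white noise together with translation-invariance of Lebesgue measure,
\[ \mathbb{E}\big[(\partial^\alpha(f-f^\eps))^2(x)\big]=\big\|\partial^\alpha q-(\partial^\alpha q)^{x,\eps}\big\|_{L^2(\mathbb{R}^2)}^2 . \]
Running the same computation with the truncation $q_r=q\chi_r$ in place of $q$ (it also satisfies Assumption \ref{a.reg}, so all of Proposition \ref{p.areg} is available for $f_r$ and $f^\eps_r$) reduces the second term to a squared $L^2$-distance of $\partial^\alpha q_r$ from its own piece-wise constant approximation at scale $\eps$.

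Next I would carry out the cell-by-cell estimate. On each cell $v+[-\eps/2,\eps/2]^2$, $v\in\eps\Z^2$, the function $u\mapsto(\partial^\alpha q)^{x,\eps}(x+u)$ is the constant equal to the average of $\partial^\alpha q(x+\cdot)$ over that cell, so the Poincaré–Wirtinger inequality on a square of side $\eps$ in the plane (whose constant is a universal multiple of $\eps$) gives
\[ \int_{v+[-\eps/2,\eps/2]^2}\!\big|\partial^\alpha q(x+u)-(\partial^\alpha q)^{x,\eps}(x+u)\big|^2\,du \;\le\; C\eps^2\int_{v+[-\eps/2,\eps/2]^2}\!\big|\nabla\partial^\alpha q(x+u)\big|^2\,du . \]
Summing over $v$ and using translation-invariance yields $\|\partial^\alpha q-(\partial^\alpha q)^{x,\eps}\|_{L^2}^2\le C\eps^2\|\nabla\partial^\alpha q\|_{L^2(\mathbb{R}^2)}^2$, uniformly in $x$ and $\eps$. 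Since $\nabla\partial^\alpha q$ consists of derivatives of $q$ of order at most $2$, Assumption \ref{a.reg} guarantees $\nabla\partial^\alpha q\in L^2(\mathbb{R}^2)$, so the first term is at most $c\eps^2$. The second term is bounded identically, now working with $q_r$; the only extra point is that $\|\nabla\partial^\alpha q_r\|_{L^2}$ is bounded uniformly over $r\ge1$: by the Leibniz rule each $\partial^\beta q_r$ with $|\beta|\le2$ is a finite sum of products $\partial^{\beta_1}q\,\partial^{\beta_2}\chi_r$, and since $\chi_r$ and all its derivatives are bounded uniformly in $x$ and $r\ge1$, each such term is dominated by a single $r$-independent $L^2(\mathbb{R}^2)$ function (bounded near the origin, of order $|x|^{-(1+\delta)}$ at infinity). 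Adding the two bounds gives the lemma.

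The main obstacle is not any single hard inequality but rather checking the reduction carefully: one must justify the interchange of differentiation, convolution, and the passage between $W$ and $W^\eps$ (this is exactly where Assumption \ref{a.reg} is used, to place the relevant functions in $L^2$ and to ensure the series defining $f^\eps$ converges in $L^2$, as in Section \ref{s.td}), and one must keep all constants uniform in $r$ for the truncated field. The conceptual reason the bound reads $c\eps^2$ rather than $c\eps$ is precisely the linear-in-$\eps$ scaling of the Poincaré constant on a square of side $\eps$.
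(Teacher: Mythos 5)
Your proposal is correct and follows essentially the same route as the paper: reduce $\mathbb{E}[(\partial^\alpha(f-f^\eps))^2(x)]$ to $\|\partial^\alpha q-(\partial^\alpha q)^{x,\eps}\|_{L^2}^2$ via the white-noise isometry and \eqref{e.intconder}, then apply the cell-by-cell Poincar\'e--Wirtinger inequality (this is exactly Lemma \ref{l.pc}), with uniformity in $r$ coming from the uniform $H^1$ bound on $q_r$. The only cosmetic difference is that you inline the Poincar\'e step rather than invoking it as a separate lemma.
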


Before proving these lemmas, let us complete the proof of Proposition \ref{p.coupling}.

\begin{proof}[Proof of Proposition \ref{p.coupling}]
We prove only the second statement; the first is proven similarly. By assumption there are $c_1, c_2 > 0$ such that \eqref{e.ube1} holds with 
\[   G(x) = c_1 x^{-\beta} \ , \quad \beta > 1 .\]
Evaluating the integral $\int_{s > r} s G(s)^2 \, ds < \infty$, by Lemma \ref{l.ube1} there are $c_3, c_4 > 0$ such that, for $r \ge 1$, 
\[ \sup_{x \in \mathbb{R}^2} \sup_{|\alpha| < 1} \mathbb{E}[   (\partial^\alpha (f - f_r) )^2(x) ]    <  c_3 r^{2(1-\beta)}.  \]
Applying Lemma \ref{l.btis} we have the result as long as $R$ is sufficiently large. We deduce the result for all $R \ge 1$ by replacing $c_1$ with a suitably large constant.
\end{proof}

We now turn to the proof of Lemmas \ref{l.btis}--\ref{l.ube2}. The proof of Lemma \ref{l.btis} is an easy consequence of two classical results: Kolmogorov's theorem \cite[A.9]{nazarov2015asymptotic} and the Borell--TIS inequality (see \cite[Theorem 2.9]{azws} and \cite{adta_rfg}). 

\begin{proof}[Proof of Lemma \ref{l.btis}]
By stationarity and Kolmogorov's theorem, there is an absolute constant $c_1 > 0$ such that
\[   \mathbb{E}[ \| g \|_{\infty,B_1} ] < c_1 m   ,
\]
where $m$ is the constant defined in \eqref{e.m}. Moreover, by definition we have $\sup_{x \in B_1} \mathbb{E}[  g(x)^2 ] \leq m^2$. An application of the Borell--TIS inequality to the field $g|_{B_1}$ yields that, for each $u > 0$,
\[  \mathbb{P}[  \| g \|_{\infty,B_1}   \ge c_1 m  + u ] \le e^{ - u^2 / ( 2c_1^2 m^2)  } . \]
Setting $u = m(R_2 - c_1)$ and tiling $B_{R_1}$ with $\asymp  R_1^2$ disjoint boxes of size $1$ (since we can assume that $R_1 \ge 1$) yields, by stationarity and the union bound, that there exists $c_2 > 0$ such that
\[ \mathbb{P}[  \| g \|_{\infty,B_{R_1}}   \ge m R_2 ] \le c_2 R_1^2 e^{ - (R_2 - c_1)^2/ (2 c_1^2)  }.  \]
Setting $c_3 > 0$ to be sufficiently large such that for all $R_1 > c_2$ and for all $R_2 \ge \log R_1$, 
\[ \frac{(R_2 - c_1)^2}{2 c_1^2} - \log(  c_2 R_1^2  ) >   \frac{R_2^2}{4 c_1^2}  \, ,  \]
we have the result for $c$ the maximum of $4c_1^2$ and $c_2$.
\end{proof}

\begin{proof}[Proof of Lemma \ref{l.ube1}]
By stationarity, it is sufficient to prove the result for $x = 0$. Recall from Section \ref{s.overview} and Proposition \ref{p.areg} that $f - f_r = (q-q_r) \star W$, where $q_r = q \chi_r$, and also that  
\begin{equation}
\label{e.wn1}
 \partial^\alpha (f - f_r) =  (\partial^\alpha (q-q_r) ) \star W =  \int (\partial^\alpha (q-q_r)) (\cdot - u) \, dW(u) .
 \end{equation}
By standard properties of white-noise, the second moment of the integral in \eqref{e.wn1} is
\[  \int   (\partial^\alpha (q-q_r))^2 (\cdot - u)  \, du  .\]
Since $G$ is defined to satisfy, for all $x \in \mathbb{R}^2$,
\[ \sup_{|\alpha| \le 1 }  |\partial^\alpha q (x)|  < G(|x|)  , \]
and since the function $(1 - \chi_r)$ and all its derivatives are equal to zero on $B_r$ and uniformly bound elsewhere, this integral is at most
\[   c   \int_{|u| > r} G(|u|)^2 du \]
for a certain constant $c > 0$. After switching to polar coordinates, we have the result.
\end{proof}

Finally, for the proof of the Lemma \ref{l.ube2} we shall need the following auxiliary lemma:

\begin{lemma}[Piece-wise constant approximation]
\label{l.pc}
Let $g: \mathbb{R}^2 \to \mathbb{R}$ be a $C^1$ function that is also in the Sobolev space $H^1$ (i.e.\ for every multi-index~$\alpha$ such that $|\alpha| \le 1$, $\partial^\alpha g \in L^2$). Recall, for each $x \in \mathbb{R}^2$ and $\varepsilon > 0$, the piece-wise constant approximation $g^{x, \eps}$ defined in~\eqref{e.pwc}. Then there exists a constant $c > 0$, depending only on $\| g \|_{H^1}$, such that, for all $\eps > 0 $, 
\[   \sup_{x \in \mathbb{R}^2}  \| g - g^{x, \eps} \|_2^2 < c \eps^2  .  \]
\end{lemma}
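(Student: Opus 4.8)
The plan is to recognise the statement as a scaled-and-summed version of the Poincaré--Wirtinger inequality. First I would record that, by the very definition in~\eqref{e.pwc}, on each cell $C_v^{x,\eps} := x + v + [-\eps/2,\eps/2]^2$ of the shifted grid (indexed by $v \in \eps\Z^2$) the function $g^{x,\eps}$ is constant, equal to the average $\bar g_v := \eps^{-2}\int_{C_v^{x,\eps}} g(y)\,dy$. Since these cells tile $\R^2$ up to a Lebesgue-null set, this immediately gives the decomposition
\[
  \| g - g^{x,\eps} \|_2^2 \;=\; \sum_{v \in \eps\Z^2} \int_{C_v^{x,\eps}} \big| g(y) - \bar g_v \big|^2 \, dy .
\]

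Next I would bound each summand by a single-cell Poincaré estimate. The Poincaré--Wirtinger inequality on the unit square $U = [-1/2,1/2]^2$ provides an absolute constant $c_P > 0$ such that $\|h - \bar h_U\|_{L^2(U)}^2 \le c_P \|\nabla h\|_{L^2(U)}^2$ for every $h \in H^1(U)$. Applying this to $h(z) = g(x + v + \eps z)$ and changing variables — which contributes a factor $\eps^{-2}$ to the left-hand side and a factor $\eps^0$ to the right-hand side, after the gradient picks up an $\eps$ — yields, for every $v \in \eps\Z^2$,
\[
  \int_{C_v^{x,\eps}} \big| g(y) - \bar g_v \big|^2 \, dy \;\le\; c_P\, \eps^2 \int_{C_v^{x,\eps}} |\nabla g(y)|^2 \, dy .
\]
Summing over $v \in \eps\Z^2$ then gives
\[
  \| g - g^{x,\eps} \|_2^2 \;\le\; c_P\, \eps^2 \, \| \nabla g \|_{L^2(\R^2)}^2 \;\le\; c_P\, \eps^2 \, \| g \|_{H^1}^2 ,
\]
and one concludes with, say, $c = c_P \|g\|_{H^1}^2 + 1$ to obtain the strict inequality. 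Crucially $c_P$ depends on neither $x$ nor $\eps$ nor $v$, so the estimate is uniform in $x$, which is exactly the $\sup_x$ in the statement.

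I do not expect a genuine obstacle here: this is a textbook Poincaré estimate, and the only points deserving a line of care are (i) the $\eps^2$ scaling of the Poincaré constant under the dilation $y \mapsto x+v+\eps z$ (which follows by applying the change of variables to both sides of the unit-square inequality), and (ii) the finiteness of the summed right-hand side, which is precisely where the hypothesis $g \in H^1$ (hence $\nabla g \in L^2(\R^2)$) is used; the $C^1$ assumption on $g$ serves only to ensure that the pointwise cell averages $\bar g_v$ and the integrand are well defined. If one prefers to avoid quoting Poincaré--Wirtinger, the per-cell bound can instead be obtained directly from $g(y) - g(z) = \int_0^1 \nabla g\big(z + t(y-z)\big)\cdot(y-z)\,dt$ on the convex cell $C_v^{x,\eps}$, followed by Cauchy--Schwarz and Fubini; this is routine and I would relegate it to a one-line remark.
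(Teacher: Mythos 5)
Your proposal is correct and is essentially identical to the paper's own proof: decompose $\|g-g^{x,\eps}\|_2^2$ over the cells of the shifted lattice, apply the Poincar\'e--Wirtinger inequality on each cell (with the constant scaling as $\eps^2$ by the dilation argument you spell out), and sum, using $\nabla g \in L^2$. The only difference is that you make the change-of-variables bookkeeping explicit and offer an elementary alternative at the end, both of which the paper leaves implicit.
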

\begin{proof}
Fix $x \in \mathbb{R}^2$ and $\varepsilon > 0$. For each $v \in \eps \Z^2$ define the lattice box $D_v = x + v + [-\eps/2, \eps/2]^2$. Since $g^{x, \eps}$ is the mean of $g$ on $D_v$, the Poincar\'{e}--Wirtinger inequality implies the existence of an absolute constant $c' > 0$ such that
 \begin{equation}
 \label{e.pw}
  \int_{u \in D_v} (g - g^{x, \eps})^2  \, du \le  c' \eps^2   \int_{u \in D_v} |\nabla g|^2 \, du . 
  \end{equation}
Summing \eqref{e.pw} over $v \in \eps Z^2$ yields the result, since $g \in H^1$.
\end{proof}

\begin{proof}[Proof of Lemma \ref{l.ube2}]
Recall from Section \ref{s.overview} that we can express 
\[   (  f -  f^\eps )(x) = \int \left(  q - q^{x, \eps} \right) (x- u) \, dW(u)  . \]
Moreover, by Proposition \ref{p.areg} we have
\begin{eqnarray*}
( \partial^\alpha f - \partial^\alpha f^\eps )(x) & = & \int \partial^\alpha q(x-u) dW(u) - \int \partial^\alpha q(x-u) dW^\eps(u)\\
& = & \int \left(   (\partial^\alpha q)  - (\partial^\alpha q)^{x, \eps}  \right) (x- u) \, dW(u) ,
\end{eqnarray*}
where $(\partial^\alpha q)^{x, \eps}$ denotes the piece-wise constant approximation of the function $\partial^\alpha q$ defined in~\eqref{e.pwc}. The second moment of this quantity is equal to 
\[  \| \partial^\alpha q - (\partial^\alpha q)^{x, \eps} \|_2^2 . \]
Since $\partial^\alpha q \in H^1$, applying Lemma \ref{l.pc} yields the result. Moreover, since the constant in Lemma~\ref{l.pc} depends only on $\| \cdot \|_{H^1}$, and since $\|\partial^\alpha q_r\|_{H^1}$ is uniformly bounded over $r \ge 1$ and $|\alpha| \leq 1$ by the assumptions on $\chi_r$, the same proof works also for $f_r - f_r^\eps$.
\end{proof}


\bigskip

\section{Quasi-independence and RSW estimates}
\label{s.qirsw}

In this section we show how the white-noise representation for the field, combined with the Gaussian estimates in the previous section, provide a simple route to establishing quasi-independence for monotonic events. Using the approach of Tassion \cite{tassion2014crossing}, we then deduce RSW estimates at the zero level, i.e.\ the first statement of Theorem \ref{t.main2}. These estimates are also already enough to prove the first statement of Theorem \ref{t.main3}. Recall that we assume throughout that Assumption \ref{a.reg} holds, which guarantees in particular that \eqref{e.beta} holds for a given $\beta > 1$.

\medskip
We begin by stating a general comparison result that is a simple combination of Corollary \ref{c.cameron_martin} and Proposition \ref{p.coupling} (see Definition~\ref{d.sigma_al} for the notation for sub-$\sigma$-algebras):
\begin{proposition}
\label{p.perturbation}
Suppose that \eqref{e.beta} holds for $\beta > 1$, and suppose that there exists a neighbourhood $V$ of $0$ such that $\inf_V |\rho|  > 0$. Then there exist $c_1, c_2> 0$ and $R_0 > 0$ such that, for every $R \ge R_0$, $r \ge 1$ and $t \ge \log R$, every Borel set $D \subset \mathbb{R}^2$ of diameter at most $R$, and every monotonic event $A \in \mathcal{F}_{D}$,
\begin{equation}
\label{e.pqi}
\left| \Pro \left[  f \in A   \right] - \Pro \left[ f_r \in A \right]  \right| \leq c_1 R t \, r^{1-\beta} +  c_1 e^{-c_2 t^2}  .
\end{equation}
\end{proposition}

\begin{proof}
By Proposition \ref{p.coupling}, there exist $c_1, c_2 > 0$ such that,
\[ \Pro \left[ \| f - f_r \|_{\infty,B_R} \geq c_1  t \, r^{1-\beta}  \right] \leq c_1 e^{-c_2 t^2}  ,\]
and applying Corollary \ref{c.cameron_martin} we have result. 
\end{proof}

We now state our main quasi-independence result:

\begin{thm}[Quasi-independence]
\label{t.qi}
Suppose that \eqref{e.beta} holds for $\beta > 1$, and suppose that there exists a neighbourhood $V$ of $0$ such that $\inf_V|\rho|  > 0$. Then there exist $c_1, c_2 > 0$ and $R_0 > 0$ such that, for every $R_1 R_2 \ge R_0$, $r \ge 1$, $t_1 \ge \log R_1$ and $t_2 \ge \log R_2$, every pair of Borel sets $D_1 \subset \mathbb{R}^2$ (resp.\ $D_2$) of diameter at most $R_1$ (resp.\ $R_2$) and such that $r = \text{dist}(D_1, D_2)$, and every pair of monotonic events $A \in \mathcal{F}_{D_1}$ and $B \in \mathcal{F}_{D_2}$,
\begin{multline}
\label{e.qi}
\left| \Pro \left[  f \in A \cap B  \right] - \Pro \left[ f \in A \right] \Pro \left[ f \in B \right] \right|  \le c_1 R_1 t_1 r^{1-\beta} + c_1 e^{-c_2 t_1^2} + c_1 R_2 t_2 r^{1-\beta} + c_1 e^{-c_2 t_2^2} .
\end{multline}
\end{thm}

\begin{remark}
In the special case that $R_1$, $R_2$ and  $r$ are all of the same order $\Theta(R)$, equation~\eqref{e.qi} (with the setting $t_1 = \log R_1$ and $t_2 = \log R_2$) yields a simple bound on 
\begin{equation}
\label{e.qiorder}
 \left| \Pro \left[  f \in A \cap B  \right] - \Pro \left[ f \in A \right] \Pro \left[ f \in B \right] \right|
 \end{equation}
 of order
\[  R^{2-\beta} \log R  .\]
In particular, if \eqref{e.beta} holds for a given $\beta > 2$, then \eqref{e.qiorder} tends to zero as $R \to \infty$; hence our description of \eqref{e.qi} as verifying `asymptotic independence'. 
\end{remark}

\begin{remark}
Theorem \ref{t.qi} can be compared to other similar quasi-independence results that have appeared previously, both in the study of the components of the zero level set of Gaussian entire functions \cite{nsv_2007, nsv_2008, ns_2010}, and also in the study of crossing events for smooth Gaussian fields \cite{bg_16,bmw_17,rv_rsw,bm_17}. 

\medskip
Whereas the approaches to quasi-independence from \cite{bg_16,bmw_17,rv_rsw,bm_17} have proceeded by restricting the field to a lattice and deducing quasi-independence for this discrete field, our approach is to work directly in the continuum, first approximating by a field with exact independence, and then studying of the effect of the approximation on the probability of the events we are interested in. This two step approximation procedure can also be found in \cite{nsv_2007, nsv_2008, ns_2010}, but the approach and the events considered therein are very different.

\medskip
 Moreover, whereas the previous best known sufficient condition for quasi-independence was polynomial decay with exponent $\beta > 4$ \cite{rv_rsw}, we deduce asymptotic independence as long as correlations decay (roughly speaking) polynomially with exponent $\beta > 2$. More precisely (in the case $r=R_1=R_2=R$), \cite{rv_rsw} roughly implies that~\eqref{e.qi} holds for crossing events with the right-hand-side replaced by $c R^{-\beta} \times (\text{Total influence})^2$ where the term `$\text{Total influence}$' is the sum of $R^2$ influences defined in the spirit of the influences from Section~\ref{s.OSSSsub}. In \cite{rv_rsw}, this sum of influences was bound by $R^2$, although heuristics as in Remark~\ref{r.bernou} of the present paper suggest that this sum might be bounded by $R$. In fact, this is the idea that has guided us, even if we have followed a completely different approach to in \cite{rv_rsw}.
\end{remark}

Theorem~\ref{t.qi} is a direct consequence of the following more general proposition: 

\begin{prop}
\label{p.qi}
Suppose that \eqref{e.beta} holds for $\beta > 1$, and suppose that there exists a neighbourhood $V$ of $0$ such that $\inf_V |\rho|  > 0$. Then, there exist $c_1, c_2 > 0$ and $R_0 > 0$ such that, for every $R_1,R_2 \ge R_0$, $r \ge 1$, $t_1 \ge \log R_1$ and $t_2 \ge \log R_2$, every pair of Borel sets $D_1 \subset \mathbb{R}^2$ (resp.\ $D_2$) of diameter at most $R_1$ (resp.\ $R_2$) and such that $r = \text{dist}(D_1, D_2)$, every $n_1,n_2 \in \N$, all sets of monotonic events $A_1, \ldots, A_{n_1} \in \mathcal{F}_{D_1}$ and $B_1, \ldots, B_{n_2} \in \mathcal{F}_{D_2}$, and every event $A$ (resp.\ $B$) in the Boolean algebra generated by $A_1, \ldots, A_{n_1}$ (resp.\ $B_1, \ldots, B_{n_2}$),
\begin{multline}
\label{e.qi2}
\left| \Pro \left[  f \in A \cap B  \right] - \Pro \left[ f \in A \right] \Pro \left[ f \in B \right] \right|\\
\leq c_1 n_1 \left( R_1 t_1 r^{1-\beta} + e^{-c_2 t_1^2} \right)  + c_1 n_2 \left( R_2 t_2 r^{1-\beta}  + e^{-c_2 t_2^2} \right) .
\end{multline}
\end{prop}

\begin{remark}
Although in the present paper we do not need Proposition \ref{p.qi} in full generality, we believe it to be of independent interest, for instance it could be useful if one needs quasi-independence for non-monotonic events which are measurable with respect to a moderate number of monotonic events.
\end{remark}

\begin{proof}[Proof of Proposition~\ref{p.qi}]
Consider the truncated field $f_r$, for which $\{ f_r \in A \}$ is independent of $\{ f_r \in B\}$ by the definition of the events $A$ and $B$, i.e.,
\[  \Pro \left[ f_r \in A \cap B \right] = \Pro \left[ f_r \in A \right] \Pro \left[ f_r \in B \right]   .\]
Then
\begin{align*}
& \left| \Pro \left[ f \in A \cap B \right] - \Pro \left[ f \in A \right] \Pro \left[ f \in B \right] \right|\\
& \qquad = \left| \Pro \left[ f \in A \cap B \right] - \Pro \left[ f_r \in A \cap B \right] + \Pro \left[ f_r \in A \right] \Pro \left[ f_r \in B \right] - \Pro \left[ f \in A \right] \Pro \left[ f \in B \right]  \right| \\
& \qquad \leq \left| \Pro \left[ f \in A \cap B \right] - \Pro \left[ f_r \in A \cap B \right] \right| + \left| \Pro \left[ f_r \in A \right] -  \Pro \left[ f \in A \right] \right| + \left| \Pro \left[ f_r \in B \right] -  \Pro \left[ f \in B \right] \right|
\end{align*}
(indeed, if $a,b,a',b' \in [0,1]$ then $|aa'-bb'| \leq |a-a'|+|b-b'|$). Now, note that
\[
\{ f \in A \cap B \} \bigtriangleup \{ f_r \in A \cap B \} \subseteq \left( \{ f \in A \} \bigtriangleup \{ f_r \in A \} \right) \cup \left( \{ f \in B \} \bigtriangleup \{ f_r \in B \} \right) ,
\]
and so
\[
\left| \Pro \left[ f \in A \cap B \right] - \Pro \left[ f \in A \right] \Pro \left[ f \in B \right] \right| \leq 2 \Pro \left[ f \in A \bigtriangleup f_r \in A \right] + 2 \Pro \left[ f \in B \bigtriangleup f_r \in B \right] .
\]
Similarly, note that
\[
\Pro \left[ f \in A \bigtriangleup f_r \in A \right] \leq \sum_{i=1}^{n_1} \Pro \left[ f \in A_i \bigtriangleup f_r \in A_i \right] , 
\]
and analogously for $\Pro \left[ f \in B \bigtriangleup f_r \in B \right]$. As a result
\[
\left| \Pro \left[ f \in A \cap B \right] - \Pro \left[ f \in A \right] \Pro \left[ f \in B \right] \right| \leq 2 \sum_{i=1}^{n_1} \Pro \left[ f \in A_i \bigtriangleup f_r \in A_i \right] + 2 \sum_{j=1}^{n_2} \Pro \left[ f \in B_j \bigtriangleup f_r \in B_j \right] .
\]
Applying Proposition \ref{p.perturbation}, and since $A$, $B$ are monotonic, we have
\[
\Pro \left[ f \in A_i \bigtriangleup f_r \in A_i \right] \leq c_1 R_1 t_1  r^{1-\beta} + c_1 e^{-c_1 t_1^2} ,
\]
and similarly for $\Pro \left[ f \in B_j \bigtriangleup f_r \in B_j \right]$, which completes the proof.
\end{proof}

To finish the section we use Theorem \ref{t.qi} to deduce RSW estimates at the zero level $\ell =0$ in the case that, additionally, Assumption~\ref{a.sym} and the weak positivity condition in Assumption~\ref{a.swp} holds. For this we rely on the strategy of~\cite{tassion2014crossing} (see Section~4 of~\cite{rv_rsw} for more detail). These estimates constitute the first statement of Theorem~\ref{t.main2}.

\begin{thm}[RSW estimates]
\label{t.rsw}
Suppose that Assumption \ref{a.sym} holds, that the weak positivity condition in Assumption \ref{a.swp} holds, and that Assumption \ref{a.dec} holds for a given $\beta > 2$. Then for each quad $Q$,
\[     \inf_{R > 0} \, \mathbb{P}[ f \in \cross_0(RQ) ]    > 0  \quad \text{and} \quad    \sup_{R > 0}  \, \mathbb{P} [f \in \cross_0(RQ) ]  < 1,  \]
and moreover there exist $c, d > 0$ such that, for each $1 \leq r \leq R$,
\[   \mathbb{P} \left[   f   \in \arm_0(r, R  ) \right]     <  c \left( \frac{r}{R} \right)^{d}.  \] 
\end{thm}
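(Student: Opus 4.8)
The plan is to derive both statements from the general renormalisation scheme of Tassion~\cite{tassion2014crossing}, in the form already adapted to smooth Gaussian fields in Section~4 of~\cite{rv_rsw}. That scheme requires three inputs: positive association of the increasing crossing events; a self-duality identity forcing a square to be crossed with probability exactly $1/2$; and a quasi-independence statement for crossing events supported on well-separated regions. Positive association is supplied by Proposition~\ref{p.aswp} and the remark immediately following it, applied to the events $\cross_0(\cdot)$ and the auxiliary events of Tassion's construction, all of which are increasing in $f$ and measurable by Proposition~\ref{p.areg}. Quasi-independence is Theorem~\ref{t.qi} (whose hypothesis $\inf_V|\rho|>0$ on a neighbourhood $V$ of the origin holds here by Proposition~\ref{p.aswp}, since we assume weak positivity and $\beta>2$): when $R_1$, $R_2$ and the separation are all comparable to a scale $\varrho$, the error in~\eqref{e.qi} is $O(\varrho^{2-\beta}\log\varrho)$, which tends to $0$ and is summable along geometric sequences of scales precisely because $\beta>2$. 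This is the only place the hypothesis $\beta>2$ is used, and it is what lets us work below the threshold $\beta>4$ of earlier approaches.

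First I would record the self-duality identity. Let $S$ be a square with marked arcs $\gamma,\gamma'$ a pair of opposite sides; the dual quad $S^\ast$ is then $S$ with the other pair of sides marked. Proposition~\ref{p.areg} gives that almost surely $\{f\in\cross_0(S)\}=\{f\notin\cross_0^\ast(S)\}$, so $\mathbb{P}(f\in\cross_0(S))=1-\mathbb{P}(f\in\cross_0^\ast(S))$. On the other hand, $\cross_0^\ast(S)$ is the event that $\mathcal{E}_0^c$ crosses $S^\ast$; applying $f\mapsto-f$ replaces $\mathcal{E}_0^c=\{f<0\}$ by $\{f>0\}$, which has the same connected components as $\mathcal{E}_0$ up to the null set $\mathcal{L}_0$ (again Proposition~\ref{p.areg}), and then stationarity together with the $\pi/2$-rotation symmetry of Assumption~\ref{a.sym} turns a crossing of $S^\ast$ into a crossing of $S$; since $f\stackrel{d}{=}-f$ this gives $\mathbb{P}(f\in\cross_0^\ast(S))=\mathbb{P}(f\in\cross_0(S))$. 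Combining the two, $\mathbb{P}(f\in\cross_0(S))=1/2$.

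With these ingredients, Tassion's construction applies essentially verbatim at level $0$: its auxiliary events — crossings of rectangles, crossings constrained to meet prescribed portions of the boundary, and so on — are all increasing in $f$, so both positive association and the estimates of Theorem~\ref{t.qi} are available for them, and one obtains that for every quad $Q$ there are constants $0<\alpha_-\le\alpha_+<1$ with $\alpha_-\le\mathbb{P}(f\in\cross_0(sQ))\le\alpha_+$ for all $s>0$, which is the first display of the theorem. The one-arm bound then follows by the classical annulus argument: split $\ann_{r,R}$ into $\asymp\log(R/r)$ concentric annuli of bounded aspect ratio whose cores are mutually separated at distances comparable to their sizes; by the RSW estimates, positive association, and the duality of Proposition~\ref{p.areg}, each such annulus contains a circuit of $\mathcal{E}_0^c$ surrounding the inner disc with probability at least some $p_0>0$, uniformly; on $\arm_0(r,R)$ none of these circuits exists, since a connected component of $\mathcal{E}_0$ joining $\partial B_r$ to $\partial B_R$ cannot cross a circuit of $\mathcal{E}_0^c$; and peeling the annuli off one at a time while invoking Theorem~\ref{t.qi} at each step (the accumulated errors being summable because $\beta>2$) yields $\mathbb{P}(f\in\arm_0(r,R))\le c_1(r/R)^{c_2}$ for suitable $c_1,c_2>0$.

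The delicate point throughout is that Tassion's scheme, and likewise the annulus argument, is designed for models enjoying genuine finite-range independence, whereas here only the quantitative quasi-independence of Theorem~\ref{t.qi} is available, with errors that must be carried through both the renormalisation iteration and the peeling step. The main obstacle is therefore the verification that these errors remain summable along the geometric sequences of scales that arise — which is exactly the content of the assumption $\beta>2$ — after which the proof reduces to the bookkeeping already performed in~\cite{tassion2014crossing} and Section~4 of~\cite{rv_rsw}.
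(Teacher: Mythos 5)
Your proof is correct and follows essentially the same approach as the paper: both proofs apply Tassion's renormalisation scheme as in \cite{tassion2014crossing} and Section~4 of \cite{rv_rsw}, verifying the three required inputs (FKG from weak positivity via Proposition~\ref{p.aswp}, sufficient symmetry from Assumption~\ref{a.sym}, and the quasi-independence estimate from Theorem~\ref{t.qi} using $\beta>2$). Your write-up merely unpacks a bit more of what the paper delegates to those references, namely the self-duality identity $\mathbb{P}(f\in\cross_0(S))=1/2$ for squares and the annulus argument for the one-arm bound.
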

\begin{proof}
Given that quasi-independence holds by Theorem \ref{t.qi} (recall that the fact that there exists a neighbourhood $V$ of $0$ such that $\inf_V |\rho|  > 0$ is implied by weak positivity and $\beta > 2$; see Proposition \ref{p.aswp}), this follows directly from the arguments in \cite{tassion2014crossing}. More precisely, Tassion's argument relies on three conditions being satisfied (see~\cite[Section 4]{rv_rsw} and \cite[Section 4.2]{bg_16} for details): 
\begin{enumerate}
\item The FKG inequality for crossing-type events (that holds since $\kappa \geq 0$); 
\item Sufficient symmetry, which is guaranteed by Assumption \ref{a.sym}; and 
\item At only one place in the proof, the following quasi-independence property (see \cite[Lemma 4.3]{rv_rsw}): For every $\delta > 0$ and $C>0$ there exists $R_0 > 0$ such that, for every $R > R_0$, every pair $D_1$ and $D_2$ of Borel subsets of the plane of diameter at most~$CR$ and at distance at least $R$ from each other, and every pair of monotonic events $A \in \mathcal{F}_{D_1}$ and $B \in \mathcal{F}_{D_2}$,
\begin{equation} \label{e.tassion}
\left| \Pro \left[ f \in A \cap B \right] - \Pro \left[ f \in  A \right] \Pro \left[ f \in B \right] \right| \leq \delta .
\end{equation}
\end{enumerate}
Since \eqref{e.tassion} is implied by Theorem~\ref{t.qi} (recall that we assume $\beta > 2$), these conditions are satisfied and Tassion's arguments are valid.
\end{proof}

\begin{remark}
As shown in \cite[Section 4.2]{bg_16}, in light of Theorem \ref{t.rsw} and the quasi-independence in Theorem \ref{t.qi}, the zero level set~$\mathcal{L}_0$ also satisfies equivalents of the RSW estimates.
\end{remark}

We finish this section by showing how to deduce the first statement of Theorem \ref{t.main3} from the RSW estimates and our analysis in Section \ref{s.gauss}.

\begin{proof}[Proof of the first statement of Theorem \ref{t.main3}]
Let $c_1 > 1$ be given. By Corollary \ref{c.cameron_martin}, for each quad $Q$ there exists a constant $c_2 > 0$ such that for each $R > 0$,
\[ 0 \le   \mathbb{P}[ \cross_0(RQ) ] -  \mathbb{P}_{R^{-c_1}}[ \cross(RQ ) ]  < c_2 R^{1-c_1}, \]
and so, in particular, as $R \to \infty$,
\[  \left| \mathbb{P}[ \cross_0(RQ) ] -  \mathbb{P}[ \cross_{R^{-c_1}}(RQ ) ]  \right| \to 0 .  \]
Combining  with the RSW estimates in Theorem \ref{t.rsw}, we have the result.
\end{proof}

\bigskip
\section{A first description of the phase transition}
\label{s.osss}

In this section we begin our study of the sharp phase transition, establishing a first description of the phase transition for a single rectangle at levels $\ell > 0$ which are polynomially small in the scale of the rectangle; in the final section we will bootstrap this to complete the main results. Throughout this section we suppose that Assumptions \ref{a.reg} and \ref{a.sym} hold, that the strong positivity condition in Assumption~\ref{a.swp} holds, and that Assumption~\ref{a.dec} holds for a given $\beta > 2$ that is henceforth fixed.

\medskip
Let us begin by introducing streamlined notation for crossing events involving rectangles. For $\rho_1,\rho_2 > 0$, let $\cross_\ell(\rho_1,\rho_2)$ denote the crossing event $\cross_\ell(Q)$ in the case that $Q = (D; \gamma, \gamma')$, where $D$ is the rectangle $[0,\rho_1] \times [0,\rho_2]$ and $\gamma$ and $\gamma'$ are respectively the left and right sides $\{0\} \times [0, \rho_2]$ and $\{\rho_1\} \times [0, \rho_2]$. We also define $\cross_\ell^\ast(\rho_1,\rho_2)$ for the crossing event $\cross^\ast_\ell(Q)$ introduced in Section \ref{s.gauss} that corresponds to this quad.

\medskip
In this section we will identify $f = f_r$ for the setting~$r = \infty$. The main result of the section is the following:

\begin{theorem}
\label{t.phase_trans}
There exists a constant $\theta > 0$ such that, as $R \to \infty$, 
\[ \inf_{\bar r \in [ R^{\theta}, \infty] }  \Pro \left[ f_{\bar r} \in \cross_{R^{-\theta}}(2R,R) \right] \to  1  . \]
\end{theorem}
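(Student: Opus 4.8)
The plan is to adapt the Duminil--Copin--Raoufi--Tassion strategy for sharp thresholds to the continuum, applied to the truncated and discretised field $f^{\eps}_{\bar r}=q_{\bar r}\star W^{\eps}$ introduced in Section~\ref{s.overview}. Fix exponents to be tuned later, let $\eps=\eps(R)$ decay polynomially in $R$, and introduce a further truncation radius $r^{\ast}=r^{\ast}(R)$ growing polynomially but slowly; restricted to a fixed neighbourhood of $[0,2R]\times[0,R]$, the field $f^{\eps}_{\min(\bar r,r^{\ast})}$ is a function of finitely many i.i.d.\ standard Gaussians $(\eta_v)$, in number polynomial in $R$. Two structural observations drive the argument. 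First, by Proposition~\ref{p.coupling} the quantities $\|f_{\bar r}-f^{\eps}_{\bar r}\|_{\infty,B_{CR}}$ and $\|f_{\bar r}-f_{r^{\ast}}\|_{\infty,B_{CR}}$ (the latter for $\bar r\ge r^{\ast}$) are, with probability $1-c_1e^{-c_2\log^2 R}$, bounded by quantities which, for the right choice of $\eps$ and $r^{\ast}$, are much smaller than the target level $R^{-\theta}$. Second, since $q\ge 0$ the crossing event $\cross_u(2R,R)$ is increasing in each $\eta_v$, and because the coefficient sum $\sum_v c_v(x)$ in the representation $f^{\eps}_{\bar r}(x)=\sum_v c_v(x)\eta_v$ does not depend on $x$, raising the level by $u$ is \emph{exactly} the same as shifting the common mean of the $\eta_v$ by an amount proportional to $u$; thus $u\mapsto\Pro\bigl[f^{\eps}_{\bar r}\in\cross_u(2R,R)\bigr]$ is the probability of a \emph{fixed} increasing event $A=\cross_0$ under i.i.d.\ Gaussians whose mean moves linearly in $u$.

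The first genuine step is to transfer RSW estimates and one-arm decay to $f^{\eps}_{\bar r}$ at the polynomially small level $u\le R^{-\theta}$, uniformly in $R$ and in $\bar r\in[R^{-\theta},\infty]$. For $\bar r$ large this follows from the RSW estimates for $f$ at level $0$ (Theorem~\ref{t.rsw}), the sup-norm comparison, and the monotonicity of excursion sets in the level; for $\bar r$ bounded, $f_{\bar r}$ has compactly supported covariance, so Theorem~\ref{t.rsw} applies to it directly, and uniformity as $\bar r\to0$ comes from a spatial rescaling under which $f_{\bar r}$ is close to a fixed field. To pass from level $0$ to level $R^{-\theta}$ I would use Corollary~\ref{c.cameron_martin}, which transfers a crossing probability at scale $s$ with an error $O(sR^{-\theta})$ --- negligible as long as $s\ll R^{\theta}$ --- and then quasi-independence (Theorem~\ref{t.qi}, whose errors tend to $0$ because $\beta>2$) to deduce, for $u\le R^{-\theta}$, that $\Pro\bigl[f^{\eps}_{\bar r}\in\arm_u(O(1),s)\bigr]\le c\,s^{-d}$ for scales $s$ up to some $R^{\theta'}$ with $\theta'<\theta$, hence $\le R^{-\theta' d}$ for all $s\le R$.

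Next I would apply the OSSS inequality (Theorem~\ref{t.osss}) to $\cross_u(2R,R)$ in the variables $(\eta_v)$, with a Schramm--Steif-type crossing algorithm --- reveal the field on a vertical strip at a uniformly random horizontal position, then explore the clusters of $f^{\eps}_{\bar r}$ relative to the level $u$ outward --- chosen so that each revealment is controlled by the one-arm estimates above and hence by a quantity $\Delta=\Delta(R)$ decaying polynomially in $R$. On the other side, a Russo-type formula for i.i.d.\ Gaussians (in the spirit of Remark~\ref{r.bernou}), the elementary bound $2\Phi(\tau)\bigl(1-\Phi(\tau)\bigr)\le C\phi(\tau)$, and the increasingness of $\cross_u$ in the $\eta_v$ together bound $\sum_v I_v(\cross_u)$ by a constant times $\eps^{-1}\tfrac{d}{du}\Pro\bigl[f^{\eps}_{\bar r}\in\cross_u(2R,R)\bigr]$ (the $\eps^{-1}$ reflecting that the level moves each Gaussian mean by only $O(\eps)$). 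Feeding both into OSSS and using $\var(\un_{\cross_u})=\Pro[\cross_u](1-\Pro[\cross_u])$ gives a differential inequality of the form
\[
\frac{d}{du}\Pro\bigl[f^{\eps}_{\bar r}\in\cross_u(2R,R)\bigr]\ \ge\ \frac{c\,\eps}{\Delta}\,\Pro\bigl[f^{\eps}_{\bar r}\in\cross_u(2R,R)\bigr]\Bigl(1-\Pro\bigl[f^{\eps}_{\bar r}\in\cross_u(2R,R)\bigr]\Bigr).
\]
Since the transferred RSW estimates give $\Pro\bigl[f^{\eps}_{\bar r}\in\cross_0(2R,R)\bigr]\ge c_0>0$ uniformly, integrating from $0$ to $R^{-\theta}$ forces the crossing probability above $1-o(1)$, \emph{provided} the polynomial orders of $\eps$, $r^{\ast}$ and of $\Delta$ are arranged so that $\eps\,\Delta^{-1}R^{-\theta}\to\infty$. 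Finally, sprinkling the level back from $f^{\eps}_{\min(\bar r,r^{\ast})}$ to $f_{\bar r}$ using the first structural observation, and taking the infimum over $\bar r$, yields the statement.

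The main obstacle --- and the technical heart of the section --- is the joint bookkeeping of exponents in the last two steps: one must exhibit a crossing algorithm whose revealments on the \emph{smooth, discretised} field $f^{\eps}_{\bar r}$ are genuinely small (in particular, resolving the convolution $q_{\bar r}\star W^{\eps}$ near the interface must not force too many boxes to be revealed), and one must then choose the orders of $\eps$, $r^{\ast}$ and of the level $R^{-\theta}$ so that the discretisation error, the truncation error, and the a priori uncertainty about the near-critical window (only Theorem~\ref{t.main3} is available, so one can only assume one-arm decay up to a scale $\sim R^{\theta'}$, not up to $R$) are all controlled while the rate $c\eps\Delta^{-1}$ still integrates to infinity over a window of polynomial width. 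This is precisely where $\beta>2$ (to make the quasi-independence errors vanish, hence the RSW bootstrap and one-arm decay available) and the strong positivity $q\ge 0$ (to align the OSSS influences with the level-derivative of the crossing probability) enter.
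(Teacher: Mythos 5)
Your proposal follows essentially the same route as the paper: truncate and discretise via the white-noise representation, transfer RSW and one-arm estimates to the discretised field at a polynomially small positive level via the sup-norm coupling (the paper's ``sprinkling'', Proposition~\ref{p.sprink}), run a Schramm--Steif-type exploration algorithm for OSSS whose revealments are bounded by one-arm probabilities, link the OSSS influences to a Russo-type level-derivative using $q\ge 0$ and the increasingness of crossings in the $\eta_v$, and integrate the resulting differential inequality over a window of width $R^{-\theta}$. Your exponent condition $\eps\,\Delta^{-1}R^{-\theta}\to\infty$ is exactly the paper's constraint $d(1-h)-\gamma-\theta>0$, and your observation that $\sum_v c_v(x)$ is independent of $x$ (so that shifting the level is a common mean shift of the $\eta_v$) is precisely the computation underlying Proposition~\ref{p.russo}.

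Two caveats worth flagging. First, the paper works with a single truncation radius $r=R^h$ and establishes the conclusion for $\bar r\in[R^h,\infty]$ by comparing $f_{\bar r}$ in sup-norm to the one fixed approximant $f_{R^h}^{R^{-\gamma}}$; the coupling error in Proposition~\ref{p.coupling} is of order $(\log R)\,\bar r^{1-\beta}$, which \emph{grows} as $\bar r$ decreases, so small $\bar r$ is the harder regime, and your proposed detour through RSW for $f_{\bar r}$ with $\bar r$ bounded or tending to $0$ via a spatial rescaling is neither substantiated nor needed by the paper's argument (there is no scaling invariance of $f_{\bar r}$ in $\bar r$ to invoke). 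Second, the paper in fact applies OSSS to the event $\{f^\eps_r\notin\cross^\ast_\ell(2R,R)\}$ rather than to $\{f^\eps_r\in\cross_\ell(2R,R)\}$: because $f^\eps_r$ is degenerate and only lattice-stationary, the almost sure equality between the primal and complementary-dual crossing events is not available for it, a technicality your algorithm description elides.
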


As described in Section \ref{s.overview}, we prove Theorem \ref{t.phase_trans} by applying the OSSS inequality to (the white-noise representation of) the truncated discretised field $f^\eps_{r}$ and the complement of the event $\cross^\ast_{\ell}(2R, R)$, where $r = R^{h}$ and $\eps = R^{-\gamma}$ for $h, \gamma > 0$ well-chosen constants. While it would have been more natural to work with the event $\cross_{\ell}(2R, R)$, and although we expect that the events  $\{f_r^\eps \notin \cross^*_{\ell}(2R,R)\}$ and $\{f_r^\eps \in  \cross_{\ell}(2R,R)\}$ are equal almost surely (as is the case for $f$ and $f_r$ for instance; see Proposition \ref{p.areg}), since we lack a proof of this (the field $f_r^\eps$ is degenerate and is not stationary, so standard arguments do not apply), we must take care to distinguish these events.

\subsection{Sprinkling}
\label{s.infl}
We begin the proof of Theorem \ref{t.phase_trans} with a `sprinkling' procedure that yields RSW-type estimates for the excursion set $\mathcal{E}_\ell$ of $f_r^\eps$ at polynomially-small levels.

\medskip
For each $0 < \rho_1 < \rho_2$ and $x \in \mathbb{R}^2$, let $\arm_\ell(x;\rho_1,\rho_2)$ (resp.\ $\arm_\ell^*(x;\rho_1,\rho_2)$) denote the event that there is a connected component of $\mathcal{E}_\ell$ (resp.\ $\mathcal{E}^c_\ell$) that intersects both $\partial B_{\rho_1}(x)$ and $\partial B_{\rho_2}(x)$. 

\begin{prop}\label{p.sprink}
We have the following two `sprinkling' properties:
\begin{enumerate}
\item[i)] Let $d$ be the constant appearing in Theorem \ref{t.rsw}. There exist constants $c_1,c_2>0$ such that, for each $h,\gamma > 0$ and each $\theta \in (0,\min \{ \gamma,(\beta-1)h \}]$, there is a constant $R_0 \geq 1$ such that the following holds: For every $R \geq R_0$, $1 \leq \rho_1 \leq \rho_2 \leq R$, $x \in \R^2$, and $\ell \ge R^{-\theta}$,
\[ \Pro \left[  f_{R^h}^{R^{-\gamma}} \notin  \cross^*_{\ell}(2R,R) \right] > c_1 \quad \text{and} \quad \Pro \left[ f_{R^h}^{R^{-\gamma}} \in \arm^\ast_{\ell}(x;\rho_1,\rho_2) \right] < c_2 \left( \frac{\rho_1}{\rho_2} \right)^{d}   . \]
\item[ii)] Moreover, for each $h,\gamma > 0$ and each $\theta \in (0,\min \{ \gamma,(\beta-1)h \}]$,
\[ \sup_{\ell \geq R^{-\theta}} \sup_{\bar r \in [ R^h, \infty] }   \Pro \left[   \{ f_{R^h}^{R^{-\gamma}} \notin \cross^\ast_{\ell}(2R,R) \} \setminus  \{f_{\bar r} \in \cross_{2 \ell}(2R,R) \} \right] \underset{R \rightarrow \infty}{\longrightarrow} 0  . \]
\end{enumerate} 
\end{prop}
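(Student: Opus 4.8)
The plan is to combine three inputs: the RSW and one‑arm estimates of Theorem~\ref{t.rsw} for $f$ at level $0$; the super‑polynomial sup‑norm comparisons of Proposition~\ref{p.coupling}; and elementary planar topology valid for \emph{arbitrary} continuous functions. The last point is essential, because $f_{R^h}^{R^{-\gamma}}$ is degenerate and only lattice‑stationary, so Proposition~\ref{p.areg} does not apply to it and we may not use the self‑duality $\{g\in\cross_\ell(Q)\}=\{g\notin\cross_\ell^\ast(Q)\}$ for $g=f_{R^h}^{R^{-\gamma}}$ — only the one‑sided facts that a primal and a dual crossing cannot coexist, and the Bollob\'{a}s--Riordan dichotomy. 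The mechanism throughout is \emph{sprinkling}: raising the level by $\ell\ge R^{-\theta}$ absorbs a sup‑norm perturbation of size $o(\ell)$, and the hypothesis $\theta\le\min\{\gamma,(\beta-1)h\}$ is chosen precisely so that the truncation error ($\asymp R^{-(\beta-1)h}$ on $B_{3R}$, by Proposition~\ref{p.coupling}) and the discretisation error ($\asymp R^{-\gamma}$) are both at most $\ell$, up to logarithms.

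\textbf{Part (i).} Since $\cross_\ell^\ast$ and $\arm_\ell^\ast$ are decreasing in $\ell$, it suffices to take $\ell=R^{-\theta}$. For the crossing bound, the inclusion $\{g\in\cross_\ell(Q)\}\subseteq\{g\notin\cross_\ell^\ast(Q)\}$ reduces matters to lower‑bounding $\Pro[f_{R^h}^{R^{-\gamma}}\in\cross_{R^{-\theta}}(2R,R)]$; one then sprinkles from $f$ at level $0$, noting that on $\{f\in\cross_0(2R,R)\}\cap\{\|f-f_{R^h}^{R^{-\gamma}}\|_{\infty,B_{3R}}<R^{-\theta}\}$ any connected $\{f\ge0\}$‑crossing of $[0,2R]\times[0,R]$ is a $\{f_{R^h}^{R^{-\gamma}}\ge-R^{-\theta}\}$‑crossing of the same rectangle, so this intersection lies in $\{f_{R^h}^{R^{-\gamma}}\in\cross_{R^{-\theta}}(2R,R)\}$. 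Theorem~\ref{t.rsw} bounds $\Pro[f\in\cross_0(2R,R)]$ below, while Proposition~\ref{p.coupling} bounds $\Pro[\|f-f_{R^h}\|_{\infty,B_{3R}}+\|f_{R^h}-f_{R^h}^{R^{-\gamma}}\|_{\infty,B_{3R}}\ge R^{-\theta}]$ by $c_1e^{-c_2\log^2R}$, which is $<\tfrac12\Pro[f\in\cross_0(2R,R)]$ for $R\ge R_0$. For the one‑arm bound, use
\[
\{f_{R^h}^{R^{-\gamma}}\in\arm_\ell^\ast(x;\rho_1,\rho_2)\}\subseteq\{f\in\arm_{\ell/2}^\ast(x;\rho_1,\rho_2)\}\cup\{\|f-f_{R^h}^{R^{-\gamma}}\|_{\infty,B_{\rho_2}(x)}\ge\ell/2\},
\]
together with $\Pro[f\in\arm_{\ell/2}^\ast(x;\rho_1,\rho_2)]\le\Pro[f\in\arm_0^\ast(x;\rho_1,\rho_2)]=\Pro[f\in\arm_0(\rho_1,\rho_2)]<c(\rho_1/\rho_2)^d$ (by stationarity, $f\distr-f$, and the fact that $\mathcal{L}_0$ is a.s.\ a collection of curves), and the fact that $c_1e^{-c_2\log^2R}\le(\rho_1/\rho_2)^d$ once $R\ge R_0$ since $\rho_2\le R$. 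Here the sup‑norm over $B_{\rho_2}(x)$ is reduced to one over a ball of radius at most $2R$ centred at the origin using the joint lattice‑stationarity of $(f_{R^h},f_{R^h}^{R^{-\gamma}})$.

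\textbf{Part (ii).} First a deterministic step: on $\{f_{R^h}^{R^{-\gamma}}\notin\cross_\ell^\ast(2R,R)\}$ no component of the open set $\{f_{R^h}^{R^{-\gamma}}<-\ell\}$ crosses $[0,2R]\times[0,R]$ from top to bottom, so by the Bollob\'{a}s--Riordan dichotomy the closed set $\{f_{R^h}^{R^{-\gamma}}\ge-\ell\}\cap([0,2R]\times[0,R])$ has a connected component $C$ joining the left and right sides. If moreover $\|f_{\bar r}-f_{R^h}^{R^{-\gamma}}\|_{\infty,B_{3R}}<\ell$, then $f_{\bar r}>-2\ell$ on $C$, hence $C\subseteq\{f_{\bar r}\ge-2\ell\}$ and $f_{\bar r}\in\cross_{2\ell}(2R,R)$. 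Thus the difference set is contained in $\{\|f_{\bar r}-f_{R^h}^{R^{-\gamma}}\|_{\infty,B_{3R}}\ge\ell\}$, and it remains to show
\[
\sup_{\ell\ge R^{-\theta}}\ \sup_{\bar r\in[R^h,\infty]}\ \Pro\!\left[\|f_{\bar r}-f_{R^h}^{R^{-\gamma}}\|_{\infty,B_{3R}}\ge R^{-\theta}\right]\ \underset{R\to\infty}{\longrightarrow}\ 0 .
\]
Split $\|f_{\bar r}-f_{R^h}^{R^{-\gamma}}\|\le\|f_{\bar r}-f\|+\|f-f_{R^h}\|+\|f_{R^h}-f_{R^h}^{R^{-\gamma}}\|$ on $B_{3R}$; by Proposition~\ref{p.coupling}, using $\bar r^{1-\beta}\le(R^h)^{1-\beta}$ uniformly in $\bar r\ge R^h$ (the last two terms being independent of $\bar r$, and the first vanishing when $\bar r=\infty$), this sum is at most $c_1(\log R)(R^{-(\beta-1)h}+R^{-\gamma})$ outside an event of probability $c_1e^{-c_2\log^2R}$; since $\theta\le\min\{\gamma,(\beta-1)h\}$ this bound is $o(R^{-\theta})$ (the endpoint case being more delicate; see below), hence eventually $<R^{-\theta}\le\ell$, which completes the argument.

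\textbf{Main obstacle.} Two points demand care. The degeneracy of $f_{R^h}^{R^{-\gamma}}$ is why the statement is phrased via $\cross_\ell^\ast$, and why Part (ii) can only produce a crossing at the \emph{doubled} level $2\ell$: without self‑duality one has available only the two model‑independent topological facts used above. The more delicate point is scale‑matching: the sup‑norm perturbations are of order $R^{-(\beta-1)h}+R^{-\gamma}$ only up to a logarithmic factor (coming from the supremum over a macroscopic ball), whereas the room available for sprinkling is exactly $R^{-\theta}$. For $\theta$ strictly below $\min\{\gamma,(\beta-1)h\}$ the logarithmic loss is harmless, since $R^{\min\{\gamma,(\beta-1)h\}-\theta}/\log R\to\infty$, and the argument above runs verbatim; at the endpoint $\theta=\min\{\gamma,(\beta-1)h\}$ one would need a sharper estimate (or, as suffices for the use of this proposition in Theorem~\ref{t.phase_trans}, simply restrict $\theta$ to the open interval).
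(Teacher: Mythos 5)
Your proof is correct in its core mechanism and uses the same three inputs as the paper (the RSW and one-arm bounds of Theorem~\ref{t.rsw}, the sup-norm comparison of Proposition~\ref{p.coupling}, and the observation that sprinkling by $\ell\geq R^{-\theta}$ absorbs a perturbation of size $O((\log R)(R^{-(\beta-1)h}+R^{-\gamma}))$). Where you diverge is in how the primal event $\{f_{\bar r}\in\cross_{2\ell}\}$ is reached in Part~(ii). The paper sprinkles the \emph{dual} event directly: since $\{\cdot\notin\cross^\ast_\ell\}$ is increasing, on $\{\|f_{\bar r}-f^{R^{-\gamma}}_{R^h}\|_{\infty,B_R}<\ell\}$ one has $\{f^{R^{-\gamma}}_{R^h}\notin\cross^\ast_\ell\}\subseteq\{f_{\bar r}\notin\cross^\ast_{2\ell}\}$, and only \emph{then} converts to the primal crossing using the almost-sure equality $\{f_{\bar r}\notin\cross^\ast_{2\ell}\}=\{f_{\bar r}\in\cross_{2\ell}\}$ from Proposition~\ref{p.areg}, which is available because $f_{\bar r}$ is non-degenerate and stationary. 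You instead extract a primal crossing of $f^{R^{-\gamma}}_{R^h}$ directly from $\{f^{R^{-\gamma}}_{R^h}\notin\cross^\ast_\ell\}$ via a deterministic topological dichotomy and sprinkle that crossing. This is exactly the claim the paper says it lacks a proof of for $f^{R^{-\gamma}}_{R^h}$ (see the discussion immediately preceding Theorem~\ref{t.phase_trans}: ``since we lack a proof of this (the field $f_r^\eps$ is degenerate and is not stationary, so standard arguments do not apply), we must take care to distinguish these events''). If the deterministic rectangle dichotomy you cite applies verbatim to the paper's crossing events (which are phrased via connected components of $\mathcal{E}_\ell$ in the plane, not within $D$), your route is a genuine simplification and renders the authors' caveat unnecessary; if not, this is the one step where the paper's more conservative route is safer. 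Either way you should make the intermediate step explicit rather than rely on the informal citation, because this is precisely the point the authors flag. Your Part~(i) argument via $\{g\in\cross_\ell\}\subseteq\{g\notin\cross^\ast_\ell\}$ is morally the same as the paper's direct sprinkling of the decreasing events $\cross^\ast_\ell$ and $\arm^\ast_\ell$; both are fine. Finally, you correctly notice that the quantitative step absorbs a $\log R$ factor, which fails at the endpoint $\theta=\min\{\gamma,(\beta-1)h\}$; the paper's own proof tacitly uses $\theta<\min\{\gamma,(\beta-1)h\}$ even though the proposition is stated with a closed interval, so this is a genuine (though harmless, since the application in Theorem~\ref{t.phase_trans} uses a strict inequality) oversight in the paper that you have identified.
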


\begin{proof}
The proof is based on the simple fact that, if $A$ is an increasing event that depends only on $B_R$, and if $f$ and $g$ are random fields satisfying
\[ \Pro[ f \in A ] > c \quad \text{and} \quad \Pro [ \|f - g \|_{\infty,B_R} > \ell ] < \delta \]
for some constants $c, \ell, \delta > 0$, then $\Pro[g+ \ell \in A] > c - \delta$, and similarly for $A$ decreasing.

\medskip
Now let $h, \gamma > 0$ be given. By Proposition~\ref{p.coupling} there exist $c_3, c_4 > 0$ such that, for each $R \ge 1$,
\[ \sup_{ \bar r \in [R^h, \infty] }  \Pro \left[ \| f_{\bar r} - f^{R^{-\gamma}}_{R^h} \|_{\infty,B_R} > c_3 (\log R) ( R^{-(\beta-1)h} + R^{-\gamma} )  \right] < c_3  e^{-c_4(\log R)^2}  . \]
Since $\theta < \min\{ \gamma, (\beta-1)h \}$, this implies that, for all $\ell \ge R^{-\theta}$, 
\[ \sup_{ \bar r \in [R^h, \infty] } \Pro \left[ \| f_{\bar r}  - f^{R^{-\gamma}}_{R^h} \|_{\infty,B_R} > \ell  \right] < c_5  e^{-c_6(\log R)^2}  \]
for constants $c_5, c_6 > 0$. Since the right-hand side of the above tends to zero as $R \to \infty$ faster than any polynomial, and since $\cross^\ast_\ell(\rho_1,\rho_2)$ and $\arm^*_{\ell}(x; \rho_1,\rho_2)$ are decreasing events, combining with the RSW estimates in Theorem~\ref{t.rsw} gives the first two results as well as the third result with $ \{f_{\bar r} \in \cross_{2 \ell}(2R,R) \}$ replaced by $ \{f_{\bar r} \notin \cross^\ast_{2 \ell}(2R,R) \}$. In light of the third statement of Proposition \ref{p.areg} we are done.
\end{proof}

\subsection{Connecting Russo's formula to the OSSS influences}
\label{s.connect}
The next step is to give a Russo-type formula that is applicable in our setting, and then to show that the `influences' that appear in this formula are comparable to the influences that appear in the OSSS inequality (see Section \ref{s.overview} where we introduce this inequality). 

\medskip
We begin by considering the case of a standard $n$-dimensional Gaussian vector $X$, for which we have the following Russo-type formula: For every Borel set $A \subset \R^n$ and $\ell \in \mathbb{R}$,
\begin{equation} \label{e.russo_for_iid_Gaussian}
\frac{d}{d\ell} \Pro \left[ (X_1+\ell,\cdots,X_n+\ell) \in A \right] = \sum_{i=1}^n \mathbb{E}[X_i \id_{  (X_1+\ell,\cdots,X_n+\ell) \in A } ] .
\end{equation}
 By analogy with the Boolean Russo formula \eqref{e.rus1}, the summands $\E \left[ X_i \un_A \right]$ can be considered as the `influence' of each coordinate $i$ on the event $A$; in the case that $A$ is increasing, these are always positive. 
 
 \medskip
 We next connect the above notion of influence to the `resampling' influences $I_i$ that appear in the OSSS inequality, defined in Section \ref{s.osss}. For each increasing Borel set $A \subseteq \R^n$ and $i \in \{ 1, \cdots, n \}$, let $Y_i^A \in [-\infty,\infty]$ be the random variable, depending on all coordinates except the $i^{\rm{th}}$, such that, for every $x < Y_i^A$, $(X_1,\cdots,X_{i-1},x,X_i,\cdots,X_n) \notin A$ and, for every $x > Y_i^A$, $(X_1,\cdots,X_{i-1},x,X_i,\cdots,X_n) \in A$; in other words, $Y_i^A$ is the \textit{threshold} for the event $A$ with respect to the $i^{\rm{th}}$ coordinate. Then, by symmetry, 
\[ \E \left[ X_i \un_{X \in A} \right] = \E \left[ X_i \un_{X_i \geq Y_i^A} \right] = \E \left[ X_i \un_{X_i \geq |Y_i^A|} \right] . \]
Now, let $\widetilde{X}=X$ except that the $i^{th}$ coordinate is resampled independently. Then we can define the influence of each $i \in \{ 1, \cdots, n \}$ on $A$ as in Section~\ref{s.OSSSsub}, i.e.,
\[
I_i(A):=I_i^{\mathcal{N}(0,1)}(A)=\Pro \left[ \un_{A}(X) \neq \un_A(\widetilde{X}) \right] ,
\]
(we shall use the abbreviation $I_i:=I_i^{\mathcal{N}(0,1)}$ throughout Section~\ref{s.osss}). Note that we have
\begin{align}
\label{e.con}
  I_i(A)  = 2 \Pro \left[ X_i \leq Y_i^A \leq \widetilde{X}_i \right] & \leq 2 \Pro \left[ X_i \geq |Y_i^A| \right] \\
 \nonumber  &\leq c_\text{Rus} \, \E \left[ X_i \un_{X_i \geq |Y_i^A|} \right] = c_\text{Rus} \, \E \left[ X_i \un_{X \in A} \right] , 
\end{align}
where $c_\text{Rus} > 0$ denotes the absolute constant
\begin{equation}
\label{e.crus}
c_\text{Rus} =\sup_{a \geq 0} \Pro \left[ Z \geq a \right] / \E \left[ Z \un_{Z \geq a} \right] < \infty , 
\end{equation}
 for $Z$ a standard normal random variable. In other words, the `influences' in Russo's formula are comparable to the resampling influences, just as they are in the Boolean setting of percolation (as discussed in Section \ref{s.overview}). Note that the fact that $A$ was increasing was crucial in attaining~\eqref{e.con}.

\medskip
We now return to the setting of smooth Gaussian fields. Fix $\eps > 0$ and consider the discretised field $f^\eps = q \star W^\eps$, where for the purposes of this discussion we assume only that Assumption~\ref{a.dec} holds for $\beta > 1$ rather than $\beta > 2$ (we also do not need Assumption \ref{a.sym} here). Let $r \geq 1$, let~$D$ be a bounded Borel subset of $\R^2$ and let $\mathcal{D} = \{ x \in \Z^2 \, : \, \dist(x,D) \leq r \}$, where $\dist$ is the Euclidean distance. Also, let $A \in \mathcal{F}_D$ (see Definition~\ref{d.sigma_al} for the notations of $\sigma$-algebras) and, for each $\ell \in \R$, let $\widetilde{A}_\ell$ be the Borel subset of $\R^{\calD}$ such that
\[ \{ f_r^\eps + \ell \in A \} = \{ (\eta_v)_{v \in \calD } \in \widetilde{A}_\ell \}  .  \]
Observe that, if $A$ is increasing and $q \ge 0$, $\widetilde{A}_\ell$ is increasing with respect to $(\eta_v)_{v \in \calD}$. This observation allows us to deduce a Russo-type formula in terms of the `resampling' influences that appear in the OSSS inequality:

\begin{proposition}[Russo's formula in terms of the OSSS influences]
\label{p.russo}
Fix $\eps > 0$, let $D$ be a bounded Borel subset of $\R^2$ and let $A \in \mathcal{F}_D$ be increasing. Define $\mathcal{D}$ and $\widetilde{A}_\ell$ as above. Then, for each $\ell \in \mathbb{R}$,
\begin{equation}
\label{e.rus2}
\frac{d}{d\ell} \Pro \left[ f^\eps_r + \ell \in A \right] =  \frac{  \eps}{ \| q_r \|_{L^1} }  \sum_{v \in \calD} \E[ \eta_v  \id_{ f_r^\eps + \ell \in A} ]  \ge \frac{c_\text{Rus}  \, \eps}{\|q\|_{L^1}} \sum_{v \in \calD} I_v( \widetilde{A}_\ell) ,
\end{equation}
where $c_\text{Rus} > 0$ is the absolute constant defined in \eqref{e.crus}.
\end{proposition}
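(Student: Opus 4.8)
The plan is to prove the two claims in \eqref{e.rus2} — the differential identity and the inequality — in turn. For the identity, I would first reduce to a finite-dimensional statement. Since $A \in \mathcal{F}_D$ and $f^\eps = q \star W^\eps$, the event $\{f^\eps + \ell \in A\}$ depends only on the restriction of $f^\eps$ to $D$, which is a (deterministic, $\ell$-dependent) function of the countably many Gaussian variables $\eta_v$, $v \in \eps\Z^2$; moreover the decay hypothesis \eqref{e.beta} with $\beta > 1$ ensures that for $x \in D$ the coefficients $\eps^{-1}\int_{v+[-\eps/2,\eps/2]^2} q(x-u)\,du$ are summable in $v$, so only the variables $\eta_v$ with $v \in \calD$ matter up to a tail that can be controlled. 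Concretely, writing $f^\eps(x) = \eps^{-1}\sum_v \eta_v \int_{v+[-\eps/2,\eps/2]^2} q(x-u)\,du$ one sees that adding $\ell$ to $f^\eps$ has the same effect as translating the relevant Gaussian vector: since $\int_{\R^2} q = \|q\|_{L^1}$ (here using $q \ge 0$), we have $\eps^{-1}\sum_{v} \int_{v+[-\eps/2,\eps/2]^2} q(x-u)\,du = \eps^{-1}\|q\|_{L^1} \cdot \eps \cdot \eps^{-1} = \eps^{-1}\|q\|_{L^1}$... more carefully: $f^\eps(x) + \ell$ is obtained from $f^\eps(x)$ by replacing $\eta_v$ with $\eta_v + \ell\eps/\|q\|_{L^1}$ for every $v$ simultaneously, because then the added contribution is $(\ell\eps/\|q\|_{L^1})\cdot\eps^{-1}\sum_v \int_{v+[-\eps/2,\eps/2]^2} q(x-u)\,du = (\ell/\|q\|_{L^1})\cdot\int_{\R^2} q(x-u)\,du = \ell$. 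So $\{f^\eps + \ell \in A\} = \{(\eta_v + \ell\eps/\|q\|_{L^1})_v \in \widetilde A_0\}$ where we identify $\widetilde A_0$ as a subset of the appropriate sequence space.

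With this reduction, I would apply the one-dimensional Gaussian integration-by-parts formula \eqref{e.russo_for_iid_Gaussian} (the Cameron--Martin / Gaussian Russo formula for i.i.d.\ standard Gaussians translated by a common shift), composed with the chain rule: if $s = \ell\eps/\|q\|_{L^1}$ then
\[
\frac{d}{d\ell}\Pro[f^\eps + \ell \in A] = \frac{\eps}{\|q\|_{L^1}}\frac{d}{ds}\Pro\big[(\eta_v + s)_v \in \widetilde A_0\big] = \frac{\eps}{\|q\|_{L^1}}\sum_v \E\big[\eta_v \id_{f^\eps + \ell \in A}\big].
\]
Strictly one should first truncate to finitely many coordinates $v \in (\eps\Z^2)\cap[-n,n]^2$, apply \eqref{e.russo_for_iid_Gaussian} there, and pass to the limit $n \to \infty$ using dominated convergence together with the $L^2$ bound $\sum_v \E[\eta_v \id_{f^\eps+\ell\in A}]^2 \le 1$ (Parseval, as in Remark~\ref{r.bernou}) to justify interchanging $d/d\ell$ and the limit. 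One must also check the sum can be restricted to $v \in \calD$: for $v \notin \calD$, $\dist(v,D) > r$, but this does \emph{not} make $\E[\eta_v\id_{f^\eps+\ell\in A}]$ vanish in general since $q$ has full support — here I would use that $f^\eps_r = q_r \star W^\eps$ with $q_r$ supported in $B_r$, so $\{f^\eps_r + \ell \in A\}$ genuinely depends only on $(\eta_v)_{v\in\calD}$, and note the statement as phrased uses $f^\eps$ on the left but $\widetilde A_\ell$ defined via $f^\eps_r$; I expect the intended reading (and the one I would make precise) is that \eqref{e.rus2} is stated for $f^\eps_r$ throughout, matching the application in Theorem~\ref{t.phase_trans}. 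Either way, the sum over $\calD$ of $\E[\eta_v\id]$ is what arises.

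For the inequality in \eqref{e.rus2}, this is now immediate from the finite-dimensional discussion preceding the proposition: since $A$ is increasing and $q \ge 0$, the set $\widetilde A_\ell \subset \R^\calD$ is increasing in $(\eta_v)_{v\in\calD}$, so \eqref{e.con} applies coordinatewise, giving $\E[\eta_v \id_{f^\eps_r + \ell \in A}] = \E[\eta_v \id_{(\eta_w)_w \in \widetilde A_\ell}] \ge c_\text{Rus}^{-1} I_v(\widetilde A_\ell)$, and summing over $v \in \calD$ yields the claimed bound. The main obstacle is the justification of the differentiation under the integral and the interchange of derivative and infinite sum/limit, i.e.\ making rigorous that the finite-dimensional Gaussian Russo formula passes to the (countable) limit; this is routine but needs the $\ell^2$-summability of the influences (Parseval) plus an equicontinuity-in-$\ell$ argument, together with the mild regularity of $\widetilde A_\ell$ (which follows from $f^\eps_r$ being non-degenerate on $D$, Proposition~\ref{p.areg}, so that the boundary of $\widetilde A_\ell$ is null and the threshold variables $Y_v^{\widetilde A_\ell}$ are a.s.\ finite). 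Everything else is bookkeeping with the constant $\eps/\|q\|_{L^1}$ coming from the chain rule.
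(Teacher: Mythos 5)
Your proof is correct and follows essentially the same route as the paper's: rewrite the level shift $\ell$ as a simultaneous shift of each $\eta_v$ by $\ell\eps/\|q\|_{L^1}$ (using $\int q = \|q\|_{L^1}$, valid since $q\ge 0$), apply \eqref{e.russo_for_iid_Gaussian} together with the chain rule, and then conclude the inequality from \eqref{e.con} and the fact that $A$ increasing plus $q\ge 0$ makes $\widetilde A_\ell$ increasing. Your observation that the statement should really be read with $f^\eps_r$ in place of $f^\eps$ — so that the support of $q_r$ is contained in $B_r$, $\calD$ is finite, and the sum over $v\in\calD$ is exact — is accurate and matches how the proposition is actually invoked in Claim~\ref{c.russo}.
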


\begin{proof}
First recall that
\[ f_r^\eps(x) = \frac{1}{\eps} \sum_{v \in \mathcal{D}} \eta_v \int_{u \in v+[-\eps/2,\eps/2]^2} q_r(x-u)  \, du  ,\]
and that $q_r \in L^1$. Hence, for every $x \in \R^2$,
\begin{eqnarray*}
f_r^\eps(x) + \ell & = & \frac{1}{\eps} \sum_{v \in \mathcal{D}} \left( \eta_v+\ell \frac{\eps}{ \int_{\R^2} q_r(x-u) \, du}  \right) \int_{u \in v+[-\eps/2,\eps/2]^2} q_r(x-u) \, du\\
& = & \frac{1}{\eps} \sum_{v \in \mathcal{D}} \left( \eta_v+\ell \frac{\eps}{ \| q_r \|_{L^1} }  \right) \int_{u \in v+[-\eps/2,\eps/2]^2} q_r(x-u) \, du  .
\end{eqnarray*}
As a result, we have
\begin{eqnarray*}
\{  f_r^\eps + \ell \in A \} & = & \left\lbrace \left(\eta_v+\ell \frac{\eps}{\| q_r \|_{L^1} } \right)_{v \in \calD} \in \widetilde{A}_0  \right\rbrace  ,
\end{eqnarray*}
and so, by \eqref{e.russo_for_iid_Gaussian}, we have the equality in~\eqref{e.rus2}. The inequality then follows from \eqref{e.con} (and the bound $\|q_r\|_{L^1}  \le \|q\|_{L^1}  $). Indeed, the fact that $A$ is increasing and that $q \geq 0$ imply that $\widetilde{A}_\ell$ is increasing for every $\ell$. Note that this is the only part of the paper where the strong positivity condition $q \ge 0$ is required.
\end{proof}

\subsection{Applying the OSSS inequality}

We are now in a position to apply the OSSS inequality, and complete the proof of Theorem~\ref{t.phase_trans}. 

\begin{proof}[Proof of Theorem~\ref{t.phase_trans}]

Let $d > 0$ denote the constant appearing in Proposition \ref{p.sprink}; we can and will assume that $d<1$. Fix $\gamma, h \in (0,1)$ such that  $\gamma < d(1-h)$, and fix a $\theta > 0$ such that
\[   \theta < \min\{ \gamma, (\beta - 1) h ,  d(1-h) - \gamma \} , \]
which satisfies in particular the restriction on $\theta$ in the statement of Proposition~\ref{p.sprink}. For the remainder of the proof we abbreviate $\eps=\eps(R) = R^{-\gamma}$ and $r = r(R) = R^{h}$.

\medskip
In the sequel we let $\Omega(1)$ and $\grandO{1}$ denote constants, that may depend on the parameters, but with the properties that (i) $\Omega(1)$ is bounded away from zero in $R$, and $\grandO{1}$ is bounded above in $R$, and (ii) $\Omega(1)$ and $\grandO{1}$ are independent of $\ell$.

\medskip
Define the (finite) set of vertices 
\[ {\calD}_R =\eps \Z^2 \cap [-r,2R+r] \times [-r,R+r]
.\] 
For every $\ell \in \R$ and $R \ge 1$, let $\widetilde{\cross}_\ell(2R,R)$ be the increasing Borel subset of $\R^{\calD_R}$ such that
\[
\{ f_r^\eps \notin \cross^\ast_{\ell} (2R, R) \}  = \{ (\eta_v)_{v \in \calD_R} \in \widetilde{\cross}_\ell(2R,R) \} .
\] 
Our strategy will be to apply OSSS to the event $\{f_r^\eps \notin \cross^\ast_\ell(2R,R)\}$; this part of the argument is valid for all $R \ge 1$ and $\ell \in \mathbb{R}$. By the discussion in Section~\ref{s.infl}, we have the following differential formula in terms of the resampling influences $I_v$:

\begin{claim}
\label{c.russo}
For every $\ell \in \R$,
\begin{equation}
\label{e.rus3}
\frac{d}{d\ell} \Pro \left[ f_r^\eps \notin  \cross^\ast_{\ell}(2R,R) \right]   \geq  \Omega(1) \, \eps \sum_{v \in \calD_R} I_v(\widetilde{\cross}_\ell(2R,R)) .
\end{equation}
\end{claim}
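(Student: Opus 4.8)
The plan is to derive \eqref{e.rus3} as a direct consequence of Proposition~\ref{p.russo}, once the event $\{f_r^\eps \notin \cross^\ast_\ell(2R,R)\}$ has been rewritten as a fixed increasing event translated by the level. First I would use the elementary level-shift identity $\mathcal{E}^c_\ell(g) = \mathcal{E}^c_0(g+\ell)$ to write
\[
\{ f_r^\eps \notin \cross^\ast_\ell(2R,R) \} = \{ f_r^\eps + \ell \in A \} , \qquad A := \{ g \in C(\R^2) : g \notin \cross^\ast_0(2R,R) \} ,
\]
and then check that $A$ is an increasing element of $\mathcal{F}_D$, where $D := [0,2R] \times [0,R]$ is the domain of the relevant quad. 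Indeed $\cross^\ast_0(2R,R)$ depends only on $g|_D$, and it is decreasing in $g$ (enlarging $g$ pointwise shrinks $\mathcal{E}^c_0 = \{ g < 0 \}$, so it can only destroy top--bottom crossings of $D$ by $\mathcal{E}^c_0$); hence its complement $A$ is increasing.

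Next I would observe that, for $R$ large (which may be assumed, the statement concerning the limit $R \to \infty$), the index set $\calD_R = \eps\Z^2 \cap [-r,2R+r]\times[-r,R+r]$ contains every coordinate on which $f_r^\eps|_D$ depends: since $q_r = q\chi_r$ is supported in $B_{r/2}$, the value $f_r^\eps(x)$ for $x \in D$ involves only the variables $\eta_v$ with $v$ within Euclidean distance $r$ of $D$. Hence Proposition~\ref{p.russo} applies to the increasing event $A$ with its index set taken to be $\calD_R$, and the set it produces is exactly $\widetilde{\cross}_\ell(2R,R)$; the latter is increasing in $(\eta_v)_{v\in\calD_R}$ because $A$ is increasing and $q \ge 0$ under the strong positivity assumption (the sole place this hypothesis enters, as already flagged in the proof of Proposition~\ref{p.russo}).

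Proposition~\ref{p.russo} then yields, for every $\ell \in \R$,
\[
\frac{d}{d\ell} \Pro\!\left[ f_r^\eps \notin \cross^\ast_\ell(2R,R) \right] \;=\; \frac{d}{d\ell} \Pro\!\left[ f_r^\eps + \ell \in A \right] \;\ge\; \frac{c_\text{Rus}\, \eps}{\|q_r\|_{L^1}} \sum_{v \in \calD_R} I_v\!\left( \widetilde{\cross}_\ell(2R,R) \right) ,
\]
and it remains only to observe that the prefactor $c_\text{Rus}/\|q_r\|_{L^1}$ is an $\Omega(1)$ quantity, independent of $\ell$: since $q \ge 0$ with $\|q\|_{L^1} = \int q \in (0,\infty)$ and $\|q_r\|_{L^1} = \int q\chi_r$ increases to $\|q\|_{L^1}$ as $r \to \infty$, for $R$ large $\|q_r\|_{L^1}$ is bounded above and away from zero. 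This is precisely \eqref{e.rus3}.

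I do not anticipate a genuine obstacle, since the substantive content — Russo's formula \eqref{e.russo_for_iid_Gaussian} and the comparison \eqref{e.con} between the two notions of influence through the constant $c_\text{Rus}$ — has already been established in Section~\ref{s.connect}. The only points calling for care are the verification that $\cross^\ast_0(2R,R)$ is a decreasing, $\mathcal{F}_D$-measurable function of the field (this is the reason one works with the dual crossing event $\cross^\ast$ rather than with $\cross$ directly, as discussed before Section~\ref{s.infl}), and the bookkeeping ensuring that $\calD_R$ captures all coordinates on which $f_r^\eps|_D$ depends, so that no influence is dropped from the sum in \eqref{e.rus3}.
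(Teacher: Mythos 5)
Your proposal is correct and follows essentially the same route as the paper: rewrite the event as a level-shifted increasing event and apply Proposition~\ref{p.russo}, using $q_r \ge 0$ and $\|q_r\|_{L^1} \le \|q\|_{L^1} < \infty$ to control the prefactor. The only difference is one of precision: the paper's (one-line) proof says it applies Proposition~\ref{p.russo} to ``$A = \cross_0(2R,R)$,'' which is a slight shorthand since the event in the claim is $\{f_r^\eps \notin \cross^\ast_\ell(2R,R)\}$ and the a.s.\ identification $\{f_r^\eps \in \cross_\ell\} = \{f_r^\eps \notin \cross^\ast_\ell\}$ is not established for the degenerate field $f_r^\eps$; your choice $A = \{g : g \notin \cross^\ast_0(2R,R)\}$ is the accurate one and is still increasing and $\mathcal{F}_D$-measurable, so the argument goes through as you describe.
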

\begin{proof}
Since $q_r \ge 0$ and $\| q_r \|_{L^1}  \le \|q \|_{L^1}   < \infty$ (since Assumption~\ref{a.dec} holds for $\beta > 2$), this is an application of Proposition \ref{p.russo} to the field $f_r^\eps$ and the event $A = \cross_{0}(2R,R)$.
\end{proof}

Let us now apply the OSSS inequality to the right-hand side of \eqref{e.rus3}. For this we define an algorithm $\mathcal{A}$ that reveals sequentially the values of $(\eta_v)_{v \in \calD_R}$ such that (i) $\mathcal{A}$ determines the event $\{f_r^\eps \notin \cross^\ast_\ell(2R,R)\}$, and (ii) the revealment $\delta_v(\mathcal{A})$ for every vertex $v \in \calD_R $ is small. Our algorithm is adapted from \cite{ahlberg2017noise} (using ideas that take their root in~\cite{benjamini1999noise} and~\cite{schramm2010quantitative}); the basic idea is to explore each of the connected components of $ \mathcal{E}_\ell^c$ that intersects a (randomly chosen) horizontal line-segment $L$ across the rectangle $[0, 2R] \times [0, R]$, and determine whether any of these components join the top and bottom sides of the rectangle, and hence validate the event $\{f_r^\eps \in \cross^\ast_\ell(2R,R)\}$ (see Figure \ref{f.algo} for an illustration).

\medskip
Abbreviate $D = [0, 2R] \times [0, R]$. We call a point $x \in D$ \textit{safe} if every vertex $v \in \calD_R$ within a distance $r$ of $x$ has been revealed (of course, this set will change during the running of the algorithm, but it is always non-decreasing), and let $\mathcal{S} \subset D$ denote the set of safe points. We call a path in $D$ a \textit{safe blocking path} if it lies in $\mathcal{S} \cap \mathcal{E}_\ell^c$; note that since $f^\eps_r$ is an $r$-dependent field, the value of the field $f^\eps_r$ is known precisely on the set of safe points $\mathcal{S}$, so the existence of a safe blocking path is measurable with respect to the revealed $\eta_v$.

\medskip
The algorithm $\mathcal{A}$ is defined as follows:
\medskip
\paragraph{\textbf{Algorithm $\calA$:}} 
\begin{enumerate}
\item Initialise a random seed $k \in \{0, 1, \ldots , \lfloor R/r \rfloor \}$, selected uniformly at random.
\item Reveal (in arbitrary order) the value of $\eta_v$ for every $v \in \calD_R$ at distance at most $2r$ from the horizontal line segment $L := [0, 2R] \times \{kr\}$.
\item Iterate the following steps:
\begin{enumerate}
\item If there is a safe blocking path between the `bottom' side $\nu := [0, 2R] \times \{0\}$ and the `top' side $\nu' := [0, 2R] \times \{R\}$, terminate with output~$0$.
\item Identify the subset $\mathcal{U} \subset \partial \mathcal{S}$ such that there is a safe blocking path between $L$ and $\partial \mathcal{S}  \setminus (\nu \cup \nu') $. If the subset $\mathcal{U}$ is empty, terminate with output $1$.
\item Reveal (in arbitrary order) the value of $\eta_v$ for every $v \in \calD_R$ at distance at most $2r$ from each point in $\mathcal{U}$ that has not yet been revealed.
\end{enumerate}
\end{enumerate}

\begin{figure}[h!]
\centering
\includegraphics[scale=0.7]{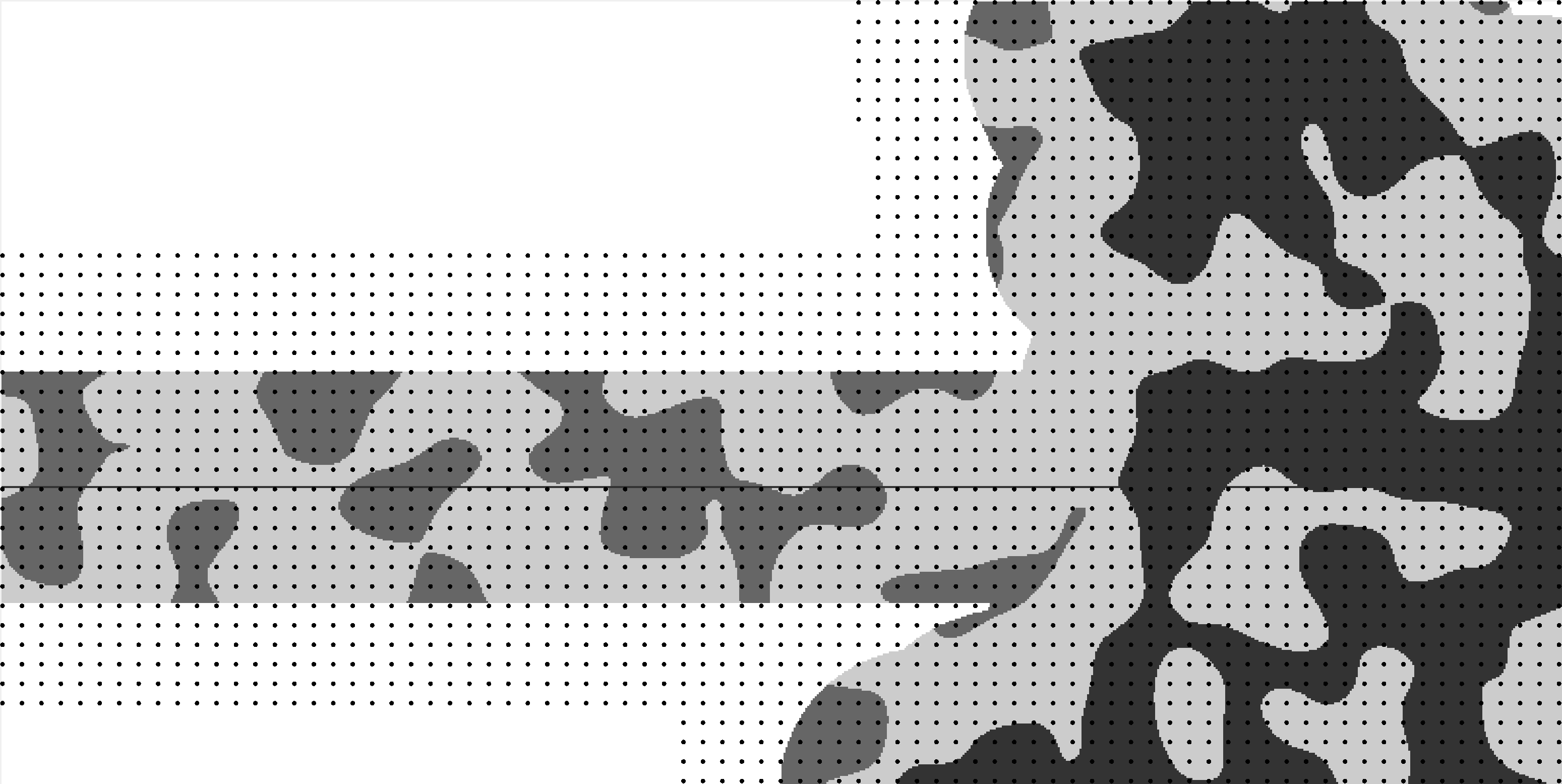}
\caption{An illustration of a run of algorithm $\mathcal{A}$ showing (i) the horizontal line~$L$, (ii) the white-noise coordinates $\eta_v$ that were revealed by the run, and (iii) the `safe' set $\mathcal{S}$ at the end of the run, which consists of (a) the `safe' subset of $\mathcal{E}_\ell$ (in light grey), and (b) the `safe' subset of $\mathcal{E}_\ell^c$ (in black and dark grey). For this run $\mathcal{A}$ terminated with output $0$, indicating that $\{f_r^\eps \in \cross^\ast_\ell(2R,R)\}$ occurred, since there is `safe blocking path' (in black) between $\nu$ and~$\nu'$. Credit: Dmitry Beliaev}\label{f.algo}
\end{figure}

Observe that algorithm $\mathcal{A}$ determines the event $\{f_r^\eps \notin \cross^\ast_\ell(2R,R)\}$ (i.e.\ it terminates with output value $\id_{f_r^\eps \notin \cross^\ast_\ell(2R,R) }$), since either (i) $\mathcal{A}$ reveals a safe blocking path in $\mathcal{E}_\ell^c \cap D$ which connects the `bottom' side $\nu$ and the `top' side $ \nu'$, in which case the output value is $0$ and $\{f_r^\eps \in \cross^\ast_\ell(2R,R)\}$ occurs, or else (ii) $\mathcal{A}$ terminates with output value $1$ and reveals there to be no path in $\mathcal{E}_\ell^c \cap D$ which connects $\nu$ and $\nu'$; in this case we deduce that $\{f_r^\eps \notin \cross^\ast_\ell(2R,R)\}$ occurs.

\medskip
Next observe that the revealments under this algorithm are bounded above by
\begin{equation}
\label{e.reveal}
\max_{v \in \calD_R} \delta_v(\mathcal{A}) \le    \grandO{1}  \,  \lfloor R/r  \rfloor^{-1}  \bigg( 1 + \sum_{k=1}^{\lfloor R/r \rfloor  }   \Pro \left[ f^\eps_r \in \arm_\ell^*(v; 2r,k r) \right]  \bigg) , 
\end{equation}
since a vertex $v$ is revealed if and only if either (i) $\text{dist}(v, L) \le 2r$, or (ii) there is a connected component of $\mathcal{E}_\ell^c$ that intersects both $L$ and $B_v(2r)$, which implies the existence of the one-arm event  $\arm_{\ell,r}^{*,\eps}(v; 2r, \text{dist}(v, L)) $ (here $\text{dist}(v, L)$ is the vertical distance between $v$ and the line $L$).

\medskip
Combining Claim \ref{c.russo}, the OSSS inequality in Theorem \ref{t.osss} and the bound on the revealments in \eqref{e.reveal}, we obtain 
\begin{equation}
\label{e.derbound}
 \frac{d}{d\ell} \Pro \left[ f_r^\eps \notin \cross_\ell^\ast(2R,R) \right]  \ge  \frac{  \Omega(1) \,  \eps \var \left( f^\eps_r \notin \cross^{\ast}_{\ell}(2R,R) \right)  }{     \lfloor R/r  \rfloor^{-1}  \left(1 +   \sum_{k=1}^{\lfloor R/r \rfloor  }   \Pro \left[ f^\eps_r \in \arm_\ell^*(v; 2r,k r) \right]    \right)} .  
 \end{equation}

\medskip
At this point we restrict the analysis to $R$ sufficiently large and levels $\ell \ge R^{-\theta}$. With the bound on arm events given in Proposition~\ref{p.sprink}, there exists $R_0 > 0$ such that, if $R \geq R_0$ and $\ell \ge R^{-\theta}$,
\begin{align}
\label{e.derbound2}
 \lfloor R/r  \rfloor^{-1} \bigg(1 +   \sum_{k=1}^{\lfloor R/r \rfloor  }   \Pro \left[ f^\eps_r \in \arm_\ell^*(v; 2r,k r) \right] \bigg)  & \le    \grandO{1}  \,  \lfloor R/r  \rfloor^{-1} \bigg( 1 + \sum_{k=1}^{\lfloor R/r \rfloor }   k^{-d} \bigg)  \\
 \nonumber & \le  \grandO{1} \, \lfloor R/ r \rfloor^{-d }  \le \grandO{1} \, R^{-d(1-h) } \, .
 \end{align}
Since $\eps=R^{-\gamma}$, equations \eqref{e.derbound} and \eqref{e.derbound2} imply that
\[ \frac{d}{d\ell} \Pro \left[ f_r^\eps \notin \cross^\ast_\ell(2R,R) \right]  \ge  \Omega(1) \,  R^{d(1-h)-\gamma}  \var \left( f_r^\eps \notin \cross^\ast_\ell(2R,R)\right) \,  .  \]
Now, let $\delta > 0$ and let us show that, if $R$ is sufficiently large, then
\begin{equation}\label{e.last_thing_to_prove}
\Pro \left[ f_r^\eps \notin \cross^\ast_\ell(2R,R)  \right] \geq 1-\delta .
\end{equation}
Note that combined with the final statement of Proposition~\ref{p.sprink}, this is enough to conclude the proof of Theorem~\ref{t.phase_trans} (with $4R^{-\theta}$ instead of $R^{-\theta}$ but then one can replace $\theta$ by $\theta + a$ for any $a>0$). 

\medskip
We prove~\eqref{e.last_thing_to_prove} as follows. First we note that, thanks to the first statement of Proposition \ref{p.sprink}, if $R$ is sufficiently large and $\ell \ge R^{-\theta}$ then
\[ \var \left( f_r^\eps \notin \cross^\ast_\ell(2R,R)  \right) \geq \Omega(1) \,  \left(1-\Pro \left[ f_r^\eps \notin \cross^\ast_\ell(2R,R)\right] \right)  . \]
Now, assume that $R$ is such that $\Pro \left[ f_r^\eps \notin \cross^\ast_\ell(2R,R) \right] < 1-\delta$. It is then sufficient to prove that this implies that $R$ cannot be too large. For this purpose, we note that this implies that
\[ \var \left( f_r^\eps \notin \cross^\ast_\ell(2R,R)\right) > \Omega(1) \delta  . \]
Hence
\[ \frac{d}{d\ell} \Pro \left[f_r^\eps \notin \cross^\ast_\ell(2R,R) \right] > \Omega(1) \delta R^{d(1-h)-\gamma}  . \]
Integrating from $R^{-\theta}$ to $2R^{-\theta}$, we obtain that
\[\Pro \left[ f_r^\eps \notin \cross^\ast_\ell(2R,R) \right] > \Omega(1) \delta R^{d(1-h)-\gamma-\theta}  .\]
Since $\Pro \left[ f_r^\eps \notin \cross^\ast_\ell(2R,R) \right] \leq 1$ and since $d(1-h)-\gamma-\theta>0$, this implies that $R$ cannot be too large and we have proved~\eqref{e.last_thing_to_prove}.

\end{proof}

\begin{remark}
\label{r.3d}
In~\cite{duminil2017sharp,duminil2017exponential,duminil2018poisson} the OSSS inequality was applied to one-arm events rather than to crossing events as we do here; for many discrete models, this yields a differential inequality that implies the sharpness of phase transition in any dimension. While it is possible such an approach would also work in our setting, it seems likely that it would require new ideas to implement. The main difficulty comes from the fact that, if we want the algorithm $\mathcal{A}$ to determine the value of $f^\eps_r(x)$ for some $x$, then $\mathcal{A}$ must reveal \textit{all} of the white-noise coordinates in the ball $B_r(x)$ of \textit{growing} radius $r \gg 1$, which results in a differential inequality that is not strong enough to deduce the phase transition.
\end{remark}

\bigskip
\section{Proof of the main results}
\label{s.final}

The remainder of our results can all be deduced from Theorem~\ref{t.phase_trans} via classical gluing and bootstrapping techniques. Since some of the arguments in this section are rather standard, we skip many of the details and instead refer to relevant literature. As in Section \ref{s.osss}, for every $\rho_1,\rho_2 > 0$, we write $\cross_\ell(\rho_1,\rho_2)$ for the crossing event $\cross_\ell(Q)$ in the case that $Q = (D; \gamma, \gamma')$, where $D$ is the rectangle $[0,\rho_1] \times [0,\rho_2]$ and $\gamma$ and $\gamma'$ are respectively the left and right sides $\{0\} \times [0, \rho_2]$ and $\{\rho_1\} \times [0, \rho_2]$.

\medskip
The proof of the second statement of Theorem~\ref{t.main3} is straightforward:

\begin{proof}[Proof of the second statement of Theorem~\ref{t.main3}]
For the events $\cross_\ell(2R,R)$, the required statement is a direct consequence of Theorem~\ref{t.phase_trans}. Standard gluing arguments (see \cite[Section 4.2]{bg_16} for details) allow the conclusion to be extended to every quad.
\end{proof}

The main technical novelty in this section is to deduce from Theorem \ref{t.phase_trans} a version of the third statement of Theorem \ref{t.main2} for the rectangle $[0, 2] \times [0, 1]$. 

\begin{theorem}
\label{t.quantfor31}
Suppose that Assumptions \ref{a.reg}--\ref{a.sym} hold, that the strong positivity condition in Assumption \ref{a.swp} holds, and that Assumption~\ref{a.dec} holds for a given $\beta > 2$. Then for every $\ell > 0$ there exist $c_1, c_2 > 0$ such that, for all $R \ge 1$,
\[\Pro \left[ f \in \cross_\ell(2R, R)  \right] > 1- c_1 e^{-c_2R} . \]
\end{theorem}

Before proving Theorem \ref{t.quantfor31} we state two auxiliary result. The first result is a kind of `sprinkled' quasi-independence statement that we deduce directly from Proposition \ref{p.coupling}.

\begin{proposition}
\label{p.sqi}
Suppose that Assumption~\ref{a.dec} holds for a given $\beta > 2$, and suppose that there exists a neighbourhood $V$ of $0$ such that $\inf_V|\rho|  > 0$. Then, there exist $c_1, c_2 > 0$ such that, for every $R \ge 1$, every pair of Borel sets $D_1 \subset \mathbb{R}^2$ (resp.\ $D_2$) of diameter at most $5R$ and such that $\text{dist}(D_1, D_2) \ge \sqrt{R}$, and every pair of decreasing events $A \in \mathcal{F}_{D_1}$ and $B \in \mathcal{F}_{D_2}$,
\[  \Pro \left[  f + c_1 R^{1 - \beta/2} \in A \cap B  \right] \le  \Pro \left[ f \in A \right] \Pro \left[ f \in B \right]  + c_1 e^{-c_2 R} . \]
\end{proposition}
\begin{proof}
Set $r = \sqrt{R}$. Similarly to in the proof of Proposition \ref{p.qi}, for any $\ell > 0$, 
\begin{align*}
&  \Pro \left[  f + 2 \ell \in A \cap B  \right] -\Pro \left[ f  \in  A \right] \Pro \left[ f  \in  B \right] \\
& = \Pro \left[  f + 2 \ell \in A \cap B  \right]  -  \Pro \left[  f_r +  \ell \in A \cap B  \right] +  \Pro \left[  f_r +  \ell \in A \right]  \Pro \left[  f_r +  \ell \in B \right] - \Pro \left[  f  \in A  \right]  \Pro \left[  f \in B \right] .
\end{align*}
By using that, for any $a,b,c,d \in [0,1]$, we have $ab-cd \leq (a-c)_+ +(b-d)_+$ and that, for any events $E,F$, we have $(\Pro[E]-\Pro[F])_+ \leq \Pro [E \setminus F]$, we obtain that the above is at most three times the maximum, over all decreasing $C \in \mathcal{F}_{D_1 \cup D_2}$, of 
\begin{equation}
\label{e.psi}
\Pro \left[  f +  \ell \in C   \setminus  f_r \in C   \right] .
 \end{equation}
By Proposition \ref{p.coupling} applied to $t = \sqrt{R}$, there exist $c_1, c_2>0$ such that, outside an event of probability
\[  c_1 e^{-c_2 R} , \]
the fields $f$ and $f_r$ differ, on $D_1 \cup D_2$, by at most 
\[ c_1 \sqrt{R} \, r^{1-\beta} = c_1 R^{1-\beta/2} . \]
Hence with the setting $\ell = c_1 R^{1-\beta/2}$ and by using that $C$ is decreasing,~\eqref{e.psi} is at most $c_1 e^{-c_2 R}$, completing the proof.
\end{proof}

The second auxiliary result bounds the decay of real functions that satisfy a certain functional inequality; we defer its proof until the end of the section.

\begin{lemma}
\label{l.aux1}
Let $(a_R)_{R \ge 0}$ be a positive function such that $a_R \to 0$ and for which there exist $c_1, c_2 , R_0 > 0$ such that, for all $R \ge R_0$,
\[     a_{2R + \sqrt{R}} \le c_1 a_R^2 + e^{-c_2R}  . \]
Then there exist $c_3, c_4 > 0$ and a positive sequence $(m_n)_{n \ge 1}$ such that, for all $n \ge 1$,
\[ 2^n \le m_n \le c_3 2^n \quad \text{and} \quad  a_{m_n} \le e^{-c_4 m_n} . \]
\end{lemma}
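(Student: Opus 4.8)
The plan is to iterate the functional inequality along a geometric sequence. Set $g(R):=2(R+R/\log^2 R)=2R(1+\log^{-2}R)$. Assuming without loss of generality that $c_1\ge 1$, put $\delta:=1/(4c_1)$ and choose a starting point $R_\star$ so large that $R_\star\ge R_0$, that $\sup_{R\ge R_\star}a_R\le\delta$, that $\log^2 R_\star\ge 1$, and that $R_\star\ge 2\log(4c_1)/c_2$; this is possible since $a_R\to 0$. Define $R_{k+1}:=g(R_k)$. First I would record the crude estimate $2^kR_\star\le R_k\le C_0 2^k$ with $C_0:=R_\star\prod_{j\ge 0}(1+\log^{-2}R_j)$, the product converging because $R_j\ge 2^jR_\star$ forces $\sum_j\log^{-2}R_j\le\sum_j(\log(2^jR_\star))^{-2}<\infty$; in particular $R_{k+1}/R_k\in[2,4]$, and the hypothesis applies at every $R_k$ since $R_k\ge R_\star\ge R_0$.

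The core is the inductive claim
\[
a_{R_k}\le\frac{1}{2c_1}\,e^{-\beta 2^k}\qquad(k\ge 0),\qquad \beta:=c_4' R_\star,\quad c_4':=\min\Bigl\{\tfrac{\log 2}{2C_0},\tfrac{c_2}{4}\Bigr\}.
\]
For $k=0$ this holds because $\beta=c_4'R_\star\le(\log 2)/2$ (using $C_0\ge R_\star$), so $e^{-\beta}\ge 2^{-1/2}>1/2$ and hence $a_{R_\star}\le\delta=\tfrac1{4c_1}\le\tfrac1{2c_1}e^{-\beta}$. For the step, feeding the hypothesis into the inequality gives
\[
a_{R_{k+1}}\le c_1a_{R_k}^2+e^{-c_2R_k}\le\frac{1}{4c_1}e^{-\beta 2^{k+1}}+e^{-c_2R_k},
\]
so it suffices that $4c_1e^{-c_2R_k}\le e^{-\beta 2^{k+1}}$, i.e.\ $4c_1\le\exp\{c_2R_k-2\beta 2^k\}$; since $R_k\ge 2^kR_\star$ and $2\beta\le c_2R_\star/2$, the exponent is at least $2^kR_\star(c_2-2c_4')\ge\tfrac12 c_2R_\star\ge\log(4c_1)$ by the choice of $R_\star$.

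From the claim, using $c_1\ge 1$ and $2^k\ge R_k/C_0$ one gets $a_{R_k}\le e^{-\beta 2^k}\le e^{-c_4 R_k}$ with $c_4:=c_4'R_\star/C_0>0$, valid for all $k\ge 0$. To produce $(m_n)$: if $2^n\ge R_\star$, let $k(n)$ be minimal with $R_{k(n)}\ge 2^n$; then $R_{k(n)}\le 4R_{k(n)-1}<4\cdot 2^n$ (the edge case $k(n)=0$ forces $R_\star=2^n$), so $m_n:=R_{k(n)}$ lies in $[2^n,4\cdot 2^n]$ and $a_{m_n}\le e^{-c_4m_n}$; for the finitely many $n$ with $2^n<R_\star$, set $m_n:=R_\star$, which lies in $[2^n,(R_\star/2)2^n]$ and satisfies $a_{m_n}=a_{R_\star}\le e^{-c_4R_\star}$. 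Taking $c_3:=\max\{4,R_\star\}$ completes the proof.

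The step I expect to be the crux is the formulation of the inductive hypothesis. The naive guess $a_{R_k}\le e^{-cR_k}$ is \emph{not} self-improving: squaring $a_{R_k}$ only gains a factor $e^{-cR_k}$, whereas $R_{k+1}=2R_k+2R_k/\log^2 R_k$ exceeds $2R_k$, and the surplus $2R_k/\log^2 R_k$ cannot be absorbed. Phrasing the induction in terms of $2^k$ instead — equivalently, absorbing the convergent product $\prod_j(1+\log^{-2}R_j)$ into the constant $C_0$ once and for all — together with keeping the slack factor $1/(2c_1)$ to beat the prefactor $c_1$, is exactly what makes the recursion close; the remaining work is routine bookkeeping on how large $R_\star$ must be taken.
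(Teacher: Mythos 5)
Your proof is correct and takes essentially the same approach as the paper: both iterate the functional inequality along a geometrically growing sequence to obtain a doubly-exponential bound $a_{R_k}\lesssim e^{-\beta 2^k}$, then use $R_k\asymp 2^k$ to convert to the stated exponential decay. The paper streamlines the recursion by first passing to the auxiliary quantity $a'_R:=c_1a_R+e^{-(c_2/4)R}$, which satisfies the clean squaring inequality $a'_{g(R)}\le (a'_R)^2$ and can be iterated without further tracking of the additive error, whereas you absorb that error directly inside the induction with the slack factor $1/(2c_1)$; these are interchangeable bookkeeping devices.
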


\begin{proof}[Proof of Theorem \ref{t.quantfor31}]
Fix $\beta' \in (2, \beta)$, so that $1 - \beta/2 < 1 - \beta'/2 < 0$. Let the level $\ell > 0$ be given, and introduce the increasing sequence of levels $\ell_R  = \ell - R^{1 - \beta'/2}$, which satisfy in particular, for every $c_1 > 0$,
\begin{equation}
\label{e.lr}
\ell_{2R + \sqrt{R}} - \ell_{R} \ge \ell_{2R} - \ell_R = (1 - 2^{1 - \beta'/2})R^{1-\beta'/2} \ge c_1 R^{1 - \beta/2} 
\end{equation}
eventually for large enough $R \ge 1$.

\medskip
Since $ \cross_\ell(2R,R)$ is increasing in $\ell$ and since $\ell_R < \ell$, defining
\[  a_R =  \Pro \left[ f  \notin \cross_{\ell_R}(2R,R) \right]   , \]
  it is sufficient to prove the existence of a $c_1 > 0$ such that, for sufficiently large $R \ge 1$,
\begin{equation} \label{e.for_2R_R}
 a_R  \le  e^{-c_1R} .
\end{equation}
We deduce \eqref{e.for_2R_R} from the following functional inequality for $a_R$, proved immediately below: There exists a $c_1 > 0$ such that, for sufficiently large $R \ge 1$,
\begin{equation}
\label{e.ineq1}
 a_{2R + \sqrt{R}} \le 49 a_R^2 + e^{-c_1R}  .
 \end{equation}
Recalling that Theorem \ref{t.phase_trans} implies that $a_R \to 0$, an application of Lemma \ref{l.aux1} then yields the existence of constants $c_2, c_3 > 0$ and a positive subsequence $(m_n)_{n \ge 1}$ such that, for all $n \ge 1$,
\[ 2^n \le m_n \le c_2 2^n \quad \text{and} \quad  a_{m_n} \le e^{-c_3 m_n} . \]
This implies~\eqref{e.for_2R_R} for $R \in \{ m_n \}_{n \geq 1}$, which can be extended to all $R \ge 0$ by standard gluing arguments.

\medskip 
To prove \eqref{e.ineq1} we introduce two `multiple crossing' events: 
\begin{itemize}
\item  $\multicross_\ell(R)$, which is the union of the following seven events: (i-iv) $\cross_\ell(2R, R)$, and copies of this event translated by $(R, 0)$, $(2R,0)$ and $(3R, 0)$, and (v-vii) $\cross_\ell(R, R)$ translated by $(R, 0)$ and rotated by $\pi/2$, and copies of this event translated by $(R, 0)$ and $(2R, 0)$. This event is depicted at the bottom of Figure~\ref{f.cross1}.
\item  $\multicross'_\ell(R)$, which is the event $\multicross_\ell(R)$ translated by $(0,R+\sqrt{R})$. This event is depicted at the top of Figure~\ref{f.cross1}.
\end{itemize}

\begin{figure}[!h]
\begin{center}
\hspace{3cm} \includegraphics[scale=0.72]{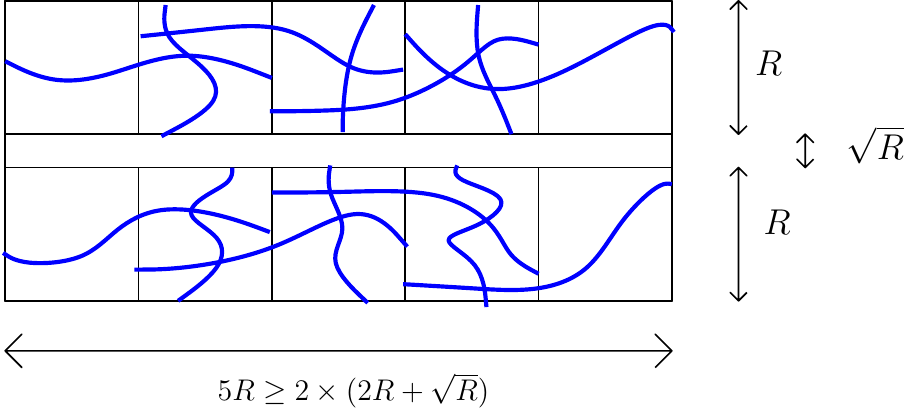}
\end{center}
\caption{The events $\multicross_\ell(R)$ (along the bottom) and $\multicross_\ell'(R)$ (along the top).}\label{f.cross1}
\end{figure}

We also introduce the scales
\[   b_R =  \Pro \left[ f \notin \multicross_{\ell_R}(R) \right] \]
and
\[  b_R' = \Pro \left[ f \notin \multicross_{\ell_{2R+\sqrt{R}}}(R) \cup  \multicross_{\ell_{2R+\sqrt{R}}}'(R) \right] . \]
Now observe the following three facts:
\begin{enumerate}
\item By stationarity and the union bound, $b_R \le 7 a_R$;
\item By stationarity and the `sprinkled' quasi-independence statement in Proposition \ref{p.sqi} (applicable in light of \eqref{e.lr}), there exists $c_1>0$ such that, for sufficiently large $R \ge 1$, 
\[   b_R' \le b_R^2 +  e^{-c_1 R } ; \]
\item For sufficiently large $R \ge 1$,
\[ \multicross_\ell(R) \cup \multicross'_\ell(R) \subseteq  \cross_\ell \left( 2(2R+\sqrt{R}),2R+\sqrt{R}) \right) , \]
(see Figure~\ref{f.cross1}) and so $a_{2R + \sqrt{R}} \le b_R'$. 
\end{enumerate}
Putting these together yields \eqref{e.ineq1}.
 \end{proof}

We deduce the remainder of our results from Theorem \ref{t.quantfor31}, namely Theorem~\ref{t.main1}, Theorem~\ref{t.main1a}, and the second and third statements of Theorem \ref{t.main2} (the first statement of this theorem is given by Theorem~\ref{t.rsw}).

\begin{proof}[Proof of the second and third statements of Theorem \ref{t.main2}]
The result in the case $\ell > 0$ is a consequence of Theorem \ref{t.quantfor31} by standard gluing techniques, and the result in the case $\ell < 0$ then follows since $f$ is equal to $-f$ in law. 
\end{proof}

\begin{proof}[Proof of Theorem~\ref{t.main1}]
The result in the case $\ell \le 0$ follows from the fact that $\Pro \left[ f \in \arm_0(1,R) \right]$ decays to $0$ as $R$ goes to $\infty$ (see the first statement of Theorem~\ref{t.main2}). The result in the case $\ell > 0$ is a consequence of the second statement of Theorem~\ref{t.main2} by standard gluing techniques (see \cite[Lemma 2.8]{rv_bf} for details). Indeed, we have
\[ \sum_{k \in \mathbb{N}} \Pro \left[ \neg \cross_\ell(2^{k+1},2^k) \right] < \infty , \]
and we conclude, by the Borel-Cantelli lemma, that there exists an unbounded connected component (with uniqueness following easily from these arguments as well).
\end{proof}

\begin{proof}[Proof of Theorem \ref{t.main1a}]
The result in the case $\eps = 0$ is immediate since, by Theorem \ref{t.main1}, $\mathcal{E}_0$ does not percolate. In the case $\eps > 0$, we first deduce that, for every quad $Q$, the probability of the crossing $RQ$ by the set $\mathcal{L}_0^\eps$ tends to one at the same rate (given in Theorem \ref{t.main2}) as for the event $\cross_{-\eps}(RQ)$. This is since a crossing of $RQ$ by $\mathcal{L}_0^\eps$ can only not occur if the complementary crossing event for the quad $RQ^\ast$ occurs for \textit{either} the set $\mathcal{E}_{-\eps}$ \textit{or} the set $\mathcal{E}^c_{\eps}$ (which have the same probability). As in the proof of Theorem \ref{t.main1}, standard gluing techniques then give the result (in particular, these gluing arguments do not rely on the FKG inequality in \eqref{e.fkg2}, which is important since $\mathcal{L}_0^\eps$ does not enjoy positive associations).
\end{proof} 

To finish the section, we prove the auxiliary lemma used in the proof of Theorem~\ref{t.quantfor31}:

\begin{proof}[Proof of Lemma \ref{l.aux1}]
Without loss of generality we assume that $c_1 > 1$. Define $a'_R = c_1 (a_R + e^{- (c_2/4) R})$; we prove the result for $a'_R$, which is sufficient since $a_R \le a_R'$.

\medskip
We claim that there exists an $R_1 \geq 2$ such that $a_{R_1}' < 1$ and, for each $R \ge R_1$,
\begin{equation} \label{e.aux1}
   a'_{2R + \sqrt{R}} \le (a'_R)^2 .
   \end{equation}
Such an $R_1$ exists since $a_R \to 0$ and since, for sufficiently large $R \ge 1$,
\begin{align*}
a'_{2R + \sqrt{R}} &= c_1 a_{2R + \sqrt{R}} + c_1e^{-(c_2/4) (2R + \sqrt{R})}  \\
& \le c_1^2 a_R^2 + c_1e^{-c_2R} + c_1e^{-(c_2/2)R} e^{- (c_2/4)\sqrt{R}} \\
& = (c_1 a_R)^2 + c_1( e^{-(c_2/4) R })^2 + c_1e^{-c_2R} -c_1 e^{-(c_2/2)R} (1 - e^{- (c_2/4)\sqrt{R}})  \\
& \le (c_1 a_R)^2 + c_1( e^{-(c_2/4) R })^2 \le (a_R')^2 .
\end{align*}
Now define $m_1= R_1$ and $m_{n+1}=2m_n+ \sqrt{m_n}$. By \eqref{e.aux1} we have, for each $n \ge 1$, $a'_{m_n} \leq (a'_{R_1})^{2^n}$. Moreover, we claim that there exists $c_5>0$ such that $2^n \leq m_n \leq c_5 2^n$. This is easily seen by first establishing the lower bound, and then noting that
\[ \log \Big( \frac{m_{n+1}}{2^{n+1}} \Big) - \log \Big( \frac{m_n}{2^n} \Big) = \log \Big(1 + \frac{1}{2 \sqrt{m_n}} \Big) \le \log \Big(1 + \frac{1}{2^{n/2}} \Big) \le \frac{1}{2^{n/2}} ,\]
which is summable over $n$. Combining these we have the result.
\end{proof}

\bigskip
\bibliographystyle{alpha}
\bibliography{sharpness}

\begin{thebibliography}{DCRT19b}

\bibitem[AB18]{ahlberg2017noise}
D.~Ahlberg and R.~Baldasso.
\newblock Noise sensitivity and {V}oronoi percolation.
\newblock {\em Electron. J. Probab.}, 23, 2018.

\bibitem[Ale96]{alex_96}
K.S. Alexander.
\newblock Boundedness of level lines for two-dimensional random fields.
\newblock {\em Ann. Probab.}, 24(4):1653--1674, 1996.

\bibitem[AT07]{adta_rfg}
R.J. Adler and J.E. Taylor.
\newblock {\em Random fields and geometry}.
\newblock Springer, 2007.

\bibitem[AW09]{azws}
J.~Aza{\"{\i}}s and M.~Wschebor.
\newblock {\em Level sets and extrema of random processes and fields}.
\newblock John Wiley \& Sons, Inc., Hoboken, NJ, 2009.

\bibitem[BDS07]{bds_07}
E.~Bogomolny, R.~Dubertrand, and C.~Schmit.
\newblock {SLE} description of the nodal lines of random wavefunctions.
\newblock {\em J. Phys. A: Math. Theor.}, 40:381--395, 2007.

\bibitem[BEL17]{bel_17}
E.~Di Bernardino, A.~Estrade, and J.R. Le\'{o}n.
\newblock A test of {G}aussianity based on the {E}uler characteristic of
  excursion sets.
\newblock {\em Electron. J. Statist.}, 11(1):843--890, 2017.

\bibitem[BG17]{bg_16}
V.~Beffara and D.~Gayet.
\newblock Percolation of random nodal lines.
\newblock {\em Publ. Math. IHES}, 126:131--176, 2017.

\bibitem[BKS99]{benjamini1999noise}
I.~Benjamini, G.~Kalai, and O.~Schramm.
\newblock Noise sensitivity of {B}oolean functions and applications to
  percolation.
\newblock {\em Publ. Math. IHES}, 90(1):5--43, 1999.

\bibitem[BM18]{bm_17}
D.~Beliaev and S.~Muirhead.
\newblock Discretisation schemes for level sets of planar {G}aussian fields.
\newblock {\em Commun. Math. Phys.}, 359:869--913, 2018.

\bibitem[BMW17]{bmw_17}
D.~Beliaev, S.~Muirhead, and I.~Wigman.
\newblock Russo-{S}eymour-{W}elsh estimates for the {K}ostlan ensemble of
  random polynomials.
\newblock {\em arXiv preprint arXiv:1709.08961}, 2017.

\bibitem[BR06]{bollobas2006percolation}
B.~Bollob{\'a}s and O.~Riordan.
\newblock {\em Percolation}.
\newblock Cambridge University Press, 2006.

\bibitem[BS98]{benjamini1998conformal}
I.~Benjamini and O.~Schramm.
\newblock Conformal invariance of {V}oronoi percolation.
\newblock {\em Commun. Math. Phys.}, 197(1):75--107, 1998.

\bibitem[BS07]{bs_07}
E.~Bogomolny and C.~Schmit.
\newblock Random wavefunctions and percolation.
\newblock {\em J. Phys. A: Math. Theor.}, 40:14033--14043, 2007.

\bibitem[BTA04]{bertho04}
A.~Berlinet and C.~Thomas-Agnan.
\newblock {\em Reproducing Kernel Hilbert Spaces in Probability and
  Statistics}.
\newblock Springer, 2004.

\bibitem[CN07]{camia2007critical}
F.~Camia and C.~M. Newman.
\newblock Critical percolation exploration path and {$SLE_6$}: a proof of
  convergence.
\newblock {\em Probab. Theory Relat. Fields}, 139(3-4):473--519, 2007.

\bibitem[Cuz76]{c_76}
J.~Cuzick.
\newblock A central limit theorem for the number of zeros of a stationary
  {G}aussian process.
\newblock {\em Ann. Probab.}, 4(4):547--556, 1976.

\bibitem[DCRT18]{duminil2018poisson}
H.~Duminil-Copin, A.~Raoufi, and V.~Tassion.
\newblock Subcritical phase of $d$-dimensional {P}oisson-{B}oolean percolation
  and its vacant set.
\newblock {\em arXiv preprint arXiv:1805.00695}, 2018.

\bibitem[DCRT19a]{duminil2017exponential}
H.~Duminil-Copin, A.~Raoufi, and V.~Tassion.
\newblock Exponential decay of connection probabilities for subcritical
  {V}oronoi percolation in {$\R^d$}.
\newblock {\em Probab. Theory Relat. Fields}, 173(1-2):479--490, 2019.

\bibitem[DCRT19b]{duminil2017sharp}
H.~Duminil-Copin, A.~Raoufi, and V.~Tassion.
\newblock Sharp phase transition for the random-cluster and {P}otts models via
  decision trees.
\newblock {\em Ann. Math.}, 189(1):75--99, 2019.

\bibitem[GG06]{graham2006influence}
B.T. Graham and G.R. Grimmett.
\newblock Influence and sharp-threshold theorems for monotonic measures.
\newblock {\em Ann. Probab.}, pages 1726--1745, 2006.

\bibitem[Gri99]{grimmett1999percolation}
G.R. Grimmett.
\newblock {\em Percolation}.
\newblock Springer: Berlin, Germany, 1999.

\bibitem[GS14]{garban2014noise}
C.~Garban and J.~Steif.
\newblock {\em Noise sensitivity of {B}oolean functions and percolation}.
\newblock Cambridge University Press, 2014.

\bibitem[Har60]{harris1960lower}
T.E. Harris.
\newblock A lower bound for the critical probability in a certain percolation
  process.
\newblock {\em Proc. Camb. Phil. Soc.}, 56:13--20, 1960.

\bibitem[Hig02]{h_02}
D.~Higdon.
\newblock Space and space-time modeling using process convolutions.
\newblock In C.W. Anderson, V.~Barnett, P.C. Chatwin, and A.H. El-Shaarawi,
  editors, {\em Quantitative Methods for Current Environmental Issues}. Spring,
  London, 2002.

\bibitem[Jan97]{jan_97}
S.~Janson.
\newblock {\em Gaussian {H}ilbert spaces}.
\newblock Cambridge University Press, 1997.

\bibitem[Kes80]{kesten1980critical}
H.~Kesten.
\newblock The critical probability of bond percolation on the square lattice
  equals {$1/2$}.
\newblock {\em Commun. Math. Phys.}, 74:41--59, 1980.

\bibitem[Kes87]{kesten1987scaling}
H.~Kesten.
\newblock Scaling relations for 2d-percolation.
\newblock {\em Commun. Math. Phys.}, 109(1):109--156, 1987.

\bibitem[KW70]{kimwah70}
G.S. Kimeldorf and G.~Wahba.
\newblock A correspondence between {B}ayesian estimation on stochastic
  processes and smoothing by splines.
\newblock {\em Ann. Math. Statist.}, 41(2):495--502, 1970.

\bibitem[Mal69]{m_69}
T.L. Malevich.
\newblock Asymptotic normality of the number of crossing of level zero by a
  {G}aussian process.
\newblock {\em Theory Probab. Appl.}, 14(2):287--295, 1969.

\bibitem[MS83a]{molchanov1983percolationi}
S.A. Molchanov and A.K. Stepanov.
\newblock Percolation in random fields. {I}.
\newblock {\em Theor. Math. Phys.}, 55(2):478--484, 1983.

\bibitem[MS83b]{molchanov1983percolationii}
S.A. Molchanov and A.K. Stepanov.
\newblock Percolation in random fields. {II}.
\newblock {\em Theor. Math. Phys.}, 55(3):592--599, 1983.

\bibitem[MS86]{molchanov1986percolationiii}
S.A. Molchanov and A.K. Stepanov.
\newblock Percolation in random fields. {III}.
\newblock {\em Theor. Math. Phys.}, 67(2):434--439, 1986.

\bibitem[NS11]{ns_2010}
F.~Nazarov and M.~Sodin.
\newblock Fluctuations in random complex zeroes: asymptotic normality
  revisited.
\newblock {\em Int. Math. Res. Not.}, 2011(24):720--5759, 2011.

\bibitem[NS16]{nazarov2015asymptotic}
F.~Nazarov and M.~Sodin.
\newblock Asymptotic laws for the spatial distribution and the number of
  connected components of zero sets of {G}aussian random functions.
\newblock {\em J. Math. Phys. Anal. Geo.}, 12(3):205--278, 2016.

\bibitem[NSV07]{nsv_2007}
F.~Nazarov, M.~Sodin, and A.~Volberg.
\newblock Transportation to random zeroes by the gradient flow.
\newblock {\em Geom. Funct. Anal.}, 17(3):887--935, 2007.

\bibitem[NSV08]{nsv_2008}
F.~Nazarov, M.~Sodin, and A.~Volberg.
\newblock The {J}ancovici--{L}ebowitz--{M}anificat law for large fluctuations
  of random complex zeroes.
\newblock {\em Commun. Math. Phys.}, 284(3):833--865, 2008.

\bibitem[OSSS05]{o2005every}
R.~O'Donnell, M.~Saks, O.~Schramm, and R.A. Servedio.
\newblock Every decision tree has an influential variable.
\newblock In {\em 46th Annual IEEE Symposium on Foundations of Computer Science
  (FOCS'05)}, pages 31--39, 2005.

\bibitem[Pit82]{pitt1982positively}
L.D. Pitt.
\newblock Positively correlated normal variables are associated.
\newblock {\em Ann. Probab.}, 10(2):496--499, 1982.

\bibitem[Pou99]{poularikas99}
A.D. Poularikas.
\newblock {\em The Handbook of Formulas and Tables for Signal Processing}.
\newblock CRC Press, 1999.

\bibitem[Rod17]{rodriguez20150}
P.F. Rodriguez.
\newblock A 0-1 law for the massive {G}aussian free field.
\newblock {\em Probab. Theory Relat. Fields}, 169:901--930, 2017.

\bibitem[RV19a]{rv_bf}
A.~Rivera and H.~Vanneuville.
\newblock The critical threshold for {B}argmann-{F}ock percolation.
\newblock {\em arXiv preprint arXiv:1711.05012, to appear in Ann. H. Lebesgue},
  2019.

\bibitem[RV19b]{rv_rsw}
A.~Rivera and H.~Vanneuville.
\newblock Quasi-independence for nodal lines.
\newblock {\em arXiv preprint arXiv:1711.05009, to appear in Ann. Inst. H.
  Poincaré Probab. Statist.}, 2019.

\bibitem[RW06]{rw_06}
C.E. Rasmussen and C.K.I. Williams.
\newblock {\em Gaussian processes for machine learning}.
\newblock MIT Press, 2006.

\bibitem[She09]{sheffield99}
S.~Sheffield.
\newblock Exploration trees and conformal loop ensembles.
\newblock {\em Duke Math. J.}, 147(1):79--129, 2009.

\bibitem[Smi07]{smirnov2007towards}
S.~Smirnov.
\newblock Towards conformal invariance of {$2d$} lattice models.
\newblock {\em Proceedings of the ICM}, 2007.

\bibitem[SS09]{ss_99}
O.~Schramm and S.~Sheffield.
\newblock Contour lines of the two-dimensional discrete {G}aussian free field.
\newblock {\em Acta Math.}, 202(1):21--137, 2009.

\bibitem[SS10]{schramm2010quantitative}
O.~Schramm and J.E. Steif.
\newblock Quantitative noise sensitivity and exceptional times for percolation.
\newblock {\em Ann. Math.}, 171(2):619--672, 2010.

\bibitem[SW01]{Smirnov2001critical}
S.~Smirnov and W.~Werner.
\newblock Critical exponents for two-dimensional percolation.
\newblock {\em Math. Res. Lett.}, 8(5-6):729--744, 2001.

\bibitem[Tas16]{tassion2014crossing}
V.~Tassion.
\newblock Crossing probabilities for {V}oronoi percolation.
\newblock {\em Ann. Probab.}, 44(5):3385--3398, 2016.

\bibitem[Wei82]{w_82}
A.~Weinrib.
\newblock Percolation threshold of a two-dimensional continuum system.
\newblock {\em Phys. Rev. B}, 26(3):1352--1361, 1982.

\bibitem[Wei84]{weinrib1984long}
A.~Weinrib.
\newblock Long-range correlated percolation.
\newblock {\em Phys. Rev. B}, 29(1):387, 1984.

\bibitem[Wen05]{wendland05}
H.~Wendland.
\newblock {\em Scattered Data Approximation}.
\newblock Cambridge Monographs on Applied and Computational Mathematics.
  Cambridge University Press, 2005.

\end{thebibliography}

\end{document}